\def\Z{{\mathbb Z}}
\def\irr{{\rm irr}}
\def\GL{{\rm GL}}
\def\SL{{\rm SL}}
\def\P{{\mathbb P}}
\def\Disc{{\rm Disc}}
\def\Aut{{\rm Aut}}
\def\Vol{{\rm Vol}}
\def\R{{\mathbb R}}
\def\F{{\mathbb F}}
\def\FF{{\mathcal F}}
\def\RR{{\mathcal R}}
\def\Q{{\mathbb Q}}
\def\H{{\mathcal H}}
\def\P{{\mathbb P}}
\def\FF{\mathcal{F}}
\def\C{\mathbb{C}}
\def\Gal{{\rm Gal}}
\def\Disc{{\rm Disc}}
\def\O{{\mathcal O}}
\def\CC{{\mathcal C}}
\def\Vol{{\rm Vol}}
\newtheorem{theorem}{Theorem}
\newtheorem{lemma}[theorem]{Lemma}
\newtheorem{proposition}[theorem]{Proposition}
\newcommand{\ritem}[1]{\item[{\rm #1}]}
\newenvironment{proof}{\noindent {\bf Proof:}}{$\Box$ \vspace{2 ex}}
\title{The density of discriminants of quintic rings and fields} 
\author{Manjul Bhargava} 
\begin{document}
\maketitle

\section{Introduction}

Let $N_n(X)$ denote the number of isomorphism classes of
number fields of degree $n$ having
absolute discriminant at most $X$.  Then it is an old folk conjecture
that the limit 
\vspace{.03in}
\begin{equation}\label{basiclimit}
c_n = \lim_{X\rightarrow\infty} \frac{N_n(X)}{X} \vspace{.035in}
\end{equation}
exists and is positive for $n>1$.  The conjecture is trivial for
$n\leq 2$, while for $n=3$ and $n=4$ it is a theorem of
Davenport and Heilbronn~\cite{DH} and of the author~\cite{Bhargava5},
respectively.  In degrees $n\geq 5$, where number fields tend to be
predominantly nonsolvable, the conjecture has not previously been
known to be true for any value of $n$.

The primary purpose of this article is to prove the above
conjecture for $n=5$.  In particular, we are able to determine the
constant $c_5$ explicitly.  More precisely, we prove:

\vspace{.015in}


\begin{theorem}
  Let $N_5^{(i)}(\xi,\eta)$ denote the number of quintic fields $K$,
  up to isomorphism,
  having $5-2i$ real embeddings and satisfying $\xi<\Disc(K)<\eta$.  Then
\[
\begin{array}{rlcl}\label{dodqf}
\rm{(a)}& \displaystyle{\lim_{X\rightarrow\infty} \frac{N_5^{(0)}(0,X)}{X}}
   &=&\!\! \displaystyle{\frac{1}{240}\prod_p (1+p^{-2}-p^{-4}-p^{-5})}; \\
\rm{(b)}& \displaystyle{\lim_{X\rightarrow\infty}
   \frac{N_5^{(1)}(-X,0)}{X}} &=&\!\!\!\;\;\displaystyle{\frac{1}{24}\,\prod_p
   (1+p^{-2}-p^{-4}-p^{-5})}; \\
\rm{(c)}&\displaystyle{\lim_{X\rightarrow\infty} \frac{N_5^{(2)}(0,X)}{X}} 
   &=& \!\!\!\;\;\displaystyle{\frac{1}{16}\,\prod_p (1+p^{-2}-p^{-4}-p^{-5})}.
\end{array}
\]
\vspace{-.1in}
\end{theorem}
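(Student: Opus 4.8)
The plan is to follow the orbit-counting paradigm that succeeded for $n=3$ and $n=4$: first realize quintic rings as integral orbits of an algebraic group acting on a lattice, then count those orbits by the geometry of numbers. The key input (established in the companion work on higher composition laws for quintic rings) is a discriminant-preserving bijection between isomorphism classes of quintic rings and the orbits of $G(\Z) = \GL_4(\Z) \times \SL_5(\Z)$ on the lattice $V(\Z) = \Z^4 \otimes \wedge^2 \Z^5$ of quadruples of $5\times 5$ skew-symmetric integer matrices. Note $\dim \GL_4 + \dim \SL_5 = 16 + 24 = 40 = \dim V$, so this is a prehomogeneous space with a single fundamental relative invariant $\Disc(v)$ of degree $40$, and the bijection sends the quintic ring's discriminant to $\Disc(v)$. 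Under this dictionary, quintic \emph{fields} correspond exactly to the irreducible, maximal orbits, and the signature $5-2i$ is read off from the sign of $\Disc(v)$ together with which of the finitely many $G(\R)$-components contains $v$.

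First I would count all $G(\Z)$-orbits on points of $V(\Z)$ with $0 < \pm\Disc(v) < X$ lying in a fixed real component, temporarily dropping the maximality and irreducibility constraints. Fixing a fundamental domain $\FF$ for $G(\Z)$ on the relevant component of $V(\R)$, this is a lattice-point count in $\FF \cap \{|\Disc| < X\}$. Since $\FF$ is noncompact I would not count in $\FF$ directly but instead average the counting function over a compact set in $G(\R)$, thereby thickening the domain so that Davenport's lemma applies slice by slice; the main term then emerges as $\Vol(\FF \cap \{|\Disc|<X\})$ divided by the order of the generic stabilizer. This volume scales linearly in $X$, producing the leading constant.

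The hard part will be the cusp analysis. The domain $\FF$ has unbounded cuspidal regions, and I must show that the irreducible integral points there contribute only $o(X)$, so that the asymptotic count of fields is governed by the main body of $\FF$ alone. The crucial structural fact to establish is that, up to a lower-order error, every integral point deep in the cusp is reducible, i.e. corresponds to a quintic ring that is not an integral domain; quantifying this requires bounding, region by region, how many lattice points can occur once certain coordinates are forced to be large, and then summing these bounds to confirm they total $o(X)$. This is the delicate heart of the argument and the step most likely to resist any naive treatment.

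Finally I would sieve from all quintic rings down to the maximal ones. For each prime $p$, a local computation over $\Z_p$ gives the density $\beta_p$ of points of $V(\Z_p)$ that are maximal at $p$; these densities come out to $\beta_p = 1 + p^{-2} - p^{-4} - p^{-5}$, and their product is precisely the Euler factor in the statement. To interchange this infinite sieve with the limit in $X$ one needs a uniformity estimate bounding the number of quintic rings nonmaximal at a prime $p$, uniformly in $p$; such a tail bound legitimizes the interchange. Assembling the archimedean volume, the number of real components for each signature, and the generic stabilizer order---which together produce the rational prefactors $\frac{1}{240}$, $\frac{1}{24}$, $\frac{1}{16}$ in cases (a), (b), (c)---with the Euler product then yields the three stated densities.
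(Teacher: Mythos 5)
Your proposal follows essentially the same route as the paper: the parametrization of quintic rings via $G_\Z$-orbits on $\Z^4\otimes\wedge^2\Z^5$, averaging over a compact family of fundamental domains so that Davenport's lemma applies, showing that cuspidal lattice points are either reducible or negligible in number, and then sieving to maximal rings using a tail bound of the form $N(\mathcal W_p;X)=O(X/p^2)$ uniform in $p$. The one imprecision is that the integral orbits parametrize pairs $(R,R')$ consisting of a quintic ring together with a sextic resolvent ring, not quintic rings alone; this is harmless for the final constants because a maximal quintic ring has exactly one sextic resolvent, but it does mean the uniformity estimate must also control the number of resolvents possessed by nonmaximal rings (which the paper does via the content of the ring and Brakenhoff's subring counts).
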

The constants appearing in Theorem~1 (and thus their sum, $c_5
=\frac{13}{120}\,\prod_p (1+p^{-2}-p^{-4}-p^{-5})$) turn out to have
very natural interpretations.
Indeed, the constant $c_5$ takes the form of an Euler
product, where the Euler factor at a place $\nu$ ``counts'' the total number
of local \'etale quintic extensions 
of $\Q_\nu$, where
each isomorphism class of local extension $K_\nu$ is counted with a certain natural weight
to reflect the probability that a quintic number field $K$ has
localization $K\otimes \Q_\nu$ isomorphic to $K_\nu$ at $\nu$.
More precisely, let \vspace{.1in}
\begin{equation}\label{binfdef}
\beta_\infty = \frac{1}{2}\sum_{[K_\infty:\R]=5 \mbox{\small \,\,\'etale}}
\frac{1}{|\Aut_\R(K_\infty)|},
\end{equation}
where the sum is over all isomorphism classes $K_\infty$ 
of \'etale extensions of
$\R$ of degree 5.  Since $\Aut_\R(\R^5)=120$, 
$\Aut_\R(\R^3\oplus\C)=12$, and $\Aut_\R(\R\oplus\C^2)=8$, we have
$\beta_\infty = \frac{1}{240}+\frac1{24}+\frac1{16}=\frac{13}{120}$.  
Similarly, for each prime $p$, let
\begin{equation}\label{bpdef}
\beta_p = \frac{p-1}{p}\sum_{[K_p:\Q_p]=5 \mbox{\small \,\,\'etale}}
\frac{1}{|\Aut_{\Q_p}(K_p)|}\cdot\frac{1}{\Disc_p(K_p)},
\end{equation}
where the sum is over all isomorphism classes $K_p$ of \'etale extensions of
$\Q_p$ of degree 5, and $\Disc_p(K_p)$ denotes the discriminant of
$K_p$ viewed as a power of $p$.  Then
\begin{equation}
c_5 = \beta_\infty\cdot\prod_p \beta_p,
\end{equation}
since we will show that \begin{equation}\label{bpformula}
\beta_p=1+p^{-2}-p^{-4}-p^{-5}.\end{equation}  Thus we
obtain a natural interpretation of $c_5$ as a product of counts of
local field extensions.  
For more details on the evaluation of local sums of the form
(\ref{bpdef}),
and for global heuristics on the expected values of the asymptotic constants
associated to general 
degree $n$\, $S_n$-number fields,
see~\cite{Bhargava6}.

We obtain several additional results as by-products.
First, our methods enable us to analogously
count all {\it orders} in quintic fields:

\begin{theorem}
Let $M_5^{(i)}(\xi,\eta)$ denote the number of isomorphism classes of orders $\O$
in quintic fields having $5-2i$ real embeddings
and satisfying $\xi<\Disc(\O)<\eta$.  Then there exists a positive 
constant $\alpha$ such that
\[
\begin{array}{rlcc}\label{dodqr}
\rm{(a)}& \displaystyle{\lim_{X\rightarrow\infty} \frac{M_5^{(0)}(0,X)}{X}}
   &=&\!\! \displaystyle{\frac{\alpha}{240}}; \\
\rm{(b)}& \displaystyle{\lim_{X\rightarrow\infty}
   \frac{M_5^{(1)}(-X,0)}{X}} &=&\!\!
   \displaystyle{\frac{\alpha}{24}}; \\
\rm{(c)}&\displaystyle{\lim_{X\rightarrow\infty} \frac{M_5^{(2)}(0,X)}{X}} 
   &=& \!\!\displaystyle{\frac{\alpha}{16}}.
\end{array}
\]
\end{theorem}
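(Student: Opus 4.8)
The plan is to deduce Theorem~2 from Bhargava's parametrization of quintic rings together with a geometry-of-numbers count of the associated integral orbits. The input is the correspondence, established in the body of the paper, between isomorphism classes of quintic rings and $\GL_4(\Z)\times\GL_5(\Z)$-orbits on the lattice $V_\Z=\Z^4\otimes\wedge^2\Z^5$ of quadruples of $5\times5$ alternating integer matrices; under this correspondence the discriminant of the ring equals (up to sign) the basic relative invariant $\Disc$ of the representation, a polynomial homogeneous of degree $40$, and the automorphism group of the ring equals the stabilizer of the orbit. An order $\O$ in a quintic field corresponds exactly to an \emph{irreducible} orbit, meaning one for which $\O\otimes\Q$ is a field; orbits whose associated \'etale algebra splits as a nontrivial product give rings that are not domains and must be discarded. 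Since $\dim V=40$ while $\dim(\GL_4\times\SL_5)=16+24=40$, the subgroup $G=\GL_4\times\SL_5$ acts with finite kernel and finite generic stabilizer, and it is this $G$ that I would use for the count.

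First I would reduce each $M_5^{(i)}$ to a weighted count of irreducible $G(\Z)$-orbits on $V_\Z$ of bounded discriminant and prescribed real type, each orbit weighted by $1/|\Stab|$ in keeping with the automorphism weights. The signature $5-2i$ is read off from which connected component of $V(\R)\setminus\{\Disc=0\}$ the orbit representative lies in: $G(\R)$ has exactly three open orbits there, corresponding to the real \'etale quintic algebras $\R^5$, $\R^3\oplus\C$, and $\R\oplus\C^2$, and these carry generic real stabilizers of orders $240$, $24$, and $16$, i.e.\ twice the respective $|\Aut_\R(K_\infty)|$. This is precisely the source of the denominators appearing in the statement.

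Next I would perform the orbit count by Davenport's averaging method: the weighted number of irreducible $G(\Z)$-orbits with $|\Disc|<X$ in the $i$-th real component is computed by averaging the lattice-point count of $V_\Z$ over a fundamental domain $\FF$ for $G(\Z)\backslash G(\R)$, and then applying the principle that the number of lattice points in a homogeneously expanding, sufficiently regular region equals its volume up to lower-order terms. Because $\Disc$ is homogeneous of degree $40$ and $V_\Z$ has rank $40$, this volume grows linearly in $X$. Since $G(\R)$ acts with a single open orbit on each real component, the three signature contributions differ only through the stabilizer orders $240$, $24$, $16$ computed above; factoring these out leaves a common constant $\alpha$ and produces the asserted limits $\alpha/240$, $\alpha/24$, $\alpha/16$. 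Unlike in Theorem~1, no sieve to maximal orders is needed here, so $\alpha$ --- the product of the archimedean volume with the total $p$-adic mass --- need only be shown finite and positive, rather than evaluated explicitly.

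The main obstacle is the cusp analysis. The domain $\FF$ is non-compact, and the lattice-point estimate is valid only away from its cusps; in the cuspidal regions the number of points of $V_\Z$ can rival or exceed the main term. The crux is therefore to prove that the points lying deep in the cusp are overwhelmingly \emph{reducible}, so that the irreducible points they contribute number $o(X)$ and do not perturb the leading asymptotic. Concretely I would slice the cusp according to the sizes of the coordinates of a reduced representative, show that points whose ``leading'' coordinates are too small are forced to have a factoring invariant and hence correspond to non-domains, and bound the surviving irreducible points slice by slice. Carrying out these estimates for the $40$-dimensional representation, with its intertwined $\GL_4$ and $\SL_5$ scalings, is the technical heart of the argument and the step I expect to be hardest.
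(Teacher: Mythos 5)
Your overall skeleton (the orbit parametrization, averaging over a compact family of fundamental domains, and showing that irreducible points in the cusps are negligible) matches the machinery of Section~2, but there is a genuine gap at the very first step, and it is precisely the gap that separates Theorem~2 from the easier Theorem~7. The bijection of Theorem~5 is \emph{not} between quintic rings and $G_\Z$-orbits on $V_\Z$; it is between $G_\Z$-orbits and isomorphism classes of \emph{pairs} $(R,R')$, where $R'$ is a sextic resolvent ring of $R$. Consequently the weighted orbit count you describe enumerates each order $\O$ with multiplicity equal to its number of sextic resolvents. That multiplicity is $1$ for maximal rings but can be large for non-maximal ones (for a ring of content $n$ it is as large as $O(n^6)$), so your argument as written proves the analogue of Theorem~7 (the count of pairs, with constant $\zeta(2)^2\zeta(3)^2\zeta(4)^2\zeta(5)$), not Theorem~2. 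Relatedly, your claim that the real stabilizers have orders $240$, $24$, $16$ is off by a factor of $2$: the stabilizers are $S_5$, $S_3\times C_2$, $D_4$ of orders $120$, $12$, $8$, and the extra $2$ in the denominators comes out of the Jacobian/volume computation, not the stabilizer.

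Your explicit assertion that ``no sieve is needed'' is therefore wrong. To count orders rather than pairs, the paper chooses one sextic resolvent $R'$ for each isomorphism class of quintic ring $R$ and lets $S\subset V_\Z$ be the set of elements yielding one of the chosen pairs; this $S$ is cut out by local conditions at \emph{every} prime, with local densities $\mu_p(S)$ given by $|G(\F_p)|/(\Disc_p(R)\,|\Aut_{\Z_p}(R)|)$ summed over local isomorphism classes, whose product over $p$ is the constant $\alpha=\prod_p\alpha_p$. Passing from finitely many to infinitely many such conditions requires exactly the same sieve as in Theorem~1, resting on the uniform tail estimate $N(\mathcal{W}_p;X)=O(X/p^2)$ --- which itself needs Brakenhoff's bound $O(p^{\min\{2k-2,\frac{20}{11}k\}})$ on the number of index-$p^k$ subrings of a maximal quintic ring together with the $O(n^6)$ bound on resolvents of a content-$n$ ring. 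Without this selection-plus-sieve step your argument does not yield the statement as posed.
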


The constant $\alpha$ in Theorem~2 has an analogous interpretation.  Let
$\alpha_p$ denote the analogue of the sum (\ref{bpdef}) for orders, i.e.,
\begin{equation}\label{gpdef}
\alpha_p = \frac{p-1}{p}\sum_{[R_p:\Z_p]=5}
\frac{1}{|\Aut_{\Z_p}(R_p)|}\cdot\frac{1}{\Disc_p(R_p)},
\end{equation}
where the sum is over all isomorphism classes of $\Z_p$-algebras $R_p$
of rank 5 over $\Z_p$ with nonzero discriminant.  Then we will show that
the constant $\alpha$ appearing in Theorem~2 is given by
\begin{equation}\label{rdef}
\alpha=\prod_p\alpha_p,
\end{equation}
thus expressing $\alpha$ 
as a product of counts of local ring extensions.  It
is an interesting combinatorial problem to explicitly evaluate
$\alpha_p$ in ``closed form'', analogous to the formula
(\ref{bpformula}) that we obtain for $\beta_p$;
see~\cite{Bhargava7} for some further discussion on the evaluation of
such sums.

Second, we note that the proof of Theorem~1 contains a determination of the
densities of the various splitting types of primes in $S_5$-quintic
fields.  If $K$ is an $S_5$-quintic field and $K_{120}$ denotes the
Galois closure of $K$, then the Artin symbol $(K_{120}/p)$ is defined
as a conjugacy class in $S_5$, its values being $\langle e \rangle$,
$\langle (12) \rangle$, $\langle (123) \rangle$, $\langle (1234)
\rangle$, $\langle (12345) \rangle$, $\langle (12)(34) \rangle$, or
$\langle (12)(345)\rangle$, where $\langle x\rangle$ denotes the
conjugacy class of $x$ in $S_5$.  It follows from the Cebotarev
density theorem that for fixed $K$ and varying $p$ (unramified in
$K$), the values $\langle e \rangle$, $\langle (12) \rangle$, $\langle
(123) \rangle$, $\langle (1234) \rangle$, $\langle (12345) \rangle$,
$\langle (12)(34) \rangle$, or $\langle (12)(345)\rangle$ occur with
relative frequency 1\,:\,10\,:\,20\,:\,30\,:\,24\,:\,15\,:\,20 (i.e.,
proportional to the size of the respective conjugacy class).  We
prove the following complement to Cebatorev density:

\begin{theorem}
Let $p$ be a fixed prime, and let $K$ run through all $S_5$-quintic
fields in which $p$ does not ramify, the fields being ordered by the size
of the discriminants.  Then the Artin symbol $(K_{120}/p)$ takes the values
$\langle e \rangle$,
$\langle (12) \rangle$, $\langle (123) \rangle$, $\langle (1234)
\rangle$, $\langle (12345) \rangle$, $\langle (12)(34) \rangle$, 
or $\langle (12)(345)\rangle$
with relative frequency 
$1\!:\!10\!:\!20\!:\!30\!:\!24\!:\!15\!:\!20$.  
\end{theorem}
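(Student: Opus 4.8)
The plan is to recognize the Artin symbol condition as a purely local condition at $p$ and then read off the relevant local densities directly from the proof of Theorem~1. When $p$ is unramified in $K$, the Frobenius class $(K_{120}/p)$ is determined by the factorization type of $p$ in $K$: if $p\O_K=\mathfrak{p}_1\cdots\mathfrak{p}_g$ with residue degrees $f_1,\dots,f_g$, then $(K_{120}/p)$ is the conjugacy class in $S_5$ of cycle type $(f_1,\dots,f_g)$. Equivalently, the symbol is determined by the isomorphism class of the unramified \'etale algebra $K_p:=K\otimes\Q_p\cong\prod_{i=1}^g\Q_{p^{f_i}}$. Thus the seven possible values of $(K_{120}/p)$ correspond bijectively to the seven unramified \'etale $\Q_p$-algebras of degree $5$, indexed by the partitions $1^5$, $2\,1^3$, $3\,1^2$, $4\,1$, $5$, $2^2\,1$, $3\,2$ of $5$, and Theorem~3 amounts to computing the relative density of quintic fields $K$ with each prescribed value of $K_p$.

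First I would impose this local condition within the count of Theorem~1. That proof counts rings of integers of quintic fields through the parametrization of quintic rings by orbits of $\GL_4(\Z)\times\SL_5(\Z)$ on $\Z^4\otimes\wedge^2\Z^5$, producing the asymptotic as a product of local densities whose factor at $p$ is $\beta_p=\frac{p-1}{p}\sum_{K_p}\frac{1}{|\Aut_{\Q_p}(K_p)|}\cdot\frac{1}{\Disc_p(K_p)}$, the sum running over all \'etale quintic $\Q_p$-algebras. By construction each summand is the local density contribution of those fields with $K\otimes\Q_p\cong K_p$; freezing the behavior at $p$ to a single isomorphism class $K_p$ selects the corresponding term. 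Restricting further to $K$ unramified at $p$ retains exactly the seven summands with $K_p$ unramified, for which $\Disc_p(K_p)=1$, so the $p$-local density attached to cycle type $\lambda$ is $\frac{p-1}{p}\cdot\frac{1}{|\Aut_{\Q_p}(K_p)|}$. Since all other factors ($\beta_\infty$, the $\beta_q$ for $q\neq p$, and the overall volume constant) are shared across the seven types, the relative frequency of cycle type $\lambda$ is exactly $1/|\Aut_{\Q_p}(K_p)|$. Here one also uses that non-$S_5$ quintic fields, and fields ramified at $p$, are excluded and in any case contribute a negligible proportion of the total, as already established in the proof of Theorem~1.

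It then remains to evaluate the seven weights $1/|\Aut_{\Q_p}(K_p)|$. For an unramified \'etale algebra $K_p=\prod_i\Q_{p^i}^{m_i}$ one has $|\Aut_{\Q_p}(K_p)|=\prod_i i^{m_i}\,m_i!$, because $\Aut_{\Q_p}(\Q_{p^i})=\Gal(\Q_{p^i}/\Q_p)$ is cyclic of order $i$ and the identical factors may also be permuted. This is precisely the order $z_\lambda$ of the centralizer in $S_5$ of a permutation of cycle type $\lambda$, so $1/|\Aut_{\Q_p}(K_p)|=1/z_\lambda$ is proportional to the conjugacy class size $|S_5|/z_\lambda$. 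Computing $z_\lambda$ for the seven types gives $120,12,6,4,5,8,6$ and hence class sizes $1,10,20,30,24,15,20$, yielding the asserted ratio $1\!:\!10\!:\!20\!:\!30\!:\!24\!:\!15\!:\!20$.

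The genuine analytic content — that freezing the local condition at the fixed prime $p$ still yields a clean asymptotic of the form $cX$ with the stated constant — is supplied by the machinery already built for Theorem~1, since the prescribed unramified splitting type is cut out by congruences modulo $p$ and the uniformity needed for the sieve has already been established there. Thus the principal task in proving Theorem~3 is bookkeeping: verifying that the summand of $\beta_p$ indexed by $K_p$ is exactly the local density of fields with $K\otimes\Q_p\cong K_p$, and then carrying out the elementary centralizer computation above. No new uniform estimates over varying $p$ are required, as $p$ is fixed throughout; the main obstacle is therefore conceptual rather than technical, namely the clean identification of each local summand with a single Frobenius class.
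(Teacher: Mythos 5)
Your proposal is correct and follows essentially the same route as the paper: identify the Frobenius class with the splitting type of $p$ in the maximal order, impose that splitting type as a (finite) congruence condition at $p$, run the same sieve as in Theorem~1 with the factor at $p$ replaced by the density of the corresponding subset, and read off the ratio of local densities. The only cosmetic difference is that you derive the ratio from the centralizer orders $|\Aut_{\Q_p}(K_p)|=z_\lambda$, whereas the paper cites Lemma~20 of \cite{Bhargava4} for the $p$-adic densities $\mu_p(U_p(\sigma))$ directly — these are the same computation in different clothing.
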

Actually, we do a little more: we determine for each prime $p$ the
density of $S_5$-quintic fields $K$ in which $p$ has the various
possible ramification types.  For example, it follows from our
methods that
 a proportion of precisely
$\frac{(p+1)(p^2+p+1)}{p^4+p^3+2p^2+2p+1}$ of $S_5$-quintic fields are
ramified at $p$.

\vspace{.03in}

Lastly, our proof of Theorem 1 implies that nearly all---i.e., a
density of 100\% of---quintic fields have full Galois group $S_5$.
This is in stark contrast to the quartic case~\cite[Theorem
3]{Bhargava5}, where we showed that only about 91\% of quartic fields
have associated Galois group $S_4$:

\begin{theorem}
When ordered by absolute discriminant, a density of $100\%$
of quintic fields have associated Galois group $S_5$.  
\end{theorem}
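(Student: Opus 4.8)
The plan is to classify the possible Galois groups and show that every non-$S_5$ type contributes only $o(X)$ fields, against the linear main term $N_5(X)\sim c_5 X$ furnished by Theorem~1. A quintic field $K$ corresponds to a transitive subgroup $G\subseteq S_5$, namely the image of $\Gal(K_{120}/\Q)$ in its action on the five embeddings of $K$; the transitive subgroups of $S_5$ are exactly $S_5$, $A_5$, $F_{20}$, $D_5$, and $C_5$. I would first record the group-theoretic reduction that every \emph{proper} transitive subgroup is contained in $A_5$ or in $F_{20}$: indeed $A_5\cap F_{20}=D_5$ and $C_5\subset D_5$, so $C_5,D_5\subseteq A_5$ while $F_{20}$ sits in itself. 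Translating into arithmetic, a quintic field is non-$S_5$ \emph{if and only if} either $\Disc(K)$ is a perfect square (which happens exactly when $G\subseteq A_5$) or its sextic resolvent has a rational root, i.e. the field is fixed by one of the six Sylow-$5$-subgroups (exactly when $G\subseteq F_{20}$). Thus it suffices to bound two families: quintic fields of square discriminant, and quintic fields with reducible sextic resolvent.

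Next I would dispose of the solvable families by class field theory, which handles the reducible-resolvent case and all of $A_5$'s solvable subgroups. For $G=C_5$ the conductor--discriminant formula gives $\Disc(K)=\mathfrak{f}^4$, so the count is $O(X^{1/4})$. For $G=D_5$ and $G=F_{20}$ the Galois closure is built over a quadratic, respectively cyclic quartic, resolvent field; parametrizing these extensions through $5$-torsion in ray class groups and invoking conductor--discriminant to extract a power saving bounds each family by $O(X^{1/2+\epsilon})$. This settles every solvable $G$, and in particular every field with reducible sextic resolvent.

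The main obstacle is $A_5$, which is nonsolvable and hence inaccessible to class field theory; here the square-discriminant characterization is essential. I would bound this family using the geometry-of-numbers framework already developed for Theorem~1, in which quintic rings are counted via integral orbits in the relevant representation and $\Disc$ appears as a polynomial invariant. The orbits whose associated quintic ring has Galois group contained in $A_5$ are precisely those on which the square-root obstruction to $\Disc$ being a square vanishes; these lie on a proper closed subset of the representation, and counting lattice points of bounded height on such a locus — exactly the type of estimate used to discard reducible and non-maximal orbits in the proof of Theorem~1 — yields a bound of $o(X)$ (in fact $O(X^{1-\delta})$). Summing the $C_5$, $D_5$, $F_{20}$, and $A_5$ contributions, all $o(X)$, against $N_5(X)\sim c_5 X$ gives density $100\%$ for $S_5$. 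I expect the $A_5$ estimate to be the crux: unlike the solvable cases it cannot be read off from conductors, and its treatment is precisely what distinguishes the quintic situation from the quartic one, where the transposition-containing group $D_4$ has a genuinely positive density.
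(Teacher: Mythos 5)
Your reduction to the transitive subgroups $A_5$, $F_{20}$, $D_5$, $C_5$ is correct, and the solvable families can indeed be disposed of by class field theory (though note the trivial bound on $5$-torsion of class groups only yields $O(X^{3/4+\epsilon})$ for the $D_5$ count rather than your claimed $O(X^{1/2+\epsilon})$; any $o(X)$ bound suffices, so this is cosmetic). The genuine gap is your treatment of $A_5$. The set of $(A,B,C,D)\in V_\Z$ whose discriminant is a perfect square is \emph{not} contained in a proper Zariski-closed subset of $V$: the degree-40 invariant $\Disc$ is not the square of a polynomial on $V$, so ``$\Disc(v)$ is a square integer'' is an arithmetic condition on the value, not an algebraic condition on the point, and the locus it defines is Zariski-dense. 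The lattice-point estimates used in the proof of Theorem~\ref{cna} to discard cuspidal and reducible points (Lemmas~\ref{lem1}--\ref{hard}) all concern genuinely algebraic loci --- vanishing of coordinates, rank conditions, factorization of the sub-Pfaffians --- and give no purchase on a Zariski-dense arithmetic locus. Salvaging the square-discriminant idea would require something like an $O(D^{\epsilon})$ bound on the number of quintic fields of fixed discriminant $D$, which is not available; so the step you yourself identify as the crux is the one that fails.

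The paper avoids the dichotomy entirely and treats all non-$S_5$ fields, including the $A_5$ ones, by a single local argument (Lemma~\ref{3reducible2}, proved in Section~3.2): if $R$ is a quintic ring that is not an order in an $S_5$-quintic field, then either no prime has splitting type $(1112)$ in $R$ or no prime has splitting type $(5)$ in $R$, since type $(5)$ forces $R$ to be a domain and a transposition together with a $5$-cycle generates $S_5$. The congruence version of the counting theorem (Theorem~\ref{cong}) then bounds the density of such rings by $\prod_{p<N}\bigl(1-\mu_p(T_p(1112))\bigr)+\prod_{p<N}\bigl(1-\mu_p(T_p(5))\bigr)$, which tends to $0$ as $N\to\infty$ because $\mu_p(T_p(1112))\to 1/12$ and $\mu_p(T_p(5))\to 1/5$. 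In particular the $A_5$-fields are eliminated because they admit no prime of splitting type $(1112)$; Theorem~4 then follows by combining this with Theorem~1 and Lemma~\ref{hard}. You would need either to adopt this splitting-type argument or to supply a genuinely new estimate for quintic fields of square discriminant.
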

In particular, it follows 
that 100\% of
quintic fields are nonsolvable.  


Note that, rather than counting quintic fields and orders up to
isomorphism, we could instead count these objects within a
fixed algebraic closure of $\Q$.  This would simply multiply all
constants appearing in Theorems 1 and 2 by five.  Meanwhile, 
Theorems 3 and 4 of course remain true regardless of whether one
counts quintic extensions up to isomorphism or within an algebraic closure of $\Q$.

The key ingredient that allows us to prove the above results for
quintic (and thus predominantly nonsolvable) fields is a
parametrization of isomorphism classes of quintic orders by means of
four integral alternating bilinear forms in five variables, up to the
action of $\GL_4(\Z)\times\SL_5(\Z)$, which we established
in~\cite{Bhargava4}.  The proofs of Theorems~1--4 can then be reduced
to counting appropriate integer points in certain fundamental regions,
as in~\cite{Bhargava5}.  However, the current case is considerably
more involved than the quartic case, since the relevant space is now
40-dimensional rather than 12-dimensional!  The primary difficulty
lies in counting points in the rather complicated cusps of these
40-dimensional fundamental regions (see
Lemmas~\ref{lem1}--\ref{hard}).
  
The necessary point-counting is accomplished in Section~2, by
carefully dissecting the ``irreducible'' portions of the fundamental regions
into 152 pieces, and then applying a new adaptation of the averaging
methods of
\cite{Bhargava5} to each piece (see Lemma~11).  The resulting counting
theorem (see Theorem~6), in conjunction with the results
of~\cite{Bhargava4}, then yields the asymptotic density of
discriminants of pairs $(R,R')$, where $R$ is an order in a quintic
field and $R'$ is a {\it sextic resolvent ring} of $R$.  Obtaining
Theorems 1--4 from this general density result then requires a sieve,
which in turn uses certain counting results on resolvent rings and
subrings obtained in \cite{Bhargava4} and in the recent work of
Brakenhoff~\cite{Jos}, respectively.  This sieve is carried out in the
final Section~3.


We note that the space of binary cubic forms that was used in the work of
Davenport-Heilbronn to count cubic fields, the space of pairs of ternary
quadratic forms that we used in~\cite{Bhargava5} to count quartic fields, 
and the space of
quadruples of alternating 2-forms in five variables that 
we use in this article, are all examples of what are
known as prehomogeneous vector spaces.  A {\it prehomogeneous vector space}
is a pair $(G,V)$, where $G$ is a reductive group and $V$ is a linear
representation of $G$ such that $G_\C$ has a Zariski open orbit on 
$V_\C$.  The concept was introduced by Sato in the 1960's and a
classification of all irreducible 
prehomogeneous vector spaces was given in the
work of Sato-Kimura~\cite{SatoKimura}, while
Sato-Shintani~\cite{SatoShintani} and Shintani~\cite{Shintani}
developed a theory of zeta functions
associated to these spaces.
\pagebreak

The connection between prehomogeneous vector spaces and field
extensions was first studied systematically in the beautiful 1992
paper of Wright-Yukie~\cite{WY}.  In this work, Wright and Yukie
determined the rational orbits and stabilizers in a number of
prehomogeneous vector spaces, and showed that these orbits correspond
to field extensions of degree 2, 3, 4, or 5.  In their paper, they laid
out a program to determine the density of discriminants of number
fields of degree up to five, by considering adelic versions of
Sato-Shintani's zeta functions as developed by Datskovsky and
Wright~\cite{DW} in their extensive work on cubic extensions.  

However, despite looking very promising, the program via adelic
Shintani zeta functions encountered some difficulties and has not
succeeded to date beyond the cubic case.  The primary difficulties
have been: (a) establishing cancellations among various divergent zeta
integrals, in order to establish a ``principal part formula'' for the
associated adelic Shintani zeta function; and (b) ``filtering'' out
the correct count of extensions from the overcount of extensions that
is inherent in the definition of the zeta function. In the quartic
case, difficulty (a) was overcome in the impressive 1995 treatise of
Yukie~\cite{Yukie}, while (b) remained an obstacle.  In the quintic
case, both (a) and (b) have remained impediments to obtaining a
correct count of quintic field extensions by discriminant.  (For more
on the Shintani adelic zeta function approach and these related
difficulties, see~\cite[\S1]{Bhargava5} and \cite{Yukie}.)

In~\cite{Bhargava5} and in the current article, we overcome the problems 
(a) and (b) above, for quartic and quintic fields respectively, by 
introducing a different counting method that relies more on 
geometry-of-numbers arguments.  Thus, although our methods are different, 
this article may be viewed as completing the program first laid out by 
Wright and Yukie~\cite{WY} to count field extensions in degrees up to 5 
via the use of appropriate prehomogeneous vector spaces.

\vspace{.05in} We now describe in more detail the methods of this
paper, and give a comparison with previous methods.  At least
initially, our approach to counting quintic extensions using the
prehomogeneous vector space $\C^4\otimes\wedge^2\C^5$ is quite similar
in spirit to Davenport-Heilbronn's original method in the cubic
case~\cite{DH} and its refinements developed in the quartic
case~\cite{Bhargava5}.  Namely, we begin by giving~an algebraic
interpretation of the {\it integer} orbits on the associated
prehomogeneous vector space which, in the quintic case, are the orbits
of the group $G_\Z=\GL_4(\Z)\times\SL_5(\Z)$ on the 
40-dimensional~lattice $V_\Z=\Z^4\otimes\wedge^2\Z^5$.  As we showed
in~\cite{Bhargava4}, these integer orbits have an extremely rich
algebraic~interpret-ation and structure (see Theorem~5 for a 
precise statement), enabling us to consider not only quintic fields,
but also more refined data such as all {\it orders} in quintic fields,
the local behaviors of these orders, and their sextic resolvent rings.
This interpretation of the integer orbits then allows us to reduce
our problem of counting orders and fields
to that of enumerating appropriate lattice
points in a fundamental domain for the action of the discrete group 
$G_\Z$ on the real vector space $V_\R=V_\Z\otimes\R$.  

Just as in \cite{DH} and \cite{Bhargava5}, the main difficulty in
counting lattice points in such a fundamental region is that this
region is {not} compact, but instead has cusps (or ``tentacles'')
going off to infinity.  To make matters even more interesting, unlike the
case of binary cubic forms in Davenport-Heilbronn's work---where there
is one relatively simple cusp defined by small degree inequalities in
four variables---in the case of quadruples of quinary alternating
2-forms, the cusps are numerous in number and are defined by
polynomial inequalities of extremely high degree in 40 variables!
These difficulties are further exacerbated by the fact that---contrary
to the cubic case---in the quartic and quintic cases the number of
nondegenerate lattice points in the cuspidal regions is of strictly
{\it greater} order than the number of points in the noncuspidal part
(``main body'') of the corresponding fundamental domains.  The latter
issue is indeed what lies behind the problems (a) and (b) above in the
adelic zeta function method.

Following our work in the quartic case~\cite{Bhargava5}, we overcome
these problems that arise from the cuspidal regions by counting
lattice points not in a single fundamental domain, but over a
{continuous, compact set of fundamental domains}.  This allows one to
``thicken'' the cusps, thereby gaining a good deal of control on the
integer points in these cuspidal regions.  A basic version of this
``averaging'' method was introduced and used in~\cite{Bhargava5} in
the quartic case to handle points in these cusps, and thus enumerate
quartic extensions by discriminant (see~\cite[\S1]{Bhargava5} for more
details).  
However, since the number, complexity and dimensions of the cuspidal
regions are so much greater in the quintic case than in the quartic
case, a number of new ideas and modifications are needed to
successfully carry out the same averaging method in the quintic case.

The primary technical contribution of this article is the introduction
of a method that allows one to systematically and canonically dissect
the cuspidal regions into certain ``nice'' subregions on which a
slightly refined averaging technique (see Sections~2.1--2.2) can then
be applied in a uniform manner.  Using this method, we divide up the
fundamental region into 159 pieces.  The first piece is the main body
of the region, where we show using geometry-of-numbers arguments that
the number of lattice points in the region is essentially its volume.
For each of the remaining 158 cuspidal pieces, we show, by a uniform
argument, that either the number of lattice points in that region is
negligible (see Table~1, Lemma~\ref{hard}), {\it or\,} that the
lattice points in that cuspidal piece are all {\it reducible}, i.e.,
they correspond to quintic rings that are not integral domains
(see~Lemma~\ref{red}).  An asymptotic formula for the number of
{irreducible} integer points in the entire fundamental domain is then
attained.  The interesting interaction between the algebraic
properties of the lattice points (via the correspondence
in~\cite{Bhargava4}) and their geometric locations within the
fundamental domain is therefore what allows us to overcome the
problems (a) and (b) arising in the adelic Shintani zeta function
method.  As explained earlier, a sieving method can then be used to
prove Theorems 1--4.

Our counting method in this article is quite robust and systematic,
and should be applicable in many other situations.  First, it can be
used to reprove the density of discriminants of cubic and quartic
fields, with much stronger error terms than have previously been known
(in fact, in the cubic case it can be used, in conjunction with a
sieve, to obtain an exact second order term; see~\cite{simpledhc}).
Second, the method can be suitably adapted to count cubic,
quartic, and quintic field extensions of any base number field
(see~\cite{dode}).  Third, the method can be used on prehomogeneous
vector spaces having {\it infinite} stabilizer groups, which would
also have a number of interesting applications (see, e.g., \cite{Bhargava8}).
Finally, we expect that the methods should also be adaptable to
representations of algebraic groups that are not necessarily
prehomogeneous.  We hope that these directions will be pursued further
in future~work.

\section{On the class numbers of quadruples of $5\times 5$ 
skew-symmetric matrices}

Let $V=V_\R$ denote the space of quadruples of $5\times 5$
skew-symmetric matrices over the real numbers.  We write an
element of $V_\R$ as an ordered quadruple $(A,B,C,D)$, where the
$5\times 5$ matrices $A$, $B$, $C$, $D$ have entries $a_{ij}$,
$b_{ij}$, $c_{ij}$, $d_{ij}$ respectively.  Such a quadruple
$(A,B,C,D)$ is said to be {\it integral} if all entries of the
matrices $A$, $B$, $C$, $D$ are integral.

The group $G_\Z=\GL_4(\Z)\times\SL_5(\Z)$ acts naturally on the space
$V_\R$.  Namely, an element $g_4\in\GL_4(\Z)$ acts by changing the
basis of the $\Z$-module of matrices spanned by $A,B,C,D$; in terms of
matrix multiplication, we have $(A\;B\;C\;D)^t\mapsto g_4\,
(A\;B\;C\;D)^t$. Similarly, an element $g_5\in \SL_5(\Z)$ changes
the basis of the five-dimensional space on which the skew-symmetric
forms $A,B,C,D$ take values, i.e., $g_5\cdot(A,B,C,D)=
(g_5Ag_5^t, g_5Bg_5^t, g_5 C g_5^t, g_5Dg_5^t)$.
It is clear that the actions of $g_4$ and $g_5$ commute, and that this
action of $G_\Z$ preserves the lattice $V_\Z$ consisting of the
integral elements of $V_\R$.

The action of $G_\Z$ on $V_\R$ (or $V_\Z$) has a unique polynomial
invariant, which we call the {\it discriminant}.  It is a degree 40
polynomial in 40 variables, and is much too large to write down.  An easy
method to compute it for any given element in $V$ was described
in~\cite{Bhargava4}.

The integer orbits of $G_\Z$ on $V_\Z$ have an important arithmetic
significance.  Recall that a {\it quintic ring} is any ring with unit
that is isomorphic to $\Z^5$ as a $\Z$-module; for example, an order in a
quintic number field is a quintic ring.  In~\cite{Bhargava4} we showed
how quintic rings may be parametrized in terms of the $G_\Z$-orbits on
$V_\Z$:

\begin{theorem}\label{main}
There is a canonical bijection between the set of $G_\Z$-equivalence
classes of elements $(A,B,C,D)\in V_\Z$, and the set of isomorphism
classes of pairs $(R,R')$, where $R$ is a quintic ring and $R'$ is a
sextic resolvent ring of $R$.  Under this bijection, we have
$\Disc(A,B,C,D)=\Disc(R)=\frac1{16}\cdot\Disc(R')^{1/3}$.
\end{theorem}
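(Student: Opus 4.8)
The plan is to prove the theorem by constructing explicit maps in both directions---from $G_\Z$-orbits on $V_\Z$ to pairs $(R,R')$, and back---and then verifying that these maps are mutually inverse and compatible with discriminants. The organizing principle is a rank count: a quintic ring $R$ has $R/\Z\cong\Z^4$, while a rank-$6$ sextic resolvent ring $R'$ has $R'/\Z\cong\Z^5$, so that $\GL_4(\Z)$ is exactly the group of identity-fixing base changes of $R/\Z$ and $\SL_5(\Z)$ that of $R'/\Z$ (the $\SL$ rather than $\GL$ reflecting a distinguished generator of $\wedge^5(R'/\Z)$ imposed by the resolvent normalization). An element of $V_\Z=\Z^4\otimes\wedge^2\Z^5$ is thus read as a pairing $\phi\colon(R/\Z)^\ast\to\wedge^2(R'/\Z)$ simultaneously encoding the multiplications of $R$ and $R'$ and the resolvent map between them.

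For the forward direction I would begin with the four alternating forms $A,B,C,D$ on $\Z^5$ and form the generic combination $M(w)=w_1A+w_2B+w_3C+w_4D$ for $w\in\Z^4$. Since $M(w)$ is a $5\times5$ skew-symmetric matrix, deleting its $i$-th row and column gives a $4\times4$ skew matrix whose Pfaffian $Q_i(w)$ is a quadratic form in $w$; the quintuple $(Q_1,\dots,Q_5)$ is a natural \emph{resolvent datum}. From this datum, together with the underlying $\wedge^2$-structure, one reads off explicit structure constants for both the sextic ring $R'$ (with module $\Z^5=R'/\Z$) and the quintic ring $R$ (with module $\Z^4=R/\Z$). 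The substantive work here is to verify that these structure constants genuinely define commutative associative rings with identity, and that the whole construction is $G_\Z$-equivariant, so that equivalent quadruples yield isomorphic pairs.

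For the reverse direction, given $(R,R')$ I would fix identity-adapted bases, $R=\Z\oplus\Z\alpha_1\oplus\cdots\oplus\Z\alpha_4$ and $R'=\Z\oplus\Z\beta_1\oplus\cdots\oplus\Z\beta_5$, pinning down $R/\Z\cong\Z^4$ and $R'/\Z\cong\Z^5$ up to $\GL_4(\Z)$ and $\SL_5(\Z)$. The resolvent relation tying $R$ to $R'$---the ring-theoretic shadow of the exceptional inclusion $S_5\hookrightarrow S_6$ through the six $5$-Sylow subgroups, the conceptual source of the sextic resolvent and of the $\wedge^2\Z^5$ factor---then produces a canonical element of $(R/\Z)\otimes\wedge^2(R'/\Z)=V_\Z$, well defined modulo the basis choices. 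Checking that this is a two-sided inverse of the forward map reduces to tracing both compositions through the normalized bases.

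Finally, the identity $\Disc(A,B,C,D)=\Disc(R)=\tfrac1{16}\Disc(R')^{1/3}$ should follow once the maps are explicit: each of the three quantities is a $G_\Z$-invariant polynomial of degree $40$ in the $40$ coordinates, so by the uniqueness of the polynomial invariant recorded above they are pairwise proportional, and evaluating on a single split (\'etale) orbit fixes the constants $1$ and $\tfrac1{16}$. I expect the principal obstacle to be \emph{surjectivity} of the forward map: one must show that every quintic ring admits a sextic resolvent and that every such pair is realized by an integral quadruple. This demands a self-contained existence theory for sextic resolvent rings, in particular handling the degenerate and non-maximal cases where the generic Pfaffian picture degenerates---and it is here, rather than in the discriminant bookkeeping, that the real difficulty lies.
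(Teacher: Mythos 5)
First, a point of order: the paper contains no proof of this statement. Theorem~\ref{main} is quoted from \cite{Bhargava4}, where the parametrization is established; the present article only uses it as a black box. So your proposal can only be measured against the proof in that cited paper, and at the level of architecture it does match it: the quadruple is read as the matrix of the resolvent map $R/\Z\to\wedge^2(R'/\Z)$, i.e.\ an element of $(R/\Z)^\ast\otimes\wedge^2(R'/\Z)\cong\Z^4\otimes\wedge^2\Z^5$; the forward map passes through the five $4\times4$ sub-Pfaffians $Q_i$; the inverse map normalizes bases adapted to the identity; and the discriminant identity is reduced to invariant theory plus evaluation at a single split point.

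That said, the proposal has two genuine gaps, the second of which you yourself flag. (1) The sentence ``one reads off explicit structure constants for both the sextic ring $R'$ and the quintic ring $R$'' conceals the core of the theorem: one must actually exhibit the structure constants $c_{ij}^k$ of $R$ (and the multiplicative structure of $R'$) as specific integer polynomials in the forty coordinates, and then verify commutativity, associativity, integrality, and $G_\Z$-equivariance. This is the bulk of the work in \cite{Bhargava4} and is not a formal consequence of the Pfaffian construction. Your invariant-theoretic argument for $\Disc(A,B,C,D)=\Disc(R)=\frac1{16}\Disc(R')^{1/3}$ is sound in outline, but it presupposes precisely this step (so that $\Disc(R)$ and $\Disc(R')$ are polynomial in the coordinates) and additionally requires the Sato--Kimura fact that the invariant ring of $G_\C$ on $V_\C$ is generated by a single degree-$40$ polynomial; neither is free. (2) Surjectivity --- that every quintic ring admits a sextic resolvent and that every pair $(R,R')$ arises from an integral quadruple --- is not a routine degeneration check but a separate substantial theorem (it is Corollary~4 of \cite{Bhargava4}, and is invoked later in this very paper in the proof of Lemma~\ref{upestimate}); for non-maximal and degenerate rings the generic Pfaffian picture gives no guidance, and the existence proof requires an explicit construction together with a count of resolvents in terms of the content of $R$. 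Until both steps are supplied, what you have is a correct roadmap of the proof in \cite{Bhargava4} rather than a proof.
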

A {\it sextic resolvent} of a quintic ring $R$ is a sextic ring
$R'$ equipped with a certain {\it resolvent mapping} $R\to\wedge^2R'$
whose precise definition will not be needed here (see~\cite{Bhargava4}
for details).  In view of Theorem~\ref{main}, we wish to try and
understand the number of $G_\Z$-orbits on $V_\Z$ having absolute
discriminant at most~$X$, as $X\rightarrow\infty$.  The number of
integral orbits on $V_\Z$ having a fixed discriminant $\Delta$ is
called a ``class number'', and we wish to understand the behavior of
this class number on average.

From the point of view of Theorem~\ref{main}, we would like to
restrict the elements of $V_\Z$ under consideration to those that are
``irreducible'' in an appropriate sense.  More precisely, we call an element
$(A,B,C,D)\in V_\Z$ {\it irreducible} if, in the
corresponding pair of rings $(R,R')$ in Theorem~\ref{main}, the ring
$R$ is an integral domain.  The quotient field of $R$ is thus a
quintic field in that case.  
We say $(A,B,C,D)$ is {\it reducible} otherwise. 

One may also describe reducibility and 
irreducibility in more geometric terms.
If $(A,B,C,D)\in V_\Z$, then one may consider the $4\times 4$
sub-Pfaffians $Q_1(t_1,t_2,t_3,t_4),\ldots,Q_5(t_1,t_2,t_3,t_4)$ of
the single $5\times 5$ skew-symmetric matrix $At_1+Bt_2+Ct_3+Dt_4$
whose entries are linear forms in $t_1,t_2,t_3,t_4$.  In other words,
$Q_i=Q_i(w,x,y,z)$ is defined as a canonical squareroot of the
determinant of the $4\times 4$ matrix obtained from
$t_1A+t_2B+t_3C+t_4D$ by removing its $i$th row and column.  Thus
these $4\times 4$ Pfaffians $Q_1,\ldots,Q_5$ are quaternary quadratic
forms and so define five quadrics in $\P^3$.  If the element
$(A,B,C,D)\in V_\Z$ has nonzero discriminant, then it is known that
these five quadrics intersect in exactly five points in $\P^3$
(counting multiplicities); see e.g.,~\cite{WY}, \cite{Bhargava4}.  We
refer to these five points as the {\it zeroes of} $(A,B,C,D)$ in
$\P^3$.  In~\cite{Bhargava4} we showed that if $(A,B,C,D)$ corresponds
to $(R,R')$, where $R$ is isomorphic to an order in a quintic field
$K$, then there exists a zero of $(A,B,C,D)$ in $\P^3$ whose field of
definition is $K$.  (The other zeroes of $(A,B,C,D)\in V_\Z$ are thus
defined over the conjugates of $K$.)  Therefore, geometrically, we may
say that $(A,B,C,D)$ is irreducible if and only if it
possesses a zero in $\P^3$ having field of definition $K$, where $K$
is a quintic field extension of $\Q$.  On the other hand, $(A,B,C,D)$
is reducible if and only if $(A,B,C,D)$ possesses a zero in $\P^3$
defined over a number field of degree smaller than five.

The main result of this section is the following theorem: 

\begin{theorem}\label{cna}
Let $N(V^{(i)}_\Z;X)$ denote the number of $G_\Z$-equivalence classes
of irreducible elements $(A,B,C,D)\in V_\Z$ having $5-2i$ real 
zeroes in $\P^3$ and satisfying $|\Disc(A,B,C,D)|<X$.  Then 
\vspace{-.05in}
\[
\begin{array}{rlcl}\label{dodpotqf}
\rm{(a)}& \displaystyle{\lim_{X\rightarrow\infty} \frac{N(V^{(0)}_\Z;X)}{X}}
   &=&\!\! \displaystyle{\frac{\zeta(2)^2\zeta(3)^2\zeta(4)^2\zeta(5)}{240}}; \\[.1in]
\rm{(b)}& \displaystyle{\lim_{X\rightarrow\infty} \frac{N(V^{(1)}_\Z;X)}{X}}
   &=&\!\! \displaystyle{\frac{\zeta(2)^2\zeta(3)^2\zeta(4)^2\zeta(5)}{24}}; \\[.1in]
\rm{(c)}&\displaystyle{\lim_{X\rightarrow\infty} \frac{N(V^{(2)}_\Z;X)}{X}}
   &=&\!\! \displaystyle{\frac{\zeta(2)^2\zeta(3)^2\zeta(4)^2\zeta(5)}{16}}.
\end{array}
\]

\end{theorem}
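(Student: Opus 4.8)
The plan is to translate the orbit count into a lattice-point count in a fundamental domain and then control the cusps by averaging. By Theorem~\ref{main}, counting $G_\Z$-equivalence classes of irreducible $(A,B,C,D)\in V_\Z$ with $5-2i$ real zeroes and $|\Disc|<X$ is the same as counting irreducible points of $V_\Z$ lying in a fundamental domain $\mathcal{F}$ for the action of $G_\Z=\GL_4(\Z)\times\SL_5(\Z)$ on the stratum $V_\R^{(i)}\subset V_\R$ of nonzero-discriminant elements with $5-2i$ real zeroes. I would first set up reduction theory for $G_\R=\GL_4(\R)\times\SL_5(\R)$: the sign of $\Disc$ is determined by $i$ (namely $+,-,+$ for $i=0,1,2$), and $G_\R$ acts transitively on each $V_\R^{(i)}$ once $\Disc$ is pinned to $\pm1$, the stabilizer of a representative $v^{(i)}$ being the automorphism group $\Aut_\R(K^{(i)}_\infty)$ of the associated real \'etale algebra $K^{(i)}_\infty\in\{\R^5,\ \R^3\oplus\C,\ \R\oplus\C^2\}$. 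Writing $G_\R$ in Iwasawa coordinates $NAK$ and adjoining the scaling that sweeps $|\Disc|$ from $1$ to $X$ realizes $\mathcal{F}\cap\{|\Disc|<X\}$ explicitly as a Siegel set acting on $v^{(i)}$.

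Next I would replace the single thin, cusped domain by a continuous family of them, following the quartic case~\cite{Bhargava5}: for any $h$ in a fixed bounded open set $G_0\subset G_\R^1$ (the determinant-$\pm1$ part), the translate $h\mathcal{F}$ is again a fundamental domain, so up to the stabilizer factor
\[
N(V_\Z^{(i)};X)=\frac{1}{\Vol(G_0)}\int_{h\in G_0}\#\bigl\{x\in V_\Z^{(i),\,\mathrm{irr}}\cap h\,\mathcal{F}(X)\bigr\}\,dh .
\]
Averaging over $h$ \emph{thickens} the cusps. In the ``main body'' of $\mathcal{F}$, where every torus coordinate in $A$ is bounded, the region is genuinely fat, and a standard geometry-of-numbers estimate shows the number of lattice points there equals the Euclidean volume up to a lower-order error; the leading term is thus $\Vol\bigl(\mathcal{F}^{(i)}\cap\{|\Disc|<1\}\bigr)\cdot X$.

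The volume then computes cleanly by transporting Haar measure on $G_\R$ to $V_\R$ through the orbit map: the relevant Jacobian is a constant times $d(\mathrm{disc})/|\mathrm{disc}|$ against Haar measure on $G_\Z\backslash G_\R^1$, so
\[
\Vol\bigl(\mathcal{F}^{(i)}\cap\{|\Disc|<1\}\bigr)=\frac{1}{2\,|\Aut_\R(K^{(i)}_\infty)|}\,\Vol\bigl(\GL_4(\Z)\backslash\GL_4(\R)^1\bigr)\,\Vol\bigl(\SL_5(\Z)\backslash\SL_5(\R)\bigr).
\]
The factor $\tfrac12$ records the central element $(-I_4,I_5)\in G_\Z$, which acts on $V$ as $-1$ and identifies the orbits of $v$ and $-v$; the two covolumes equal $\zeta(2)\zeta(3)\zeta(4)$ and $\zeta(2)\zeta(3)\zeta(4)\zeta(5)$ respectively. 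Since $|\Aut_\R(K^{(i)}_\infty)|$ equals $120,\,12,\,8$ for $i=0,1,2$, this produces exactly the denominators $240,\,24,\,16$ and the numerator $\zeta(2)^2\zeta(3)^2\zeta(4)^2\zeta(5)$ of (a)--(c).

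The main obstacle, and the real content of the section, is the cuspidal part where one or more torus coordinates tend to infinity. Unlike the binary-cubic case there are many such cusps, cut out by high-degree inequalities in $40$ variables. The strategy is to dissect the cuspidal region canonically into finitely many pieces ($159$ in all: one main body and $158$ cuspidal pieces), indexed by which coordinates of $(A,B,C,D)$ are forced to be small, and to analyze each by the same averaged count. For each cuspidal piece one shows, using the sub-Pfaffian description of the zeroes of $(A,B,C,D)$ in $\P^3$, one of two things: either every integer point in the piece is \emph{reducible}, so its quintic ring $R$ fails to be a domain and it contributes nothing to $N(V_\Z^{(i)};X)$ (Lemma~\ref{red}); or the averaged number of points in the piece is $o(X)$ and hence negligible (Table~1 and Lemma~\ref{hard}). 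Establishing this dichotomy uniformly---proving that the only pieces carrying $\asymp X$ points are precisely the reducible ones---is the crux, and it is exactly here that the algebraic meaning of the coordinates interacts with their geometric location in the cusp. Summing the main-body volume against the negligible cuspidal contributions yields $N(V_\Z^{(i)};X)=c_iX+o(X)$ with $c_i$ as stated.
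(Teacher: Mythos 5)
Your outline follows the same route as the paper: reduce to counting lattice points in the multiset $\FF v$, average over a continuous family of fundamental domains to thicken the cusps, dissect the cuspidal region into the pieces of Table~1 and dispose of each either by reducibility (Lemmas~\ref{lem1}--\ref{red}) or by the $O(X^{39/40})$ bound of Lemma~\ref{hard}, and compute the main-term volume by transporting Haar measure through the orbit map (Propositions~\ref{covering} and~\ref{volumes2}). Two points, however, need attention.

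First, a genuine gap: your main-body estimate counts \emph{all} integer points there, whereas the theorem counts only \emph{irreducible} orbits. Reducible points are not confined to the cuspidal pieces---an element whose quintic ring is, say, an order in $\Q\times K_4$ can perfectly well have $a_{12}\neq 0$---so you must still show that the reducible points in the main body number $o(X)$. This is Lemma~\ref{3reducible} (in fact the stronger Lemma~\ref{3reducible2}), and it is not a soft observation: the paper proves it in Section~3.2 by applying the congruence-condition refinement (Theorem~\ref{cong}) to the splitting-type sets $T_p(1112)$ and $T_p(5)$ and invoking the local densities from \cite{Bhargava4}. Without this step your argument gives the stated asymptotic only for the count of all (reducible plus irreducible) points with $a_{12}\neq0$. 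Second, your explanation of the factor $\tfrac12$ is not correct as stated: $(-I_4,I_5)$ lies in $G_\Z$ with $\det(-I_4)=+1$, so it is already absorbed into the covolume $\Vol(G_\Z\backslash G_\R^{\pm1})$, and it does not stabilize $v$ (it sends $v$ to $-v$, which lies in the same $G_\Z$-orbit), so it cannot be the source of an extra $\tfrac12$. In the paper this factor arises as the ratio $20/40$ of the Jacobian constant of Proposition~\ref{volumes2} to the degree of the discriminant, via $20\int_1^{X^{1/40}}\lambda^{40}\,d^\times\lambda = X/2$; your final constants are right, but they require that explicit Jacobian computation rather than the central-element heuristic.
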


Theorem~\ref{cna} is proven in several steps.  In Subsection 2.1, we
outline the necessary reduction theory needed to establish some
particularly useful fundamental domains for the action of $G_\Z$ on
$V_\R$.  In Subsections~2.2 and 2.3, we describe a refinement of 
the ``averaging'' method from
\cite{Bhargava5} that allows us to efficiently count integer points in
various components of these fundamental domains in terms of their
volumes.  In Subsections 2.4 and 2.5, we investigate the distribution of
reducible and irreducible integral points within these fundamental
domains.  The volumes of the resulting ``irreducible'' components of
these fundamental domains are then computed in Subsection~2.6, proving
Theorem~\ref{cna}.  A version of
Theorem~\ref{cna} for elements in $V_\Z$ satisfying any specified set
of congruence conditions  is then obtained in Subsection~2.7.

In Section 3, we will show how these counting methods---together
with a sieving argument---can be used to prove Theorems 1--4.

\subsection{Reduction theory}

The action of $G_\R=\GL_4(\R)\times \SL_5(\R)$ on $V_\R$ has three
nondegenerate orbits $V_\R^{(0)}, V_\R^{(1)}, V_\R^{(2)}$, where
$V^{(i)}_\R$ consists of those 
 elements $(A,B,C,D)$ in $V_\R$ having nonzero discriminant and $5-2i$
real zeroes in $\P^3$.  We wish to understand the number
of irreducible $G_\Z$-orbits on 
$V^{(i)}_\Z=V^{(i)}_\R\cap V_\Z$ 
having absolute discriminant at most $X$ ($i=0,1,2$).
We accomplish this by counting the number of integer points of 
absolute discriminant at most $X$ in suitable fundamental domains
for the action of $G_\Z$ on $V_\R$.

These fundamental regions are constructed as follows.  First, let $\FF$
denote a fundamental domain in $G_\R$ for $G_\Z\backslash G_\R$.  We may
assume that $\FF$ is contained in a standard Siegel set, 
i.e., we may assume $\FF$ is of the form $\FF=
\{nak\lambda:n\in N'(a),a\in A',k\in K,\lambda\in\Lambda\}$, where
\begin{eqnarray*}\label{siegel}
K\,&=&\{\mbox{special orthogonal transformations in $G_\R$}\};\\ 
A'&=&\{a(s_1,s_2,\ldots,s_7):s_1,s_2,\ldots,s_7\geq c\},\;\mbox{where}\,\\ 
{}&{}&a({\boldmath{s}})={\footnotesize
\left(\left(\begin{array}{cccc} \!\!\!s_1^{-3}s_2^{-1}s_3^{-1}
\!\!\!\!\!\!\!\!\!\!\! & 
{} & {} & {} \\[.04in] {} & \!\!\!\!\!\!\!\!\!s_1 s_2^{-1} s_3^{-1} \!\!\!\!\!\!\!\!
& {} &{} \\[.04in] {}&{}&\!\!\!\!\!\!\!\!s_1 s_2 s_3^{-1}\!\!\!\!\!\!\!\! 
&{}\\[.04in] {}&{}&{}&
\!\!\!\!\!\!\!\!s_1 s_2 s_3^{3}\!\!  \end{array}\right),
\left(\begin{array}{ccccc} \!\!s_4^{-4}s_5^{-3}s_6^{-2}s_7^{-1} \!\!\!\!\!\!\!\!
& {} & {} &{} &{}\\[.04in] 
{}& \!\!\!\!\!\!\!\!\!\!\!\!{}\!\!\!\!s_4s_5^{-3}s_6^{-2}s_7^{-1}\!\!\!\!\!\!\!\!&{} &{}&{}\\[.04in]{}&{}&
\!\!\!\!\!\!\!\!\!\!\!{}\!\!s_4 s_5^{2}s_6^{-2}s_7^{-1}\!\!\!\!\!\!\!\!&{}&
{}\\[.04in]
{}&{}&{}& \!\!\!\!\!\!\!\!\!s_4s_5^{2}s_6^{3}s_7^{-1}\!\!\!\!\!\!\!\! 
&{}\\[.04in] {}&{}&{}&{}&\!\!\!\!\!\!\!\!s_4s_5^{2}s_6^{3}s_7^4\!\! \end{array} \right)\right)};\,\,\\
\bar N'\,&=&\{n(u_1,u_2,\ldots,u_{16}):u=(u_1,u_2,\ldots,u_{16})\in\nu(a) \},\;\mbox{where}\,\\ 
{}&{}&
n({\boldmath{u}})={\footnotesize
\left(\left(\begin{array}{cccc} 1         & {} & {} & {} \\ {u_1} & 1 
& {} &{} \\ {u_2}&{u_3}&1&{}\\{u_4}&{u_5}&{u_6}&1 \end{array}\right),
\left(\begin{array}{ccccc} 1 & {} & {} &{} &{}\\ 
{u_7}& 1 &{} &{}&{}\\{u_8}&{u_9}&
1&{}&{}\\{u_{10}}&{u_{11}}&{u_{12}}&1&{}\\ 
{u_{13}}&{u_{14}}&{u_{15}}&{u_{16}}&1\end{array} \right)\right)};\,\, \\
\Lambda\,&=&\{\{\lambda:\lambda>0\},\;\mbox{where}\,\\
{}&{}& \lambda \mbox{ acts by }{\footnotesize
\left(\left(\begin{array}{cccc} \lambda & {} & {} & {} \\ {} & \lambda 
& {} &{} \\ {}&{}&\lambda&{}\\{}&{}&{}&\lambda \end{array}\right),
\left(\begin{array}{ccccc} 1 & {} & {} &{} &{}\\ 
{}& 1 &{} &{}&{}\\{}&{}&
1&{}&{}\\{}&{}&{}&1&{}\\ 
{}&{}&{}&{}&1\end{array} \right)\right)};
\end{eqnarray*}
here $c>0$ is an absolute constant and $\nu(a)$ is
an absolutely bounded measurable subset of $\R^{16}$ 
dependent only on the value of $a\in A'$.


For $i=0,1,2$, let $n_i$ denote the cardinality of the stabilizer in
$G_\R$ of any element $v\in V^{(i)}_\R$ (it follows from
Proposition~\ref{covering} below that $n_1=120$, $n_2=12$, and $n_3=8$).
Then for any $v\in V^{(i)}_\R$, $\FF v$ will be the union of $n_i$
fundamental domains for the action of $G_\Z$ on $V^{(i)}_\R$.  Since
this union is not necessarily disjoint, $\FF v$ is best viewed as a
multiset, where the multiplicity of a point $x$ in $\FF v$ is given by
the cardinality of the set $\{g\in\FF\,\,|\,\,gv=x\}$. Evidently, this
multiplicity is a number between 1 and $n_i$.

Even though the multiset $\FF v$ is the union of $n_i$ fundamental
domains for the action of $G_\Z$ on $V^{(i)}_\R$, not all elements in
$G_\Z\backslash V_\Z$ will be represented in $\FF v$ exactly $n_i$
times.  In general, the number of times the $G_\Z$-equivalence class
of an element $x\in V_\Z$ will occur in $\FF v$ is given by
$n_i/m(x)$, where $m(x)$ denotes the size of the stabilizer of $x$ in
$G_\Z$.  We define $N(V_\Z^{(i)};X)$ to be the (weighted) number of
irreducible $G_\Z$-orbits on $V_\Z^{(i)}$ having absolute discriminant
at most $X$, where each orbit is counted by a weight of $1/m(x)$ for
any point $x$ in that orbit.  Thus $n_i\cdot N(V_\Z^{(i)};X)$ is the
(weighted) number of points in $\FF v$ 
having absolute discriminant at most $X$, where each point $x$ in the
multiset $\FF v$ is counted with a weight of $1/m(x)$.

We note that the $G_\Z$-orbits in $V_\Z$ corresponding to orders in
non-Galois quintic fields will then each be counted simply with a
weight of 1, since such orders can have no automorphisms.  We will
show (see Lemma~\ref{3reducible2}) that orbits having weight $< 1$
are negligible in number in comparison to those having weight $1$, and
so points of weight $<1$ will not be important as they will not
affect the main term of the asymptotics of $N(V_\Z^{(i)}; X)$ as
$X\to\infty$.


Now the number of integer points can be difficult to count in a
single fundamental region $\FF v$.  The main technical obstacle
is that the fundamental region $\FF v$ is not compact, but rather has
a system of 
cusps going off to infinity which in fact contains infinitely many
points, including many irreducible points.  We simplify the counting
of such points by ``thickening'' the cusp; more precisely, we compute
the number of points in the fundamental region $\FF v$ by averaging
over lots of such fundamental domains, i.e., by averaging over a
continuous range of points $v$ lying in a certain special compact
subset $H$ of $V$.

\subsection{Averaging over fundamental domains}


Let $H=H(J)=\{w\in V : \|w\|\leq J,\;|\Disc(w)|\geq 1\}$, where
$\|w\|$ denotes a Euclidean norm on $V$ fixed under the action of $K$,
and $J$ is sufficiently large so that $H$ is nonempty and of nonzero
volume.  
We write $V^{(i)}:=V^{(i)}_\R$.  Then we have
\begin{equation}
N(V^{(i)}_\Z;X) = \frac{\int_{v\in H\cap V^{(i)}} 
\#\{x\in \FF v\cap V_\Z^\irr: |\Disc(x)|<X\}\;
|\Disc(v)|^{-1} dv}
{n_i\cdot\int_{v\in H\cap V^{(i)}} \:|\Disc(v)|^{-1} dv},
\end{equation}
where $V_\Z^\irr\subset V_\Z$ denotes the subset of 
irreducible points in $V_\Z$.  The denominator of the latter
expression is, by construction, a finite absolute constant
$M_i=M_i(J)$ greater than zero.  We have chosen the measure
$|\Disc(v)|^{-1}\,dv$ because it is a $G_\R$-invariant measure.

More generally, for any $G_\Z$-invariant subset $S\subset
V_\Z^{(i)}$, let $N(S;X)$ denote the number of irreducible
$G_\Z$-orbits on $S$ having discriminant less than $X$.  Then
$N(S;X)$ can be expressed as
\begin{equation}\label{nsx}
N(S;X) = \frac{\int_{v\in H\cap V^{(i)}} 
\#\{x\in \FF v\cap S^\irr: |\Disc(x)|<X\}\;
|\Disc(v)|^{-1} dv}
{n_i\cdot\int_{v\in H\cap V^{(i)}} \:|\Disc(v)|^{-1} dv},
\end{equation}
where $S^\irr\subset S$ denotes the subset of irreducible
points in $S$.  We shall use this definition of $N(S;X)$ for any
$S\subset V_\Z$, even if $S$ is not $G_\Z$-invariant.  Note that for
disjoint $S_1,S_2\subset V_\Z$, we have $N(S_1\cup S_2)=N(S_1)+N(S_2)$.

Now since $|\Disc(v)|^{-1}\,dv$ is a $G_\R$-invariant measure, we have
for any $f\in C_0(V^{(i)})$, with $v,x\in V_\R^{(i)}$ and $g\in G_\R$
satisfying $v=gx$, that $f(v)|\Disc(v)|^{-1} dv= r_i\, f(gx)\,dg$ for
some constant $r_i$ dependent only on whether $i=0$, $1$ or $2$; here
$dg$ denotes a left-invariant Haar measure on $G_\R$.  We may thus
express the above formula for $N(S;X)$ as an integral over $\FF\subset
G_\R$:
\begin{eqnarray}
N(S;X)&\!\!=\!\!&  \frac{r_i}{M_i}\int_{g\in\FF}
\#\{x\in S^\irr\cap gH:|\Disc(x)|<X\}\,dg\\[.075in] 
&\!\!=\!\!&  \frac{r_i}{M_i}\int_{g\in N'(a)A'\Lambda K}
\#\{x\in S\cap \bar n(u)a(s) \lambda k H:|\Disc(x)|<X\}\,dg\,.
\end{eqnarray}
Let us write $H(u,s,\lambda,X) = 
\bar n(u)a(s)\lambda H\cap\{v\in V^{(i)}:|\Disc(v)|<X\}$.
Noting that $KH=H$, $\int_K dk = 1$ (by convention), and
$dg =  s_1^{-12}s_2^{-8}s_3^{-12}s_4^{-20}s_5^{-30}s_6^{-30}s_7^{-20}
du\,d^\times s\, d^\times\lambda\, dk$ (up to scaling), we have
\begin{equation}\label{avg}
N(S;X) = \frac{r_i}{M_i}\int_{g\in N'(a)A'\Lambda}                              
\#\{x\in S^\irr\cap H(u,s,\lambda,X)\}
\,s_1^{-12}s_2^{-8}s_3^{-12}s_4^{-20}s_5^{-30}s_6^{-30}s_7^{-20} \,
du\, d^\times t\,d^\times \lambda\,.
\end{equation}

We note that the same counting method may be used even if we are interested
in counting both reducible and irreducible orbits in $V_\Z$.  For any
set $S\subset V_\Z^{(i)}$, 
let $N^*(S;X)$ be defined by (\ref{nsx}), but where the 
superscript ``irr'' is removed.  Thus for a 
$G_\Z$-invariant set $S\subset V_\Z^{(i)}$, \,$n_i\cdot N^*(S;X)$ counts the
total (weighted) number
of $G_\Z$-orbits in $S$ having absolute discriminant 
nonzero and less than $X$ (not just the irreducible ones).  
By the same reasoning, we have
\begin{equation}\label{avgS}
N^*(S;X) = \frac{r_i}{M_i}\int_{g\in N'(a)A'\Lambda}                              
\#\{x\in S\cap H(u,s,\lambda,X)\}
\, s_1^{-12}s_2^{-8}s_3^{-12}s_4^{-20}s_5^{-30}s_6^{-30}s_7^{-20} \,
du\, d^\times t\,d^\times \lambda\,.
\end{equation}
The expression (\ref{avg}) for $N(S;X)$, and its analogue
(\ref{avgS}) for $N^*(S,X)$,
will be useful in the sections that follow.

\subsection{A lemma from geometry of numbers}

To estimate the number of lattice points in $H(u,s,\lambda,X)$, we
have the following elementary proposition from the
geometry-of-numbers, which is essentially due to
Davenport~\cite{Davenport1}. 
To state the proposition, we require the following simple definitions.
A multiset $\mathcal R\subset\R^n$ is said to be {\it measurable} if
$\mathcal R_k$ is measurable for all $k$, where $\mathcal R_k$ denotes
the set of those points in $\mathcal R$ having a fixed multiplicity
$k$.  Given a measurable multiset $\mathcal R \subset\R^n$, we define
its volume in the natural way, that is, $\Vol(\mathcal R)=\sum_k
k\cdot\Vol(\mathcal R_k)$, where $\Vol(\mathcal R_k)$ denotes the
usual Euclidean volume of $\mathcal R_k$.

\begin{lemma}\label{genbound}
  Let $\mathcal R$ be a bounded, semi-algebraic multiset in $\R^n$
  having maximum multiplicity $m$, and which is defined by at most $k$
  polynomial inequalities each having degree at most $\ell$.  Let $\RR'$
  denote the image of $\RR$ under any $($upper or lower$)$ triangular,
  unipotent transformation of $\R^n$.  Then the number of integer
  lattice points $($counted with multiplicity$)$ contained in the
  region $\mathcal R'$ is
\[\Vol(\mathcal R)+ O(\max\{\Vol(\bar{\mathcal R}),1\}),\]
where $\Vol(\bar{\mathcal R})$ denotes the greatest $d$-dimensional 
volume of any projection of $\mathcal R$ onto a coordinate subspace
obtained by equating $n-d$ coordinates to zero, where 
$d$ takes all values from
$1$ to $n-1$.  The implied constant in the second summand depends
only on $n$, $m$, $k$, and $\ell$.
\end{lemma}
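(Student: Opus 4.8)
The plan is to deduce this from the classical lattice-point estimate of Davenport~\cite{Davenport1}, the new feature being uniformity over the unipotent transformation. First I would reduce to the case of multiplicity one. Writing $\RR=\sum_k k\cdot\RR_k$, where $\RR_k$ is the ordinary set of points of multiplicity exactly $k$, both the lattice-point count (with multiplicity) and the volume are additive over the $\RR_k$, and each $\RR_k$ is again bounded and semialgebraic, cut out by a number of polynomial inequalities bounded in terms of $k$, $\ell$, $m$; the same reduction applies to $\RR'$ and to its projections. Since the multiplicity is at most $m$, it therefore suffices to treat an ordinary bounded semialgebraic set $\RR$ together with an arbitrary triangular unipotent $\tau$, where $\RR'=\tau\RR$. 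After possibly reversing the order of the coordinates I may assume $\tau$ is lower triangular.

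The argument will be an induction on $n$, the point being that $\tau$ interacts well with the coordinate flag. I would first record the four properties that propagate the inductive hypotheses: (i) $\tau$ is volume preserving, so $\Vol(\RR')=\Vol(\RR)$; (ii) $\tau$ carries each line parallel to $e_n$ to a line parallel to $e_n$, preserving its length, since for lower triangular unipotent $\tau$ the first $n-1$ coordinates of $\tau v$ depend only on the first $n-1$ coordinates of $v$, while the last coordinate is shifted by a constant; (iii) consequently the projection $\pi\colon\R^n\to\R^{n-1}$ forgetting $x_n$ satisfies $\pi\circ\tau=\bar\tau\circ\pi$, where $\bar\tau$ is the lower triangular unipotent transformation of $\R^{n-1}$ given by the top-left block of $\tau$; and (iv) $\tau$ preserves the number and degrees of the defining inequalities, hence the semialgebraic complexity. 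The base case $n=1$ is immediate: $\RR'$ is a union of $O_{k,\ell}(1)$ intervals, and the number of integers in an interval equals its length plus $O(1)$.

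For the inductive step I would fiber $\RR'$ over $\Z^{n-1}$ by the lines parallel to $e_n$. On each nonempty fiber the intersection with $\RR'$ is a union of $O_{k,\ell}(1)$ intervals, so the number of integers on it equals the total fiber length plus $O(1)$. Summing the error terms gives $O\big(\#(\Z^{n-1}\cap\pi\RR')\big)$; by (iii) and (iv), $\pi\RR'=\bar\tau(\pi\RR)$ is the image of the bounded semialgebraic set $\pi\RR\subset\R^{n-1}$ under a triangular unipotent map, so the inductive hypothesis bounds this count by $\Vol(\pi\RR)$ plus lower-order terms, all of which are $(n-1)$- and lower-dimensional coordinate projections of $\RR$, hence $O(\max\{\Vol(\bar\RR),1\})$. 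It remains to compare the sum of fiber lengths with $\Vol(\RR')=\Vol(\RR)$. Here I would use (ii): the length of the fiber of $\RR'$ over $m\in\Z^{n-1}$ equals the length of the fiber of $\RR$ over $\bar\tau^{-1}m$, so that the sum of fiber lengths is a Riemann sum for $\int_{\R^{n-1}}(\text{fiber length of }\RR)\,dy=\Vol(\RR)$, sampled over the unimodular lattice $\bar\tau^{-1}\Z^{n-1}$.

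The last step is the main obstacle: bounding the discrepancy between this Riemann sum and $\Vol(\RR)$ by a coordinate projection of $\RR$, uniformly in the arbitrary shear $\bar\tau$. I would argue that the fiber-length function is semialgebraic of complexity bounded in terms of $k$ and $\ell$, so it has a bounded number of monotone pieces along every line; its supremum is at most $\Vol$ of the projection of $\RR$ onto the $x_n$-axis, and this supremum is unchanged when the base is sheared by $\bar\tau$, precisely because property (ii) preserves fibers and their lengths. Controlling the Riemann-sum discrepancy by the variation of the fiber-length function along the coordinate directions then produces, term by term, bounds of the form (bounded constant)$\,\times\,$(volume of a coordinate projection of $\RR$) --- crucially projections of $\RR$ and not of the sheared set $\RR'$, whose projections may be far larger. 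Securing that this holds uniformly in $\tau$ is exactly what property (ii) provides, and it is the delicate point of the argument. Summing the two error contributions against the main term $\Vol(\RR)$, and taking the maximum with $1$ to absorb the accumulated base-case constants, yields the estimate $\Vol(\RR)+O(\max\{\Vol(\bar\RR),1\})$, with implied constant depending only on $n,m,k,\ell$.
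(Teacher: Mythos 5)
The paper gives no actual proof of this lemma---it only remarks that Davenport's argument for compact semialgebraic sets ``adapts without essential change'' to bounded multisets and to their images under triangular unipotent maps---so your attempt to carry out the adaptation is exactly what is being left to the reader. Your skeleton is the right one: the reduction to multiplicity one, the reduction to lower triangular $\tau$, and especially the observations that $\tau$ translates each line parallel to $e_n$ and satisfies $\pi\circ\tau=\bar\tau\circ\pi$ with $\bar\tau$ again triangular unipotent, so that the fiberwise counting error is controlled by the inductive hypothesis applied to $\bar\tau(\pi\RR)$. All of that is sound.

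The gap is at the step you yourself call the main obstacle, and as described it would not close. You must compare $\sum_{m'\in\bar\tau^{-1}\Z^{n-1}}L(m')$ with $\int_{\R^{n-1}}L(y)\,dy=\Vol(\RR)$, where $L$ is the fiber-length function of $\RR$. Your proposed mechanism---bound the discrepancy by the variation of $L$ along coordinate directions, using that $\sup L$ is at most the length of the projection of $\RR$ onto the $x_n$-axis---has two defects. First, the discrepancy of a Riemann sum over a general unimodular lattice is not controlled by coordinate-direction variation; what rescues you is that $\bar\tau^{-1}\Z^{n-1}$ is itself triangularly sheared, hence fibers over a sheared lattice in $\R^{n-2}$ with fibers equal to translates of $\Z$ in the $e_{n-1}$-direction, so a \emph{second} induction on such weighted sums over sheared lattices is required; you never invoke this structure. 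Second, the bound $\mathrm{Var}_t\,L(y'',t)\ll\sup_t L(y'',t)\le|\mathrm{proj}_{x_n}\RR|$, summed over the relevant base points, produces a \emph{product} of a one-dimensional projection with an $(n-2)$-dimensional one. For a thin tilted slab such as $\{(x,y,z):0\le x\le M,\ |y|\le 1,\ |z-\beta x|\le\epsilon\}$ with $\beta$ large, this product is of order $\beta M^2$, while every single coordinate projection of $\RR$ has volume $O(\beta M)$; so the estimate you would obtain is strictly weaker than the one the lemma asserts. The repair is the pointwise domination $\sup_t L(y'',t)\le L_{\mathrm{proj}}(y'')$, where $L_{\mathrm{proj}}$ is the fiber-length function of the coordinate projection of $\RR$ forgetting $x_{n-1}$: then the summed variation is again a sheared Riemann sum, now for the volume of a single coordinate projection of $\RR$, and the induction closes. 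Without this the decisive estimate is asserted rather than proved.
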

Although Davenport states the above lemma only for compact
semi-algebraic sets $\mathcal R\subset\R^n$, his proof adapts without
essential change to the more general case of a bounded semi-algebraic
multiset $\RR\subset\R^n$, with the same estimate applying also to
any image $\mathcal R'$ of $\mathcal R$ under a unipotent triangular
transformation.



\subsection{Estimates on reducible quadruples $(A,B,C,D)$}

In this section we describe the relative frequencies with which
reducible and irreducible elements sit inside various parts of the
fundamental domain $\FF v$, as $v$ varies over the compact region $H$.

We begin by describing some sufficient conditions that guarantee that
a point in $V_\Z$ is reducible.

\begin{lemma}\label{lem1}
  Let $(A,B,C,D)\in V_\Z$ be an element such that some non-trivial 
  $\Q$-linear combination of $A,B,C,D$ has rank $\leq 2$.  Then $(A,B,C,D)$
  is reducible.
\end{lemma}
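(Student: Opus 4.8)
The plan is to connect the rank hypothesis on the linear combination directly to the geometric description of reducibility recorded just before the statement of Theorem~\ref{cna}. Recall that for $(A,B,C,D)$ the quadrics $Q_1,\dots,Q_5$ are the $4\times 4$ sub-Pfaffians of the $5\times 5$ skew-symmetric matrix $M(t)=t_1A+t_2B+t_3C+t_4D$, each $Q_i(t)$ being a canonical square root of the determinant of the submatrix obtained by deleting the $i$th row and column. The key observation I would record first is that a point $[t]=[t_1:t_2:t_3:t_4]\in\P^3$ is a common zero of $Q_1,\dots,Q_5$ if and only if $M(t)$ has rank at most $2$. Indeed, a $5\times 5$ skew-symmetric matrix has even rank $0$, $2$, or $4$; its rank equals $4$ precisely when the vector of signed sub-Pfaffians $(Q_1(t),\dots,Q_5(t))$ is nonzero (this vector always lies in the kernel of $M(t)$), while rank $\le 2$ forces every $4\times 4$ principal minor, and hence every sub-Pfaffian, to vanish. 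Thus the zeroes of $(A,B,C,D)$ in $\P^3$ are exactly the points at which $M(t)$ degenerates to rank $\le 2$.

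Given this dictionary, the hypothesis that some nontrivial $\Q$-linear combination $t_1A+t_2B+t_3C+t_4D$ has rank $\le 2$ translates immediately into the existence of a zero of $(A,B,C,D)$ at the point $[t_1:t_2:t_3:t_4]$ with $t\in\Q^4\setminus\{0\}$. This zero lies in $\P^3(\Q)$, i.e.\ it is defined over $\Q$, a number field of degree $1<5$.

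It then remains only to invoke reducibility. If $\Disc(A,B,C,D)\ne 0$, I would appeal directly to the geometric criterion stated earlier: an element possessing a zero in $\P^3$ defined over a number field of degree smaller than five is reducible, so we are done. If instead $\Disc(A,B,C,D)=0$, then by Theorem~\ref{main} the associated quintic ring $R$ has discriminant zero; since an integral-domain quintic ring is an order in a quintic field and hence has nonzero discriminant, $R$ cannot be a domain, so $(A,B,C,D)$ is reducible by definition. Either way the conclusion follows.

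The argument is short, and rather than a genuine obstacle there are only two subtle points to watch. The first is pinning down that the simultaneous vanishing of all five sub-Pfaffians is equivalent to rank $\le 2$ (not merely rank $<4$, which here coincides because skew-symmetric rank is forced to be even). The second is treating the degenerate case $\Disc=0$ separately, since the clean ``zero in $\P^3$'' correspondence was stated only for elements of nonzero discriminant. Neither requires any computation with the degree-$40$ discriminant polynomial itself.
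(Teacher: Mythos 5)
Your proof is correct and follows essentially the same route as the paper's: both reduce the rank hypothesis to the existence of a rational common zero of the five sub-Pfaffians at the point $[r:s:t:u]\in\P^3(\Q)$, and both then invoke the fact from the parametrization paper that the zeroes of an irreducible element are defined over conjugate quintic extensions of $\Q$. The extra care you take---justifying via the evenness of skew-symmetric rank that rank $\le 2$ forces all five sub-Pfaffians to vanish, and disposing of the $\Disc=0$ case separately---is detail that the paper's proof-by-contradiction leaves implicit (an irreducible element automatically has nonzero discriminant), but it is welcome.
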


\begin{proof}
  Suppose $E=rA+sB+tC+uD$, where $r,s,t,u\in\Q$ are not all 
  zero.  Let $Q_1,\ldots,Q_5$ denote the five $4\times 4$
  sub-Pfaffians of $(A,B,C,D)$.  Then we have proven in
  \cite{Bhargava4} that if $(A,B,C,D)\in V_\Z$ is irreducible, then
  the quadrics $Q_1=0,\ldots,Q_5=0$ intersect in five points in
  $\P^3(\bar\Q)$, and moreover, these five points are defined over
  conjugate quintic extensions of $\Q$.  However, if rank$(E)\leq 2$,
  then $[r,s,t,u]\in\P^3(\Q)$ is a common zero of $Q_1,\ldots,Q_5$ and
  it is defined over $\Q$, contradicting the irreducibility of
  $(A,B,C,D)$.
\end{proof}

\begin{lemma}\label{lem2}
  Let $(A,B,C,D)\in V_\Z$ be an element such that some non-trivial 
  $\Q$-linear combination of $Q_1,\ldots,Q_5$ factors over
  $\Q$ into two linear factors, where $Q_1,\ldots,Q_5$ denote the five
  $4\times 4$ sub-Pfaffians of $(A,B,C,D)$.  Then $(A,B,C,D)$ is 
  reducible.
\end{lemma}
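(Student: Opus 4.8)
The plan is to reduce Lemma~\ref{lem2} to the geometric criterion for reducibility already established in the excerpt: an element $(A,B,C,D)\in V_\Z$ is reducible precisely when it possesses a zero in $\P^3$ defined over a number field of degree smaller than five. So the entire task is to produce, from the factorization hypothesis, a common zero of $Q_1,\ldots,Q_5$ in $\P^3$ that is defined over a field of degree at most four over $\Q$.

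First I would pass to the quadric $Q = r_1 Q_1 + \cdots + r_5 Q_5$ (with $r_i\in\Q$ not all zero) which, by hypothesis, factors over $\Q$ as $Q = L_1 L_2$ with $L_1, L_2$ linear forms in $w,x,y,z$ having rational coefficients. The five zeroes of $(A,B,C,D)$ all lie on $Q=0$, hence each lies on the union of the two rational hyperplanes $L_1 = 0$ and $L_2 = 0$ in $\P^3$. My plan is to intersect the remaining quadrics with one of these rational hyperplanes. Fixing, say, $L_1 = 0$, I would restrict $Q_1,\ldots,Q_5$ to this plane $\P^2\cong\{L_1=0\}$, obtaining five conics in a rational $\P^2$. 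Any common zero of $(A,B,C,D)$ lying on $L_1 = 0$ is a common zero of these five restricted conics, and the ambient plane is defined over $\Q$.

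The key step is then to argue that at least one of the five zeroes of $(A,B,C,D)$ lies in a proper-degree locus. Since all five zeroes are distributed between the two rational planes $L_1=0$ and $L_2=0$, at least one plane—say $L_1=0$—contains a zero. If that plane contains a zero $P$ but not all five zeroes, then the Galois orbit of $P$ cannot be all five conjugate points (those would have to lie in a single plane only if the whole configuration degenerates), so $P$ generates a field of degree at most four; more carefully, the zeroes splitting nontrivially across two $\Q$-rational planes forces the degree-five Galois orbit to break up, which is exactly the obstruction to irreducibility. I would make this precise by noting that if $(A,B,C,D)$ were irreducible, its five zeroes would form a single Galois orbit defined over conjugate quintic fields, and such a transitively-permuted orbit cannot be partitioned by two $\Q$-rational hyperplanes into nonempty pieces of sizes summing to five unless one hyperplane contains all five points—but five distinct points in general position (guaranteed by nonzero discriminant) do not lie on a common plane. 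This contradiction yields reducibility.

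The main obstacle I anticipate is the degenerate configuration where one of the rational planes $L_i=0$ happens to contain \emph{all} five zeroes, so that the ``splitting'' argument gives no information; here I would need the nonzero-discriminant hypothesis to rule out five distinct zeroes being coplanar, or else handle the case directly by showing that coplanarity of all five zeroes itself forces a rational relation (via Lemma~\ref{lem1}, a rank drop, or a common rational zero) and hence reducibility. A secondary technical point is the possibility $L_1 = L_2$ (a perfect square) or $L_1, L_2$ proportional, which collapses the two planes into one; this falls under the same all-coplanar degenerate case and is dispatched by the same general-position argument. Once these degeneracies are excluded, the conclusion follows immediately from the geometric reducibility criterion recalled above.
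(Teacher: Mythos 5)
Your argument is correct and is essentially the paper's: both proofs use that the five zeroes of an irreducible element form a single Galois orbit of conjugate quintic points, so that a rational plane meeting them in a Galois-stable, nonempty, proper way cannot exist. The paper extracts the contradiction by pigeonhole --- one of the two rational linear factors must vanish at three of the five zeroes, and the plane through any three of the five conjugate points lies in a $\Gal(\bar\Q/\Q)$-orbit of size at least $5$, hence is never rational --- which absorbs into that orbit count the same general-position input (the five zeroes of a nondegenerate element are not all coplanar) that you correctly flag as the one external fact your all-five-points-on-one-rational-plane variant needs.
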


\begin{proof}
  As noted in the proof of Lemma~\ref{lem1}, the five associated
  quadratic forms $Q_1,\ldots,Q_5$ of an irreducible element
  $(A,B,C,D)\in V_\Z$ possess five common zeroes that are defined over
  conjugate quintic fields, and these zeroes are conjugate to each
  other over $\Q$.  It follows that each of the ${5\choose 3}=10$
  planes, going through subsets of three of those five points, cannot
  be defined over $\Q$, as these planes will each be part of a
  $\Gal(\bar\Q/\Q)$-orbit of size at least 5.  However, if some
  rational quaternary quadratic form $Q$ factors over $\Q$ into linear
  factors, then (by the pigeonhole principle) at least one of these
  two rational factors must vanish at three of the five common points
  of intersection, a contradiction.
\end{proof} 

\begin{lemma}\label{red}
Let $(A,B,C,D)\in V_\Z$ be an element such that all the variables
in at least one of the following sets vanish:
\begin{itemize}
\ritem{(i)}   $\{a_{12},a_{13},a_{14},a_{15},a_{23},a_{24},a_{25}\}$
\ritem{(ii)}  $\{a_{12},a_{13},a_{14},a_{23},a_{24},a_{34}\}$ 
\ritem{\;\;\;(iii)} 
$\{a_{12},a_{13},a_{14},a_{15}\}\cup\{b_{12},b_{13},b_{14},b_{15}\} $
\ritem{(iv)} 
$\{a_{12},a_{13},a_{14},a_{23},a_{24}\}\cup\{b_{12},
b_{13},b_{14},b_{23},b_{24}\} $
\ritem{(v)}  
$\{a_{12},a_{13},a_{14}\}\cup\{b_{12},b_{13},b_{14}\}\cup\{c_{12},
c_{13},c_{14}\}$
\ritem{(vi)}  
$\{a_{12},a_{13},a_{23}\}\cup\{b_{12},b_{13},b_{23}\}\cup\{c_{12},
c_{13},c_{23}\}$
\ritem{(vii)}
$\{a_{12},a_{13}\}\cup\{b_{12},b_{13}\}\cup\{c_{12},c_{13}\}\cup\{d_{12},
d_{13}\}$
\end{itemize} \vspace{.045in}
Then $(A,B,C,D)$ is reducible.
\end{lemma}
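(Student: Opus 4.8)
The plan is to reduce each of the seven cases (i)--(vii) to one of the two already-established reducibility criteria, Lemma~\ref{lem1} (a nontrivial $\Q$-linear combination of $A,B,C,D$ has rank $\leq 2$) or Lemma~\ref{lem2} (a nontrivial $\Q$-linear combination of the sub-Pfaffians $Q_1,\ldots,Q_5$ factors into two rational linear forms). For each vanishing pattern I would exhibit explicitly either a combination $E=rA+sB+tC+uD$ that is forced to have small rank, or a common rational zero of the $Q_i$, or a rational factorization of some $Q_i$. The point is that each prescribed set of vanishing coordinates forces a block of zeroes in the upper-left corner of one or more of the matrices, and such blocks directly constrain ranks and Pfaffians.

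**Cases forcing low rank (applying Lemma~\ref{lem1}).** Case (i) kills the entire first two rows (equivalently, the first two rows and columns, by skew-symmetry) of $A$ outside the $\{3,4,5\}$-block; after removing the one remaining entry this says $A$ is supported on the $3\times 3$ lower-right skew block, which has rank $\leq 2$ since odd skew-symmetric matrices are singular --- so $E=A$ itself works. Case (ii) places all zeroes in the upper-left $4\times 4$ skew block of $A$, so $A$ is supported on indices involving~$5$ only, again of rank~$\leq 2$, and once more $E=A$. These are the cleanest cases and I would dispatch them first.

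**Cases forcing a shared rational zero (applying Lemma~\ref{lem1} via Pfaffians, or Lemma~\ref{lem2}).** Cases (iii)--(vii) each vanish the ``same'' small set of entries across several of the matrices $A,B,C,D$ simultaneously. The right way to read these is through the linear pencil $t_1A+t_2B+t_3C+t_4D$: when, say, the entries indexed by $\{1j\}$ for $j$ in some set vanish for every matrix, the corresponding entries of the pencil vanish identically in $t$, which forces a basis vector (or a coordinate subspace) to lie in the kernel of every member of the pencil, or forces one of the sub-Pfaffians $Q_i$ to lose a variable and hence become a rational quadratic form in fewer variables that factors or has a rational zero. Concretely, for (vii) the vanishing of $\{x_{12},x_{13}\}$ for all four matrices means the $1$--$2$ and $1$--$3$ entries of the pencil vanish identically, so the first row of the pencil is supported only on columns $4,5$; expanding the relevant $4\times 4$ Pfaffian shows $Q_i$ factors with a rational linear factor, triggering Lemma~\ref{lem2}. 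I expect each of (iii)--(vii) to admit such an argument, the precise Pfaffian to use being dictated by which row/column structure is annihilated.

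**Main obstacle and organization.** The genuine work --- and the step I expect to be the main obstacle --- is the bookkeeping of the Pfaffian expansions in cases (iii)--(vii): one must verify in each case that the specified vanishing really does force either a rank drop in a member of the pencil or an honest rational factorization/common zero, rather than merely making the forms ``look degenerate.'' Since the $4\times 4$ sub-Pfaffian $Q_i$ is a quadratic form whose coefficients are explicit small sums of products of the matrix entries, I would organize the proof by tabulating, for each case, which monomials of each $Q_i$ survive, and then identify the common rational zero $[t_1:t_2:t_3:t_4]\in\P^3(\Q)$ of the pencil (or the rational linear factor). Once such a rational zero is produced, Lemma~\ref{lem1}'s proof already shows it contradicts irreducibility, so reducibility follows immediately; the factorization cases fall to Lemma~\ref{lem2}. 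The cleanest exposition is thus to treat (i),(ii) by direct rank arguments and (iii)--(vii) by producing an explicit rational common zero of the Pfaffian system, invoking the two preceding lemmas uniformly.
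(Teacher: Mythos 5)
Your route is the paper's route: cases (i) and (ii) via Lemma~\ref{lem1} with $E=A$ (your rank computations there are correct), and the remaining cases via the sub-Pfaffians and Lemma~\ref{lem2}. Writing $m_{ij}$ for the $(i,j)$ entry of $M=t_1A+t_2B+t_3C+t_4D$, so that $Q_5=m_{12}m_{34}-m_{13}m_{24}+m_{14}m_{23}$, your case (vii) is correct ($Q_5=m_{14}m_{23}$), and (v) and (vi) work exactly as you predict: there $m_{12},m_{13},m_{14}$ (resp.\ $m_{12},m_{13},m_{23}$) are all scalar multiples of $t_4$, so $Q_5$ acquires the rational factor $t_4$. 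Case (iv) does \emph{not} give a factoring $Q_5$ in general (it can have the shape $t_1t_3-t_4^2$, an irreducible conic), but it does fall to Lemma~\ref{lem1}: choosing $r,s\in\Q$ not both zero with $ra_{34}+sb_{34}=0$ kills the entire upper-left $4\times4$ block of $rA+sB$, which is then supported on its fifth row and column and so has rank at most $2$. Since a rank-dropping rational combination is in your stated toolkit, (i), (ii), (iv), (v), (vi), (vii) are genuinely covered by your plan. (The paper's own proof asserts that $Q_5$ factors in all of (iii)--(vii); as literally stated this is accurate only for (v)--(vii).)

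The genuine gap is case (iii), which is precisely the situation you flag as ``looking degenerate'' without obviously being so: none of your three mechanisms applies literally. There $m_{12},m_{13},m_{14}$ lie in the span of $t_3,t_4$, so $Q_5=t_3L+t_4M$ for linear forms $L,M$; this is a general quadric containing the line $t_3=t_4=0$ and is typically irreducible (e.g.\ $t_1t_3+t_2t_4$), and the same holds for $Q_2,Q_3,Q_4$ and for rational combinations of the $Q_i$. No nontrivial rational combination of $A,B,C,D$ need have rank $\le 2$ (annihilating the first row of $tC+uD$ imposes four conditions on $(t,u)$), and there need be no rational common zero. What closes the case is one further observation: on the rational line $t_3=t_4=0$ the matrix $t_1A+t_2B$ has zero first row and column, hence has rank $\le 2$ exactly at the (at most two) roots of the binary quadratic form given by the Pfaffian of its lower-right $4\times4$ block. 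These roots are zeroes of $(A,B,C,D)$ defined over a field of degree at most $2$, so by the geometric characterization of reducibility stated in Section~2 (a zero defined over a field of degree less than five forces reducibility), $(A,B,C,D)$ is reducible; equivalently, they form a Galois-stable subset of size $\le 2$ of the five zeroes, contradicting the transitivity of the Galois action that irreducibility would entail. This is the same mechanism that powers the proofs of Lemmas~\ref{lem1} and~\ref{lem2}, but it is not a consequence of either as stated, so your write-up would need this additional step.
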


\begin{proof}
  In cases (i) and (ii), one sees that $A$ has rank $\leq 2$, and thus
  $(A,B,C,D)$ is reducible by Lemma~\ref{lem1}.
  In the remaining cases (iii)--(vii), one finds that $Q_5$ factors
  into rational linear factors, and thus the result in these cases
  follows from Lemma~\ref{lem2}.
\end{proof}

We are now ready to give an estimate on the number of 
irreducible elements in $\FF v$, on average, satisfying
$a_{12}=0$:

\begin{lemma}\label{hard}
Let $v$ take a random value in $H$ uniformly with respect to the measure
$|\Disc(v)|^{-1}\,dv$.  Then the expected number of 
irreducible elements $(A,B,C,D)\in\FF v$ such that 
$|\Disc(A,B,C,D)|< X$  and $a_{12}=0$
is $O(X^{39/40})$.
\end{lemma}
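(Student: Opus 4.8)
The plan is to apply the averaging identity~(\ref{avg}) to the set $S=\{x=(A,B,C,D)\in V_\Z:a_{12}=0\}$, rewriting the expected count as an integral over the Siegel domain $\bar N'A'\Lambda$, and then to estimate the inner lattice-point count using the geometry-of-numbers bound of Lemma~\ref{genbound}. The first point I would record is that $a_{12}$ is the lowest-weight coordinate of $V$ and is fixed by every element of $\bar N'$: since $A$ is skew-symmetric we have $a_{11}=0$, so the lower-triangular unipotent $n_5$ in $\SL_5(\R)$ gives $(n_5An_5^t)_{12}=a_{12}+u_7\,a_{11}=a_{12}$, while the lower-triangular unipotent in $\GL_4(\R)$ fixes the first component $A$ of the quadruple outright. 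Hence the hyperplane $\{a_{12}=0\}$ is $\bar N'$-invariant, and counting integer points of $\bar n(u)a(s)\lambda H$ with $a_{12}=0$ is the same as counting $\Z^{39}$-points in the image of the $39$-dimensional region $a(s)\lambda H\cap\{a_{12}=0\}$ under a triangular unipotent transformation of that hyperplane. Lemma~\ref{genbound}, applied within the hyperplane, then returns the count as a $39$-dimensional volume plus a boundary-projection error.

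For the main (volume) term I would compute the weight of $a_{12}$ under $a(s)\lambda$. Reading off the two tori, the first diagonal entry of the $\GL_4$-block is $s_1^{-3}s_2^{-1}s_3^{-1}$ and the product of the first two diagonal entries of the $\SL_5$-block is $s_4^{-3}s_5^{-6}s_6^{-4}s_7^{-2}$, so $a_{12}$ is scaled by
\[
w(a_{12})=\lambda\,s_1^{-3}s_2^{-1}s_3^{-1}s_4^{-3}s_5^{-6}s_6^{-4}s_7^{-2}.
\]
Since $a(s)\in\SL_4(\R)\times\SL_5(\R)$ preserves both $\Disc$ and Euclidean volume on $V$, while $\lambda$ scales $\Disc$ by $\lambda^{40}$, the slice of $\bar n(u)a(s)\lambda H$ cut out by $a_{12}=0$ has $39$-dimensional volume of order $\lambda^{39}\cdot s_1^{3}s_2^{1}s_3^{1}s_4^{3}s_5^{6}s_6^{4}s_7^{2}$, and the condition $|\Disc|<X$ confines the region to $\lambda\ll X^{1/40}$. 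Integrating this main term against the Haar weight of~(\ref{avg}), the $\lambda$-integral $\int^{X^{1/40}}\lambda^{39}\,d^\times\lambda\sim X^{39/40}$, while the $s$-integral carries the exponent vector obtained by adding $(3,1,1,3,6,4,2)$ to the Jacobian exponents $(-12,-8,-12,-20,-30,-30,-20)$, namely $(-9,-7,-11,-17,-24,-26,-18)$. As every entry is strictly negative, $\int_{s_i\ge c}s_1^{-9}s_2^{-7}s_3^{-11}s_4^{-17}s_5^{-24}s_6^{-26}s_7^{-18}\,d^\times s$ converges, so the volume term contributes $O(X^{39/40})$ as required.

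The step I expect to demand the most care is the uniform control of the error term $O(\max\{\Vol(\bar{\mathcal R}),1\})$ from Lemma~\ref{genbound} after integration over the non-compact Siegel domain. Each coordinate projection of the $39$-dimensional slice scales as $\lambda^{d}$ with $d\le 38$, so after the $\lambda$-integral it contributes at most $X^{38/40}=o(X^{39/40})$; the genuine difficulty is that for certain projections the accompanying $s$-integral need not converge as the $s_i\to\infty$. I would dispose of these in the same manner as the other cuspidal regions: in each sub-region of the Siegel domain where a projection integral threatens to diverge, the coordinates surviving the projection are precisely those whose vanishing appears in one of the sets of Lemma~\ref{red} (equivalently, they force some nontrivial rational combination of $A,B,C,D$ to have rank $\le 2$, or some rational combination of the sub-Pfaffians $Q_1,\dots,Q_5$ to split into rational linear factors). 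By Lemmas~\ref{lem1}--\ref{red}, every integer point lying in such a region is therefore reducible and contributes nothing to the irreducible count. Combining the convergent volume term with these vanishing error contributions yields the asserted bound $O(X^{39/40})$.
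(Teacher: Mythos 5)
Your setup --- the averaging identity, the $\bar N'$-invariance of $a_{12}$, the weight $w(a_{12})=\lambda s_1^{-3}s_2^{-1}s_3^{-1}s_4^{-3}s_5^{-6}s_6^{-4}s_7^{-2}$, and the convergent exponent vector $(-9,-7,-11,-17,-24,-26,-18)$ for the volume of the full slice --- is correct and matches what the paper's argument yields for the shallowest stratum of the cusp. But the final paragraph, where all the real difficulty of this lemma lives, contains a genuine gap. The error term in Lemma~\ref{genbound} is a bound by volumes of coordinate projections of the \emph{region}; it is not a count of lattice points whose omitted coordinates actually vanish. So when a projection's $s$-integral diverges, you cannot discard that contribution by declaring the underlying points reducible --- there are no ``underlying points'' with those coordinates equal to zero to which Lemmas~\ref{lem1}--\ref{red} could be applied. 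Moreover, the factual claim that every divergence-causing vanishing set is one of the configurations of Lemma~\ref{red} is false: for instance $\{a_{12},a_{13},b_{12}\}$ is not on that list (it is Subcase 3c of Table~1), and the paper exhibits 152 non-reducible vanishing configurations, some as deep as thirteen coordinates (Case~13), each of which supports irreducible integer points and must be estimated separately.

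The mechanism the paper actually uses, and which your proposal is missing, is the integer dichotomy: each coordinate of a lattice point is either $0$ or at least $1$ in absolute value. One therefore stratifies the integer points of the slice by the set $T_0$ of vanishing low-weight coordinates, prunes the strata forced reducible by Lemma~\ref{red}, and in each surviving stratum uses the fact that the \emph{nonzero} minimal-weight coordinates satisfy $1\leq |t|\leq Jw(t)$, hence $Jw(t)\gg 1$ for every $t\notin T_0$; only then does Lemma~\ref{genbound} give an error dominated by the volume $\prod_{t\in T\setminus T_0}Jw(t)$. Even after this, for a substantial fraction of the strata (all entries of Table~1 with a nontrivial ``Use factor'') the resulting $s$-integral still diverges, and one must multiply the integrand by a monomial $\pi$ in the coordinates known to be $\geq 1$, chosen so that every $s_i$-exponent becomes negative, at the quantified cost $X^{(40-\#T_0+\#\pi)/40}$. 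Checking that such a $\pi$ exists with $\#\pi<\#T_0$ in every non-reducible stratum is the combinatorial heart of the proof, and no shortcut through the reducibility lemmas replaces it.
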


\begin{proof}
As in~\cite{Bhargava5}, we divide the set of all $(A,B,C,D)\in V_\Z$
into a number of cases depending on which initial coordinates are zero
and which are nonzero.  These cases are described in the second column
of Table 1.  The vanishing conditions in the various subcases of
Case~$n+1$ are obtained by setting equal to 0---one at a time---each
variable that was assumed to be nonzero in Case~$n$.  If such a
resulting subcase satisfies the reducibility conditions of
Lemma~\ref{red}, it is not listed.  In this way, it becomes clear that
any irreducible element in $V_\Z$ must satisfy precisely
one of the conditions enumerated in the second column of Table 1.
In particular, there is no Case 14, because assuming any
nonzero variables in Case 13 to be zero immediately results in
reducibility by Lemma~\ref{red}.

Let $T$ denote the set of all forty variables
$a_{ij},b_{ij},c_{ij},d_{ij}$.  For a subcase $\CC$ of Table~1, we use
$T_0=T_0(\CC)$ to denote the set of variables in $T$ assumed to be 0
in Subcase~$\CC$, and $T_1$ to denote the set of variables in $T$
assumed to be nonzero.  

Each variable $t\in T$ has a {\it weight}, defined as follows. The
action of $a(s_1,s_2,\ldots,s_7)\cdot\lambda$ on $(A,B,C,D)\in
V$ causes each variable $t$ to multiply by a certain weight which we
denote by $w(t)$.  These weights $w(t)$ are evidently rational
functions in $\lambda,s_1,\ldots,s_7$.

Let $V(\CC)$ denote the set of $(A,B,C,D)\in V_\R$ such that
$(A,B,C,D)$ satisfies the vanishing and nonvanishing conditions of
Subcase~$\CC$.  For example, in Subcase~2a we have
$T_0(\mbox{2a})=\{a_{12},a_{13}\}$ and
$T_1(\mbox{2a})=\{a_{14},a_{23},b_{12}\}$; thus $V(\mbox{2a})$
denotes the set of all $(A,B,C,D)\in V_\Z$ such that $a_{12}=a_{13}=0$
but $a_{14},a_{23},b_{12}\neq 0$.

For each subcase $\CC$ of Case $n$ ($n>0$), we wish to show that
$N(V(\CC);X)$, as defined by (\ref{nsx}), is $O(X^{39/40})$.  
Since $N'(a)$ is absolutely bounded, the equality (\ref{avgS}) implies that
\begin{equation}\label{estv0s}
N^*(V(\CC);X)\ll
\int_{\lambda=c'}^{X^{1/40}} \int_{s_1,s_2,\ldots,s_7=c}^\infty
\sigma(V(\CC))
\, s_1^{-12}s_2^{-8}s_3^{-12}s_4^{-20}s_5^{-30}s_6^{-30}s_7^{-20} \,
d^\times\! s \,d^\times\!\lambda,
\end{equation}
where $\sigma(V(\CC))$ denotes the number of integer points
in the region $H(u,s,\lambda,X)$ that also satisfy the conditions 
\begin{equation}\label{cond}
\mbox{$t=0$ for $t\in T_0$ and $|t|\geq 1$ for $t\in T_1$}.
\end{equation}

Now for an element $(A,B,C,D)\in H(u,s,\lambda,X)$, we evidently have
\begin{equation}\label{condt}
|t|\leq J{w(t)}
\end{equation}
and therefore the number of integer points in $H(u,s,\lambda,X)$
satisfying (\ref{cond}) will be nonzero only if we have
\begin{equation}\label{condt1}
J{w(t)}\geq 1
\end{equation}
for all weights $w(t)$ such that $t\in T_1$.  Now the sets $T_1$ in
each subcase of Table~1 have been chosen to be precisely the set of
variables having the minimal weights $w(t)$ among the variables $t\in
T\setminus T_0$ (by ``minimal weight'' in $T\setminus T_0$, we mean
there is no other variable $t\in T\setminus T_0$ with weight having
smaller exponents for all parameters $\lambda, s_1,s_2,\ldots,s_7$).
Thus if the condition (\ref{condt1}) holds for all weights $w(t)$
corresponding to $t\in T_1$, then---by the very choice of $T_1$---we
will also have $Jw(t)\gg 1$ for all weights $w(t)$ such that $t\in
T\setminus T_0$.


Therefore, if the region $\H=\{(A,B,C,D)\in
H(u,s,\lambda,X):t=0\;\;\forall t\in T_0;\;\; |t|\geq 1\;\; \forall
t\in T_1\}$ contains an integer point, then (\ref{condt1}) and
Lemma~\ref{genbound} together imply that the number of integer points
in $\H$ is $O(\Vol(\H))$, since the volumes of all the projections of
$u^{-1}\H$ will in that case also be $O(\Vol(\H))$.  
Now clearly
\[\Vol(\H)=O\Bigl(J^{40-|T_0|}\prod_{t\in T\setminus T_0} w(t)\Bigr),\]
so we obtain
\begin{equation}\label{estv1s}
N(V(\CC);X)\ll
\int_{\lambda=c'}^{X^{1/40}} \int_{s_1,s_2,\ldots,s_7=c}^\infty
\prod_{t\in T\setminus T_0} w(t)
\,\, s_1^{-12}s_2^{-8}s_3^{-12}s_4^{-20}s_5^{-30}s_6^{-30}s_7^{-20} \,\,\,
d^\times\! s \,d^\times\!\lambda.
\end{equation}

The latter integral can be explicitly carried out for each of the
subcases in Table~1.  It will suffice, however, to have a simple
estimate of the form $O(X^r)$, with $r<1$, for the integral
corresponding to each subcase.  For example, if the total exponent of
$s_i$ in (\ref{estv1s}) is negative for all $i$ in $\{1,\ldots,7\}$,
then it is clear that the resulting integral will be at most
$O(X^{(40-|T_0|)/40})$ in value.  This condition holds for many of the
subcases in Table 1 (indicated in the fourth column by ``-''),
immediately yielding the estimates given in the third column.

For cases where this negative exponent condition does not hold, the
estimate given in the third column can be obtained as follows.  The
factor $\pi$ given in the fourth column is a product of variables in
$T_1$, and so it is at least one in absolute value.  The integrand in
(\ref{estv1s}) may thus be multiplied by $\pi$ without harm, and the
estimate (\ref{estv1s}) will remain true; we may then apply the
inequalities (\ref{condt}) to each of the variables in $\pi$, yielding
\begin{equation}\label{estv2s}
N(V(\CC);X)\ll
\int_{\lambda=c'}^{X^{1/40}} \int_{s_1,s_2,\ldots,s_7=c}^\infty
\prod_{t\in T\setminus T_0} w(t)\;w(\pi)
\,\, s_1^{-12}s_2^{-8}s_3^{-12}s_4^{-20}s_5^{-30}s_6^{-30}s_7^{-20} \,\,\,
d^\times\! s \,d^\times\!\lambda.
\end{equation}
where we extend the notation $w$ multiplicatively, i.e.,
$w(ab)=w(a)w(b)$.  In each subcase of Table~1, we have chosen the
factor $\pi$ so that the total exponent of each $s_i$ in
(\ref{estv2s}) is negative.  Thus we obtain from (\ref{estv2s}) that
$N(V(\CC);X)=O(X^{(40-\#T_0(\CC)+\#\pi)/40}$, where $\#\pi$ denotes the
total number of variables of $T$ appearing in $\pi$ (counted with
multiplicity), and this is precisely the estimate given in the third
column of Table~1.  In every subcase, aside from Case~0, we see that
$40-\#T_0+\#\pi<40$, as desired.
\end{proof}

\begin{tabular}{|r|l|c|c|}\hline
Case & The set $S\subset V_\Z$ defined by & $N(S;X)\ll$ & Use factor \\ 
\hline\hline 
%
%
0. & ${a_{12}}\neq0\,$ & $X^{40/40}$ &  -  \\
\hline
1. & ${a_{12}}=0\,;$ & $X^{39/40}$ &  -  \\
  & ${a_{13}, b_{12}}\neq 0$ &  &  \\ \hline
2a. & ${a_{12}, a_{13}}=0\,;$ & $X^{38/40}$ &  -  \\
  & ${a_{14}, a_{23}, b_{12}}\neq 0$ &  &  \\ \hline
2b. & ${a_{12}, b_{12}}=0\,;$ & $X^{38/40}$ &  -  \\
  & ${a_{13}, c_{12}}\neq 0$ &  &  \\ \hline
3a. & ${a_{12}, a_{13}, a_{14}}=0\,;$ & $X^{37/40}$ &  -  \\
  & ${a_{15}, a_{23}, b_{12}}\neq 0$ &  &  \\ \hline
3b. & ${a_{12}, a_{13}, a_{23}}=0\,;$ & $X^{37/40}$ &  -  \\
  & ${a_{14}, b_{12}}\neq 0$ &  &  \\ \hline
3c. & ${a_{12}, a_{13}, b_{12}}=0\,;$ & $X^{37/40}$ &  -  \\
  & ${a_{14}, a_{23}, b_{13}, c_{12}}\neq 0$ &  &  \\ \hline
3d. & ${a_{12}, b_{12}, c_{12}}=0\,;$ & $X^{37/40}$ &  -  \\
  & ${a_{13}, d_{12}}\neq 0$ &  &  \\ \hline
4a. & ${a_{12}, a_{13}, a_{14}, a_{15}}=0\,;$ & $X^{37/40}$ & $ a_{23}$ \\
  & ${a_{23}, b_{12}}\neq 0$ &  &  \\ \hline
4b. & ${a_{12}, a_{13}, a_{14}, a_{23}}=0\,;$ & $X^{37/40}$ & $ a_{24}$ \\
  & ${a_{15}, a_{24}, b_{12}}\neq 0$ &  &  \\ \hline
4c. & ${a_{12}, a_{13}, a_{14}, b_{12}}=0\,;$ & $X^{36/40}$ &  -  \\
  & ${a_{15}, a_{23}, b_{13}, c_{12}}\neq 0$ &  &  \\ \hline
4d. & ${a_{12}, a_{13}, a_{23}, b_{12}}=0\,;$ & $X^{36/40}$ &  -  \\
  & ${a_{14}, b_{13}, c_{12}}\neq 0$ &  &  \\ \hline
4e. & ${a_{12}, a_{13}, b_{12}, b_{13}}=0\,;$ & $X^{36/40}$ &  -  \\
  & ${a_{14}, a_{23}, c_{12}}\neq 0$ &  &  \\ \hline
4f. & ${a_{12}, a_{13}, b_{12}, c_{12}}=0\,;$ & $X^{36/40}$ &  -  \\
  & ${a_{14}, a_{23}, b_{13}, d_{12}}\neq 0$ &  &  \\ \hline
4g. & ${a_{12}, b_{12}, c_{12}, d_{12}}=0\,;$ & $X^{36/40}$ &  -  \\
  & ${a_{13}}\neq 0$ &  &  \\ \hline
5a. & ${a_{12}, a_{13}, a_{14}, a_{15}, a_{23}}=0\,;$ & $X^{37/40}$ &
    $ a_{24}^2 $ \\
  & ${a_{24}, b_{12}}\neq 0$ &  &  \\ \hline
5b. & ${a_{12}, a_{13}, a_{14}, a_{15}, b_{12}}=0\,;$ & $X^{35/40}$ &  -  \\
  & ${a_{23}, b_{13}, c_{12}}\neq 0$ &  &  \\ \hline
5c. & ${a_{12}, a_{13}, a_{14}, a_{23}, a_{24}}=0\,;$ & $X^{37/40}$ &
    $ a_{34}^2 $ \\
  & ${a_{15}, a_{34}, b_{12}}\neq 0$ &  &  \\ \hline
5d. & ${a_{12}, a_{13}, a_{14}, a_{23}, b_{12}}=0\,;$ & $X^{35/40}$ &  -  \\
  & ${a_{15}, a_{24}, b_{13}, c_{12}}\neq 0$ &  &  \\ \hline
\end{tabular}

\vspace{.15in}
\begin{center}
{\bf Table 1.} Subcases 0--5d.
\end{center}

\begin{tabular}{|r|l|c|c|}\hline
Case & The set $S\subset V_\Z$ defined by & $N(S;X)\ll$ & Use factor \\ 
\hline\hline 
5e. & ${a_{12}, a_{13}, a_{14}, b_{12}, b_{13}}=0\,;$ & $X^{35/40}$ &  -  \\
  & ${a_{15}, a_{23}, b_{14}, c_{12}}\neq 0$ &  &  \\ \hline
5f. & ${a_{12}, a_{13}, a_{14}, b_{12}, c_{12}}=0\,;$ & $X^{35/40}$ &  -  \\
  & ${a_{15}, a_{23}, b_{13}, d_{12}}\neq 0$ &  &  \\ \hline
5g. & ${a_{12}, a_{13}, a_{23}, b_{12}, b_{13}}=0\,;$ & $X^{35/40}$ &  -  \\
  & ${a_{14}, b_{23}, c_{12}}\neq 0$ &  &  \\ \hline
5h. & ${a_{12}, a_{13}, a_{23}, b_{12}, c_{12}}=0\,;$ & $X^{35/40}$ &  -  \\
  & ${a_{14}, b_{13}, d_{12}}\neq 0$ &  &  \\ \hline
5i. & ${a_{12}, a_{13}, b_{12}, b_{13}, c_{12}}=0\,;$ & $X^{35/40}$ &  -  \\
  & ${a_{14}, a_{23}, c_{13}, d_{12}}\neq 0$ &  &  \\ \hline
5j. & ${a_{12}, a_{13}, b_{12}, c_{12}, d_{12}}=0\,;$ & $X^{35/40}$ &  -  \\
  & ${a_{14}, a_{23}, b_{13}}\neq 0$ &  &  \\ \hline
6a. & ${a_{12}, a_{13}, a_{14}, a_{15}, a_{23}, a_{24}}=0\,;$ & $X^{37/40}$ &
    $ a_{34}^3 $ \\
  & ${a_{25}, a_{34}, b_{12}}\neq 0$ &  &  \\ \hline
6b. & ${a_{12}, a_{13}, a_{14}, a_{15}, a_{23}, b_{12}}=0\,;$ & $X^{35/40}$ &
    $ a_{24}$ \\
  & ${a_{24}, b_{13}, c_{12}}\neq 0$ &  &  \\ \hline
6c. & ${a_{12}, a_{13}, a_{14}, a_{15}, b_{12}, b_{13}}=0\,;$ & $X^{34/40}$ &
     -  \\
  & ${a_{23}, b_{14}, c_{12}}\neq 0$ &  &  \\ \hline
6d. & ${a_{12}, a_{13}, a_{14}, a_{15}, b_{12}, c_{12}}=0\,;$ & $X^{34/40}$ &
     -  \\
  & ${a_{23}, b_{13}, d_{12}}\neq 0$ &  &  \\ \hline
6e. & ${a_{12}, a_{13}, a_{14}, a_{23}, a_{24}, b_{12}}=0\,;$ & $X^{35/40}$ &
    $ a_{34}$ \\
  & ${a_{15}, a_{34}, b_{13}, c_{12}}\neq 0$ &  &  \\ \hline
6f. & ${a_{12}, a_{13}, a_{14}, a_{23}, b_{12}, b_{13}}=0\,;$ & $X^{34/40}$ &
     -  \\
  & ${a_{15}, a_{24}, b_{14}, b_{23}, c_{12}}\neq 0$ &  &  \\ \hline
6g. & ${a_{12}, a_{13}, a_{14}, a_{23}, b_{12}, c_{12}}=0\,;$ & $X^{34/40}$ &
     -  \\
  & ${a_{15}, a_{24}, b_{13}, d_{12}}\neq 0$ &  &  \\ \hline
6h. & ${a_{12}, a_{13}, a_{14}, b_{12}, b_{13}, b_{14}}=0\,;$ & $X^{34/40}$ &
     -  \\
  & ${a_{15}, a_{23}, c_{12}}\neq 0$ &  &  \\ \hline
6i. & ${a_{12}, a_{13}, a_{14}, b_{12}, b_{13}, c_{12}}=0\,;$ & $X^{34/40}$ &
     -  \\
  & ${a_{15}, a_{23}, b_{14}, c_{13}, d_{12}}\neq 0$ &  &  \\ \hline
6j. & ${a_{12}, a_{13}, a_{14}, b_{12}, c_{12}, d_{12}}=0\,;$ & $X^{34/40}$ &
     -  \\
  & ${a_{15}, a_{23}, b_{13}}\neq 0$ &  &  \\ \hline
6k. & ${a_{12}, a_{13}, a_{23}, b_{12}, b_{13}, b_{23}}=0\,;$ & $X^{34/40}$ &
     -  \\
  & ${a_{14}, c_{12}}\neq 0$ &  &  \\ \hline
6l. & ${a_{12}, a_{13}, a_{23}, b_{12}, b_{13}, c_{12}}=0\,;$ & $X^{34/40}$ &
     -  \\
  & ${a_{14}, b_{23}, c_{13}, d_{12}}\neq 0$ &  &  \\ \hline
6m. & ${a_{12}, a_{13}, a_{23}, b_{12}, c_{12}, d_{12}}=0\,;$ & $X^{34/40}$ &
     -  \\
  & ${a_{14}, b_{13}}\neq 0$ &  &  \\ \hline
\end{tabular}

\vspace{.15in}
\begin{center}
{\bf Table 1.} Subcases 5e--6m.
\end{center}

\begin{tabular}{|r|l|c|c|}\hline
Case & The set $S\subset V_\Z$ defined by & $N(S;X)\ll$ & Use factor \\ 
\hline\hline 
6n. & ${a_{12}, a_{13}, b_{12}, b_{13}, c_{12}, c_{13}}=0\,;$ & $X^{34/40}$ &
     -  \\
  & ${a_{14}, a_{23}, d_{12}}\neq 0$ &  &  \\ \hline
6o. & ${a_{12}, a_{13}, b_{12}, b_{13}, c_{12}, d_{12}}=0\,;$ & $X^{34/40}$ &
     -  \\
  & ${a_{14}, a_{23}, c_{13}}\neq 0$ &  &  \\ \hline
7a. & ${a_{12}, a_{13}, a_{14}, a_{15}, a_{23}, a_{24}, b_{12}}=0\,;$ & $X^{35
    /40}$ & $ a_{34}^2 $ \\
  & ${a_{25}, a_{34}, b_{13}, c_{12}}\neq 0$ &  &  \\ \hline
7b. & ${a_{12}, a_{13}, a_{14}, a_{15}, a_{23}, b_{12}, b_{13}}=0\,;$ & $X^{34
    /40}$ & $ a_{24}$ \\
  & ${a_{24}, b_{14}, b_{23}, c_{12}}\neq 0$ &  &  \\ \hline
7c. & ${a_{12}, a_{13}, a_{14}, a_{15}, a_{23}, b_{12}, c_{12}}=0\,;$ & $X^{34
    /40}$ & $ a_{24}$ \\
  & ${a_{24}, b_{13}, d_{12}}\neq 0$ &  &  \\ \hline
7d. & ${a_{12}, a_{13}, a_{14}, a_{15}, b_{12}, b_{13}, b_{14}}=0\,;$ & $X^{34
    /40}$ & $ b_{15}$ \\
  & ${a_{23}, b_{15}, c_{12}}\neq 0$ &  &  \\ \hline
7e. & ${a_{12}, a_{13}, a_{14}, a_{15}, b_{12}, b_{13}, c_{12}}=0\,;$ & $X^{34
    /40}$ & $ d_{12}$ \\
  & ${a_{23}, b_{14}, c_{13}, d_{12}}\neq 0$ &  &  \\ \hline
7f. & ${a_{12}, a_{13}, a_{14}, a_{15}, b_{12}, c_{12}, d_{12}}=0\,;$ & $X^{34
    /40}$ & $ b_{13}$ \\
  & ${a_{23}, b_{13}}\neq 0$ &  &  \\ \hline
7g. & ${a_{12}, a_{13}, a_{14}, a_{23}, a_{24}, b_{12}, b_{13}}=0\,;$ & $X^{34
    /40}$ & $ a_{34}$ \\
  & ${a_{15}, a_{34}, b_{14}, b_{23}, c_{12}}\neq 0$ &  &  \\ \hline
7h. & ${a_{12}, a_{13}, a_{14}, a_{23}, a_{24}, b_{12}, c_{12}}=0\,;$ & $X^{34
    /40}$ & $ a_{34}$ \\
  & ${a_{15}, a_{34}, b_{13}, d_{12}}\neq 0$ &  &  \\ \hline
7i. & ${a_{12}, a_{13}, a_{14}, a_{23}, b_{12}, b_{13}, b_{14}}=0\,;$ & $X^{33
    /40}$ &  -  \\
  & ${a_{15}, a_{24}, b_{23}, c_{12}}\neq 0$ &  &  \\ \hline
7j. & ${a_{12}, a_{13}, a_{14}, a_{23}, b_{12}, b_{13}, b_{23}}=0\,;$ & $X^{33
    /40}$ &  -  \\
  & ${a_{15}, a_{24}, b_{14}, c_{12}}\neq 0$ &  &  \\ \hline
7k. & ${a_{12}, a_{13}, a_{14}, a_{23}, b_{12}, b_{13}, c_{12}}=0\,;$ & $X^{33
    /40}$ &  -  \\
  & ${a_{15}, a_{24}, b_{14}, b_{23}, c_{13}, d_{12}}\neq 0$ &  &  \\ \hline
7l. & ${a_{12}, a_{13}, a_{14}, a_{23}, b_{12}, c_{12}, d_{12}}=0\,;$ & $X^{33
    /40}$ &  -  \\
  & ${a_{15}, a_{24}, b_{13}}\neq 0$ &  &  \\ \hline
7m. & ${a_{12}, a_{13}, a_{14}, b_{12}, b_{13}, b_{14}, c_{12}}=0\,;$ & $X^{34
    /40}$ & $ d_{12}$ \\
  & ${a_{15}, a_{23}, c_{13}, d_{12}}\neq 0$ &  &  \\ \hline
7n. & ${a_{12}, a_{13}, a_{14}, b_{12}, b_{13}, c_{12}, c_{13}}=0\,;$ & $X^{34
    /40}$ & $ d_{12}$ \\
  & ${a_{15}, a_{23}, b_{14}, d_{12}}\neq 0$ &  &  \\ \hline
7o. & ${a_{12}, a_{13}, a_{14}, b_{12}, b_{13}, c_{12}, d_{12}}=0\,;$ & $X^{34
    /40}$ & $ c_{13}$ \\
  & ${a_{15}, a_{23}, b_{14}, c_{13}}\neq 0$ &  &  \\ \hline
7p. & ${a_{12}, a_{13}, a_{23}, b_{12}, b_{13}, b_{23}, c_{12}}=0\,;$ & $X^{33
    /40}$ &  -  \\
  & ${a_{14}, c_{13}, d_{12}}\neq 0$ &  &  \\ \hline
7q. & ${a_{12}, a_{13}, a_{23}, b_{12}, b_{13}, c_{12}, c_{13}}=0\,;$ & $X^{33
    /40}$ &  -  \\
  & ${a_{14}, b_{23}, d_{12}}\neq 0$ &  &  \\ \hline
\end{tabular}

\vspace{.15in}
\begin{center}
{\bf Table 1.} Subcases 6n--7q.
\end{center}

\begin{tabular}{|r|l|c|c|}\hline
Case & The set $S\subset V_\Z$ defined by & $N(S;X)\ll$ & Use factor \\ 
\hline\hline 
7r. & ${a_{12}, a_{13}, a_{23}, b_{12}, b_{13}, c_{12}, d_{12}}=0\,;$ & $X^{33
    /40}$ &  -  \\
  & ${a_{14}, b_{23}, c_{13}}\neq 0$ &  &  \\ \hline
7s. & ${a_{12}, a_{13}, b_{12}, b_{13}, c_{12}, c_{13}, d_{12}}=0\,;$ & $X^{34
    /40}$ & $ d_{13}$ \\
  & ${a_{14}, a_{23}, d_{13}}\neq 0$ &  &  \\ \hline
8a. & ${a_{12}, a_{13}, a_{14}, a_{15}, a_{23}, a_{24}, b_{12}, b_{13}}
    =0\,;$ & $X^{34/40}$ & $ a_{34}^2 $ \\
  & ${a_{25}, a_{34}, b_{14}, b_{23}, c_{12}}\neq 0$ &  &  \\ \hline
8b. & ${a_{12}, a_{13}, a_{14}, a_{15}, a_{23}, a_{24}, b_{12}, c_{12}}
    =0\,;$ & $X^{34/40}$ & $ a_{25}a_{34}$ \\
  & ${a_{25}, a_{34}, b_{13}, d_{12}}\neq 0$ &  &  \\ \hline
8c. & ${a_{12}, a_{13}, a_{14}, a_{15}, a_{23}, b_{12}, b_{13}, b_{14}}
    =0\,;$ & $X^{34/40}$ & $ a_{24}b_{15}$ \\
  & ${a_{24}, b_{15}, b_{23}, c_{12}}\neq 0$ &  &  \\ \hline
8d. & ${a_{12}, a_{13}, a_{14}, a_{15}, a_{23}, b_{12}, b_{13}, b_{23}}
    =0\,;$ & $X^{33/40}$ & $ a_{24}$ \\
  & ${a_{24}, b_{14}, c_{12}}\neq 0$ &  &  \\ \hline
8e. & ${a_{12}, a_{13}, a_{14}, a_{15}, a_{23}, b_{12}, b_{13}, c_{12}}
    =0\,;$ & $X^{34/40}$ & $ a_{24}d_{12}$ \\
  & ${a_{24}, b_{14}, b_{23}, c_{13}, d_{12}}\neq 0$ &  &  \\ \hline
8f. & ${a_{12}, a_{13}, a_{14}, a_{15}, a_{23}, b_{12}, c_{12}, d_{12}}
    =0\,;$ & $X^{34/40}$ & $ a_{24}b_{13}$ \\
  & ${a_{24}, b_{13}}\neq 0$ &  &  \\ \hline
8g. & ${a_{12}, a_{13}, a_{14}, a_{15}, b_{12}, b_{13}, b_{14}, c_{12}}
    =0\,;$ & $X^{34/40}$ & $ b_{15}d_{12}$ \\
  & ${a_{23}, b_{15}, c_{13}, d_{12}}\neq 0$ &  &  \\ \hline
8h. & ${a_{12}, a_{13}, a_{14}, a_{15}, b_{12}, b_{13}, c_{12}, c_{13}}
    =0\,;$ & $X^{34/40}$ & $ b_{14}d_{12}$ \\
  & ${a_{23}, b_{14}, d_{12}}\neq 0$ &  &  \\ \hline
8i. & ${a_{12}, a_{13}, a_{14}, a_{15}, b_{12}, b_{13}, c_{12}, d_{12}}
    =0\,;$ & $X^{34/40}$ & $ c_{13}^2 $ \\
  & ${a_{23}, b_{14}, c_{13}}\neq 0$ &  &  \\ \hline
8j. & ${a_{12}, a_{13}, a_{14}, a_{23}, a_{24}, b_{12}, b_{13}, b_{14}}
    =0\,;$ & $X^{33/40}$ & $ a_{34}$ \\
  & ${a_{15}, a_{34}, b_{23}, c_{12}}\neq 0$ &  &  \\ \hline
8k. & ${a_{12}, a_{13}, a_{14}, a_{23}, a_{24}, b_{12}, b_{13}, b_{23}}
    =0\,;$ & $X^{33/40}$ & $ a_{34}$ \\
  & ${a_{15}, a_{34}, b_{14}, c_{12}}\neq 0$ &  &  \\ \hline
8l. & ${a_{12}, a_{13}, a_{14}, a_{23}, a_{24}, b_{12}, b_{13}, c_{12}}
    =0\,;$ & $X^{33/40}$ & $ a_{34}$ \\
  & ${a_{15}, a_{34}, b_{14}, b_{23}, c_{13}, d_{12}}\neq 0$ &  &  \\ \hline
8m. & ${a_{12}, a_{13}, a_{14}, a_{23}, a_{24}, b_{12}, c_{12}, d_{12}}
    =0\,;$ & $X^{33/40}$ & $ a_{15}$ \\
  & ${a_{15}, a_{34}, b_{13}}\neq 0$ &  &  \\ \hline
8n. & ${a_{12}, a_{13}, a_{14}, a_{23}, b_{12}, b_{13}, b_{14}, b_{23}}
    =0\,;$ & $X^{33/40}$ & $ a_{24}$ \\
  & ${a_{15}, a_{24}, c_{12}}\neq 0$ &  &  \\ \hline
8o. & ${a_{12}, a_{13}, a_{14}, a_{23}, b_{12}, b_{13}, b_{14}, c_{12}}
    =0\,;$ & $X^{32/40}$ &  -  \\
  & ${a_{15}, a_{24}, b_{23}, c_{13}, d_{12}}\neq 0$ &  &  \\ \hline
8p. & ${a_{12}, a_{13}, a_{14}, a_{23}, b_{12}, b_{13}, b_{23}, c_{12}}
    =0\,;$ & $X^{32/40}$ &  -  \\
  & ${a_{15}, a_{24}, b_{14}, c_{13}, d_{12}}\neq 0$ &  &  \\ \hline
8q. & ${a_{12}, a_{13}, a_{14}, a_{23}, b_{12}, b_{13}, c_{12}, c_{13}}
    =0\,;$ & $X^{32/40}$ &  -  \\
  & ${a_{15}, a_{24}, b_{14}, b_{23}, d_{12}}\neq 0$ &  &  \\ \hline
\end{tabular}

\vspace{.15in}
\begin{center}
{\bf Table 1.} Subcases 7r--8q.
\end{center}

\begin{tabular}{|r|l|c|c|}\hline
Case & The set $S\subset V_\Z$ defined by & $N(S;X)\ll$ & Use factor \\ 
\hline\hline 
8r. & ${a_{12}, a_{13}, a_{14}, a_{23}, b_{12}, b_{13}, c_{12}, d_{12}}
    =0\,;$ & $X^{32/40}$ &  -  \\
  & ${a_{15}, a_{24}, b_{14}, b_{23}, c_{13}}\neq 0$ &  &  \\ \hline
8s. & ${a_{12}, a_{13}, a_{14}, b_{12}, b_{13}, b_{14}, c_{12}, c_{13}}
    =0\,;$ & $X^{34/40}$ & $ c_{14}d_{12}$ \\
  & ${a_{15}, a_{23}, c_{14}, d_{12}}\neq 0$ &  &  \\ \hline
8t. & ${a_{12}, a_{13}, a_{14}, b_{12}, b_{13}, b_{14}, c_{12}, d_{12}}
    =0\,;$ & $X^{34/40}$ & $ c_{13}^2 $ \\
  & ${a_{15}, a_{23}, c_{13}}\neq 0$ &  &  \\ \hline
8u. & ${a_{12}, a_{13}, a_{14}, b_{12}, b_{13}, c_{12}, c_{13}, d_{12}}
    =0\,;$ & $X^{34/40}$ & $ d_{13}^2 $ \\
  & ${a_{15}, a_{23}, b_{14}, d_{13}}\neq 0$ &  &  \\ \hline
8v. & ${a_{12}, a_{13}, a_{23}, b_{12}, b_{13}, b_{23}, c_{12}, c_{13}}
    =0\,;$ & $X^{33/40}$ & $ d_{12}$ \\
  & ${a_{14}, c_{23}, d_{12}}\neq 0$ &  &  \\ \hline
8w. & ${a_{12}, a_{13}, a_{23}, b_{12}, b_{13}, b_{23}, c_{12}, d_{12}}
    =0\,;$ & $X^{33/40}$ & $ c_{13}$ \\
  & ${a_{14}, c_{13}}\neq 0$ &  &  \\ \hline
8x. & ${a_{12}, a_{13}, a_{23}, b_{12}, b_{13}, c_{12}, c_{13}, d_{12}}
    =0\,;$ & $X^{33/40}$ & $ d_{13}$ \\
  & ${a_{14}, b_{23}, d_{13}}\neq 0$ &  &  \\ \hline
9a. & ${a_{12}, a_{13}, a_{14}, a_{15}, a_{23}, a_{24}, b_{12}, b_{13},
     b_{14}}=0\,;$ & $X^{34/40}$ & $ a_{34}^2 b_{15}$ \\
  & ${a_{25}, a_{34}, b_{15}, b_{23}, c_{12}}\neq 0$ &  &  \\ \hline
9b. & ${a_{12}, a_{13}, a_{14}, a_{15}, a_{23}, a_{24}, b_{12}, b_{13},
     b_{23}}=0\,;$ & $X^{33/40}$ & $ a_{34}^2 $ \\
  & ${a_{25}, a_{34}, b_{14}, c_{12}}\neq 0$ &  &  \\ \hline
9c. & ${a_{12}, a_{13}, a_{14}, a_{15}, a_{23}, a_{24}, b_{12}, b_{13},
     c_{12}}=0\,;$ & $X^{34/40}$ & $ a_{34}^2 d_{12}$ \\
  & ${a_{25}, a_{34}, b_{14}, b_{23}, c_{13}, d_{12}}\neq 0$ &  &  \\ \hline
9d. & ${a_{12}, a_{13}, a_{14}, a_{15}, a_{23}, a_{24}, b_{12}, c_{12},
     d_{12}}=0\,;$ & $X^{34/40}$ & $ a_{25}^2 b_{13}$ \\
  & ${a_{25}, a_{34}, b_{13}}\neq 0$ &  &  \\ \hline
9e. & ${a_{12}, a_{13}, a_{14}, a_{15}, a_{23}, b_{12}, b_{13}, b_{14},
     b_{23}}=0\,;$ & $X^{33/40}$ & $ a_{24}b_{15}$ \\
  & ${a_{24}, b_{15}, c_{12}}\neq 0$ &  &  \\ \hline
9f. & ${a_{12}, a_{13}, a_{14}, a_{15}, a_{23}, b_{12}, b_{13}, b_{14},
     c_{12}}=0\,;$ & $X^{34/40}$ & $ a_{24}b_{15}d_{12}$ \\
  & ${a_{24}, b_{15}, b_{23}, c_{13}, d_{12}}\neq 0$ &  &  \\ \hline
9g. & ${a_{12}, a_{13}, a_{14}, a_{15}, a_{23}, b_{12}, b_{13}, b_{23},
     c_{12}}=0\,;$ & $X^{31/40}$ &  -  \\
  & ${a_{24}, b_{14}, c_{13}, d_{12}}\neq 0$ &  &  \\ \hline
9h. & ${a_{12}, a_{13}, a_{14}, a_{15}, a_{23}, b_{12}, b_{13}, c_{12},
     c_{13}}=0\,;$ & $X^{34/40}$ & $ a_{24}b_{14}d_{12}$ \\
  & ${a_{24}, b_{14}, b_{23}, d_{12}}\neq 0$ &  &  \\ \hline
9i. & ${a_{12}, a_{13}, a_{14}, a_{15}, a_{23}, b_{12}, b_{13}, c_{12},
     d_{12}}=0\,;$ & $X^{34/40}$ & $ a_{24}c_{13}^2 $ \\
  & ${a_{24}, b_{14}, b_{23}, c_{13}}\neq 0$ &  &  \\ \hline
9j. & ${a_{12}, a_{13}, a_{14}, a_{15}, b_{12}, b_{13}, b_{14}, c_{12},
     c_{13}}=0\,;$ & $X^{34/40}$ & $ b_{15}c_{14}d_{12}$ \\
  & ${a_{23}, b_{15}, c_{14}, d_{12}}\neq 0$ &  &  \\ \hline
9k. & ${a_{12}, a_{13}, a_{14}, a_{15}, b_{12}, b_{13}, b_{14}, c_{12},
     d_{12}}=0\,;$ & $X^{34/40}$ & $ b_{15}c_{13}^2 $ \\
  & ${a_{23}, b_{15}, c_{13}}\neq 0$ &  &  \\ \hline
9l. & ${a_{12}, a_{13}, a_{14}, a_{15}, b_{12}, b_{13}, c_{12}, c_{13},
     d_{12}}=0\,;$ & $X^{34/40}$ & $ b_{14}d_{13}^2 $ \\
  & ${a_{23}, b_{14}, d_{13}}\neq 0$ &  &  \\ \hline
\end{tabular}

\vspace{.15in}
\begin{center}
{\bf Table 1.} Subcases 8r--9l.
\end{center}

\begin{tabular}{|r|l|c|c|}\hline
Case & The set $S\subset V_\Z$ defined by & $N(S;X)\ll$ & Use factor \\ 
\hline\hline 
9m. & ${a_{12}, a_{13}, a_{14}, a_{23}, a_{24}, b_{12}, b_{13}, b_{14},
     b_{23}}=0\,;$ & $X^{33/40}$ & $ a_{34}b_{24}$ \\
  & ${a_{15}, a_{34}, b_{24}, c_{12}}\neq 0$ &  &  \\ \hline
9n. & ${a_{12}, a_{13}, a_{14}, a_{23}, a_{24}, b_{12}, b_{13}, b_{14},
     c_{12}}=0\,;$ & $X^{31/40}$ &  -  \\
  & ${a_{15}, a_{34}, b_{23}, c_{13}, d_{12}}\neq 0$ &  &  \\ \hline
9o. & ${a_{12}, a_{13}, a_{14}, a_{23}, a_{24}, b_{12}, b_{13}, b_{23},
     c_{12}}=0\,;$ & $X^{31/40}$ &  -  \\
  & ${a_{15}, a_{34}, b_{14}, c_{13}, d_{12}}\neq 0$ &  &  \\ \hline
9p. & ${a_{12}, a_{13}, a_{14}, a_{23}, a_{24}, b_{12}, b_{13}, c_{12},
     c_{13}}=0\,;$ & $X^{31/40}$ &  -  \\
  & ${a_{15}, a_{34}, b_{14}, b_{23}, d_{12}}\neq 0$ &  &  \\ \hline
9q. & ${a_{12}, a_{13}, a_{14}, a_{23}, a_{24}, b_{12}, b_{13}, c_{12},
     d_{12}}=0\,;$ & $X^{31/40}$ &  -  \\
  & ${a_{15}, a_{34}, b_{14}, b_{23}, c_{13}}\neq 0$ &  &  \\ \hline
9r. & ${a_{12}, a_{13}, a_{14}, a_{23}, b_{12}, b_{13}, b_{14}, b_{23},
     c_{12}}=0\,;$ & $X^{31/40}$ &  -  \\
  & ${a_{15}, a_{24}, c_{13}, d_{12}}\neq 0$ &  &  \\ \hline
9s. & ${a_{12}, a_{13}, a_{14}, a_{23}, b_{12}, b_{13}, b_{14}, c_{12},
     c_{13}}=0\,;$ & $X^{32/40}$ & $ c_{14}$ \\
  & ${a_{15}, a_{24}, b_{23}, c_{14}, d_{12}}\neq 0$ &  &  \\ \hline
9t. & ${a_{12}, a_{13}, a_{14}, a_{23}, b_{12}, b_{13}, b_{14}, c_{12},
     d_{12}}=0\,;$ & $X^{32/40}$ & $ c_{13}$ \\
  & ${a_{15}, a_{24}, b_{23}, c_{13}}\neq 0$ &  &  \\ \hline
9u. & ${a_{12}, a_{13}, a_{14}, a_{23}, b_{12}, b_{13}, b_{23}, c_{12},
     c_{13}}=0\,;$ & $X^{32/40}$ & $ c_{23}$ \\
  & ${a_{15}, a_{24}, b_{14}, c_{23}, d_{12}}\neq 0$ &  &  \\ \hline
9v. & ${a_{12}, a_{13}, a_{14}, a_{23}, b_{12}, b_{13}, b_{23}, c_{12},
     d_{12}}=0\,;$ & $X^{32/40}$ & $ c_{13}$ \\
  & ${a_{15}, a_{24}, b_{14}, c_{13}}\neq 0$ &  &  \\ \hline
9w. & ${a_{12}, a_{13}, a_{14}, a_{23}, b_{12}, b_{13}, c_{12}, c_{13},
     d_{12}}=0\,;$ & $X^{32/40}$ & $ d_{13}$ \\
  & ${a_{15}, a_{24}, b_{14}, b_{23}, d_{13}}\neq 0$ &  &  \\ \hline
9x. & ${a_{12}, a_{13}, a_{14}, b_{12}, b_{13}, b_{14}, c_{12}, c_{13},
     d_{12}}=0\,;$ & $X^{34/40}$ & $ c_{14}d_{13}^2 $ \\
  & ${a_{15}, a_{23}, c_{14}, d_{13}}\neq 0$ &  &  \\ \hline
9y. & ${a_{12}, a_{13}, a_{23}, b_{12}, b_{13}, b_{23}, c_{12}, c_{13},
     d_{12}}=0\,;$ & $X^{33/40}$ & $ d_{13}^2 $ \\
  & ${a_{14}, c_{23}, d_{13}}\neq 0$ &  &  \\ \hline
10a. & ${a_{12}, a_{13}, a_{14}, a_{15}, a_{23}, a_{24}, b_{12}, b_{13},
     b_{14}, b_{23}}=0\,;$ & $X^{33/40}$ & $ a_{34}^2 b_{15}$ \\
  & ${a_{25}, a_{34}, b_{15}, b_{24}, c_{12}}\neq 0$ &  &  \\ \hline
10b. & ${a_{12}, a_{13}, a_{14}, a_{15}, a_{23}, a_{24}, b_{12}, b_{13},
     b_{14}, c_{12}}=0\,;$ & $X^{34/40}$ & $ a_{34}^2 b_{15}d_{12}$ \\
  & ${a_{25}, a_{34}, b_{15}, b_{23}, c_{13}, d_{12}}\neq 0$ &  &  \\ \hline
10c. & ${a_{12}, a_{13}, a_{14}, a_{15}, a_{23}, a_{24}, b_{12}, b_{13},
     b_{23}, c_{12}}=0\,;$ & $X^{31/40}$ & $ a_{34}$ \\
  & ${a_{25}, a_{34}, b_{14}, c_{13}, d_{12}}\neq 0$ &  &  \\ \hline
10d. & ${a_{12}, a_{13}, a_{14}, a_{15}, a_{23}, a_{24}, b_{12}, b_{13},
     c_{12}, c_{13}}=0\,;$ & $X^{34/40}$ & $ a_{34}^2 b_{14}d_{12}$ \\
  & ${a_{25}, a_{34}, b_{14}, b_{23}, d_{12}}\neq 0$ &  &  \\ \hline
10e. & ${a_{12}, a_{13}, a_{14}, a_{15}, a_{23}, a_{24}, b_{12}, b_{13},
     c_{12}, d_{12}}=0\,;$ & $X^{34/40}$ & $ a_{25}^2 c_{13}^2 $ \\
  & ${a_{25}, a_{34}, b_{14}, b_{23}, c_{13}}\neq 0$ &  &  \\ \hline
10f. & ${a_{12}, a_{13}, a_{14}, a_{15}, a_{23}, b_{12}, b_{13}, b_{14},
     b_{23}, c_{12}}=0\,;$ & $X^{31/40}$ & $ b_{15}$ \\
  & ${a_{24}, b_{15}, c_{13}, d_{12}}\neq 0$ &  &  \\ \hline
\end{tabular}

\vspace{.15in}
\begin{center}
{\bf Table 1.} Subcases 9m--10f.
\end{center}

\begin{tabular}{|r|l|c|c|}\hline
Case & The set $S\subset V_\Z$ defined by & $N(S;X)\ll$ & Use factor \\ 
\hline\hline 
10g. & ${a_{12}, a_{13}, a_{14}, a_{15}, a_{23}, b_{12}, b_{13}, b_{14},
     c_{12}, c_{13}}=0\,;$ & $X^{34/40}$ & $ a_{24}b_{15}c_{14}d_{12}$ \\
  & ${a_{24}, b_{15}, b_{23}, c_{14}, d_{12}}\neq 0$ &  &  \\ \hline
10h. & ${a_{12}, a_{13}, a_{14}, a_{15}, a_{23}, b_{12}, b_{13}, b_{14},
     c_{12}, d_{12}}=0\,;$ & $X^{34/40}$ & $ a_{24}b_{15}c_{13}^2 $ \\
  & ${a_{24}, b_{15}, b_{23}, c_{13}}\neq 0$ &  &  \\ \hline
10i. & ${a_{12}, a_{13}, a_{14}, a_{15}, a_{23}, b_{12}, b_{13}, b_{23},
     c_{12}, c_{13}}=0\,;$ & $X^{33/40}$ & $ a_{24}b_{14}d_{12}$ \\
  & ${a_{24}, b_{14}, c_{23}, d_{12}}\neq 0$ &  &  \\ \hline
10j. & ${a_{12}, a_{13}, a_{14}, a_{15}, a_{23}, b_{12}, b_{13}, b_{23},
     c_{12}, d_{12}}=0\,;$ & $X^{31/40}$ & $ c_{13}$ \\
  & ${a_{24}, b_{14}, c_{13}}\neq 0$ &  &  \\ \hline
10k. & ${a_{12}, a_{13}, a_{14}, a_{15}, a_{23}, b_{12}, b_{13}, c_{12},
     c_{13}, d_{12}}=0\,;$ & $X^{34/40}$ & $ a_{24}b_{14}d_{13}^2 $ \\
  & ${a_{24}, b_{14}, b_{23}, d_{13}}\neq 0$ &  &  \\ \hline
10l. & ${a_{12}, a_{13}, a_{14}, a_{15}, b_{12}, b_{13}, b_{14}, c_{12},
     c_{13}, d_{12}}=0\,;$ & $X^{34/40}$ & $ b_{15}c_{14}d_{13}^2 $ \\
  & ${a_{23}, b_{15}, c_{14}, d_{13}}\neq 0$ &  &  \\ \hline
10m. & ${a_{12}, a_{13}, a_{14}, a_{23}, a_{24}, b_{12}, b_{13}, b_{14},
     b_{23}, c_{12}}=0\,;$ & $X^{31/40}$ & $ b_{24}$ \\
  & ${a_{15}, a_{34}, b_{24}, c_{13}, d_{12}}\neq 0$ &  &  \\ \hline
10n. & ${a_{12}, a_{13}, a_{14}, a_{23}, a_{24}, b_{12}, b_{13}, b_{14},
     c_{12}, c_{13}}=0\,;$ & $X^{31/40}$ & $ c_{14}$ \\
  & ${a_{15}, a_{34}, b_{23}, c_{14}, d_{12}}\neq 0$ &  &  \\ \hline
10o. & ${a_{12}, a_{13}, a_{14}, a_{23}, a_{24}, b_{12}, b_{13}, b_{14},
     c_{12}, d_{12}}=0\,;$ & $X^{31/40}$ & $ c_{13}$ \\
  & ${a_{15}, a_{34}, b_{23}, c_{13}}\neq 0$ &  &  \\ \hline
10p. & ${a_{12}, a_{13}, a_{14}, a_{23}, a_{24}, b_{12}, b_{13}, b_{23},
     c_{12}, c_{13}}=0\,;$ & $X^{31/40}$ & $ c_{23}$ \\
  & ${a_{15}, a_{34}, b_{14}, c_{23}, d_{12}}\neq 0$ &  &  \\ \hline
10q. & ${a_{12}, a_{13}, a_{14}, a_{23}, a_{24}, b_{12}, b_{13}, b_{23},
     c_{12}, d_{12}}=0\,;$ & $X^{31/40}$ & $ c_{13}$ \\
  & ${a_{15}, a_{34}, b_{14}, c_{13}}\neq 0$ &  &  \\ \hline
10r. & ${a_{12}, a_{13}, a_{14}, a_{23}, a_{24}, b_{12}, b_{13}, c_{12},
     c_{13}, d_{12}}=0\,;$ & $X^{31/40}$ & $ d_{13}$ \\
  & ${a_{15}, a_{34}, b_{14}, b_{23}, d_{13}}\neq 0$ &  &  \\ \hline
10s. & ${a_{12}, a_{13}, a_{14}, a_{23}, b_{12}, b_{13}, b_{14}, b_{23},
     c_{12}, c_{13}}=0\,;$ & $X^{33/40}$ & $ a_{24}c_{14}d_{12}$ \\
  & ${a_{15}, a_{24}, c_{14}, c_{23}, d_{12}}\neq 0$ &  &  \\ \hline
10t. & ${a_{12}, a_{13}, a_{14}, a_{23}, b_{12}, b_{13}, b_{14}, b_{23},
     c_{12}, d_{12}}=0\,;$ & $X^{31/40}$ & $ c_{13}$ \\
  & ${a_{15}, a_{24}, c_{13}}\neq 0$ &  &  \\ \hline
10u. & ${a_{12}, a_{13}, a_{14}, a_{23}, b_{12}, b_{13}, b_{14}, c_{12},
     c_{13}, d_{12}}=0\,;$ & $X^{32/40}$ & $ c_{14}d_{13}$ \\
  & ${a_{15}, a_{24}, b_{23}, c_{14}, d_{13}}\neq 0$ &  &  \\ \hline
10v. & ${a_{12}, a_{13}, a_{14}, a_{23}, b_{12}, b_{13}, b_{23}, c_{12},
     c_{13}, d_{12}}=0\,;$ & $X^{32/40}$ & $ c_{23}d_{13}$ \\
  & ${a_{15}, a_{24}, b_{14}, c_{23}, d_{13}}\neq 0$ &  &  \\ \hline
11a. & ${a_{12}, a_{13}, a_{14}, a_{15}, a_{23}, a_{24}, b_{12}, b_{13},
     b_{14}, b_{23}, c_{12}}=0\,;$ & $X^{31/40}$ & $ a_{34}b_{15}$ \\
  & ${a_{25}, a_{34}, b_{15}, b_{24}, c_{13}, d_{12}}\neq 0$ &  &  \\ \hline
11b. & ${a_{12}, a_{13}, a_{14}, a_{15}, a_{23}, a_{24}, b_{12}, b_{13},
     b_{14}, c_{12}, c_{13}}=0\,;$ & $X^{34/40}$ &
    $ a_{34}^2 b_{15}c_{14}d_{12}$ \\
  & ${a_{25}, a_{34}, b_{15}, b_{23}, c_{14}, d_{12}}\neq 0$ &  &  \\ \hline
11c. & ${a_{12}, a_{13}, a_{14}, a_{15}, a_{23}, a_{24}, b_{12}, b_{13},
     b_{14}, c_{12}, d_{12}}=0\,;$ & $X^{36/40}$ &
    $ a_{25}^2 a_{34}b_{15}c_{13}^3 $ \\
  & ${a_{25}, a_{34}, b_{15}, b_{23}, c_{13}}\neq 0$ &  &  \\ \hline
\end{tabular}

\vspace{.15in}
\begin{center}
{\bf Table 1.} Subcases 10g--11c.
\end{center}

\begin{tabular}{|r|l|c|c|}\hline
Case & The set $S\subset V_\Z$ defined by & $N(S;X)\ll$ & Use factor \\ 
\hline\hline 
11d. & ${a_{12}, a_{13}, a_{14}, a_{15}, a_{23}, a_{24}, b_{12}, b_{13},
     b_{23}, c_{12}, c_{13}}=0\,;$ & $X^{33/40}$ & $ a_{34}^2 b_{14}d_{12}$
     \\
  & ${a_{25}, a_{34}, b_{14}, c_{23}, d_{12}}\neq 0$ &  &  \\ \hline
11e. & ${a_{12}, a_{13}, a_{14}, a_{15}, a_{23}, a_{24}, b_{12}, b_{13},
     b_{23}, c_{12}, d_{12}}=0\,;$ & $X^{31/40}$ & $ a_{25}c_{13}$ \\
  & ${a_{25}, a_{34}, b_{14}, c_{13}}\neq 0$ &  &  \\ \hline
11f. & ${a_{12}, a_{13}, a_{14}, a_{15}, a_{23}, a_{24}, b_{12}, b_{13},
     c_{12}, c_{13}, d_{12}}=0\,;$ & $X^{34/40}$ & $ a_{25}^2 b_{14}d_{13}^2 $
     \\
  & ${a_{25}, a_{34}, b_{14}, b_{23}, d_{13}}\neq 0$ &  &  \\ \hline
11g. & ${a_{12}, a_{13}, a_{14}, a_{15}, a_{23}, b_{12}, b_{13}, b_{14},
     b_{23}, c_{12}, c_{13}}=0\,;$ & $X^{33/40}$ & $ a_{24}b_{15}c_{14}d_{12}$
     \\
  & ${a_{24}, b_{15}, c_{14}, c_{23}, d_{12}}\neq 0$ &  &  \\ \hline
11h. & ${a_{12}, a_{13}, a_{14}, a_{15}, a_{23}, b_{12}, b_{13}, b_{14},
     b_{23}, c_{12}, d_{12}}=0\,;$ & $X^{31/40}$ & $ b_{15}c_{13}$ \\
  & ${a_{24}, b_{15}, c_{13}}\neq 0$ &  &  \\ \hline
11i. & ${a_{12}, a_{13}, a_{14}, a_{15}, a_{23}, b_{12}, b_{13}, b_{14},
     c_{12}, c_{13}, d_{12}}=0\,;$ & $X^{34/40}$ &
    $ a_{24}b_{15}c_{14}d_{13}^2 $ \\
  & ${a_{24}, b_{15}, b_{23}, c_{14}, d_{13}}\neq 0$ &  &  \\ \hline
11j. & ${a_{12}, a_{13}, a_{14}, a_{15}, a_{23}, b_{12}, b_{13}, b_{23},
     c_{12}, c_{13}, d_{12}}=0\,;$ & $X^{33/40}$ & $ a_{24}b_{14}d_{13}^2 $
     \\
  & ${a_{24}, b_{14}, c_{23}, d_{13}}\neq 0$ &  &  \\ \hline
11k. & ${a_{12}, a_{13}, a_{14}, a_{23}, a_{24}, b_{12}, b_{13}, b_{14},
     b_{23}, c_{12}, c_{13}}=0\,;$ & $X^{33/40}$ & $ a_{34}b_{24}c_{14}d_{12}$
     \\
  & ${a_{15}, a_{34}, b_{24}, c_{14}, c_{23}, d_{12}}\neq 0$ &  &  \\ \hline
11l. & ${a_{12}, a_{13}, a_{14}, a_{23}, a_{24}, b_{12}, b_{13}, b_{14},
     b_{23}, c_{12}, d_{12}}=0\,;$ & $X^{31/40}$ & $ b_{24}c_{13}$ \\
  & ${a_{15}, a_{34}, b_{24}, c_{13}}\neq 0$ &  &  \\ \hline
11m. & ${a_{12}, a_{13}, a_{14}, a_{23}, a_{24}, b_{12}, b_{13}, b_{14},
     c_{12}, c_{13}, d_{12}}=0\,;$ & $X^{31/40}$ & $ c_{14}d_{13}$ \\
  & ${a_{15}, a_{34}, b_{23}, c_{14}, d_{13}}\neq 0$ &  &  \\ \hline
11n. & ${a_{12}, a_{13}, a_{14}, a_{23}, a_{24}, b_{12}, b_{13}, b_{23},
     c_{12}, c_{13}, d_{12}}=0\,;$ & $X^{31/40}$ & $ c_{23}d_{13}$ \\
  & ${a_{15}, a_{34}, b_{14}, c_{23}, d_{13}}\neq 0$ &  &  \\ \hline
11o. & ${a_{12}, a_{13}, a_{14}, a_{23}, b_{12}, b_{13}, b_{14}, b_{23},
     c_{12}, c_{13}, d_{12}}=0\,;$ & $X^{33/40}$ & $ a_{24}c_{14}d_{13}^2 $
     \\
  & ${a_{15}, a_{24}, c_{14}, c_{23}, d_{13}}\neq 0$ &  &  \\ \hline
12a. & ${a_{12}, a_{13}, a_{14}, a_{15}, a_{23}, a_{24}, b_{12}, b_{13},
     b_{14}, b_{23}, c_{12}, c_{13}}=0\,;$ & $X^{33/40}$ &
    $ a_{34}^2 b_{15}c_{14}d_{12}$ \\
  & ${a_{25}, a_{34}, b_{15}, b_{24}, c_{14}, c_{23}, d_{12}}
    \neq 0$ &  &  \\ \hline
12b. & ${a_{12}, a_{13}, a_{14}, a_{15}, a_{23}, a_{24}, b_{12}, b_{13},
     b_{14}, b_{23}, c_{12}, d_{12}}=0\,;$ & $X^{36/40}$ &
    $ a_{25}^2 a_{34}b_{15}b_{24}c_{13}^3 $ \\
  & ${a_{25}, a_{34}, b_{15}, b_{24}, c_{13}}\neq 0$ &  &  \\ \hline
12c. & ${a_{12}, a_{13}, a_{14}, a_{15}, a_{23}, a_{24}, b_{12}, b_{13},
     b_{14}, c_{12}, c_{13}, d_{12}}=0\,;$ & $X^{36/40}$ &
    $ a_{25}^2 a_{34}b_{15}c_{14}^2 d_{13}^2 $ \\
  & ${a_{25}, a_{34}, b_{15}, b_{23}, c_{14}, d_{13}}\neq 0$ &  &  \\ \hline
12d. & ${a_{12}, a_{13}, a_{14}, a_{15}, a_{23}, a_{24}, b_{12}, b_{13},
     b_{23}, c_{12}, c_{13}, d_{12}}=0\,;$ & $X^{33/40}$ &
    $ a_{25}^2 b_{14}d_{13}^2 $ \\
  & ${a_{25}, a_{34}, b_{14}, c_{23}, d_{13}}\neq 0$ &  &  \\ \hline
12e. & ${a_{12}, a_{13}, a_{14}, a_{15}, a_{23}, b_{12}, b_{13}, b_{14},
     b_{23}, c_{12}, c_{13}, d_{12}}=0\,;$ & $X^{33/40}$ &
    $ a_{24}b_{15}c_{14}d_{13}^2 $ \\
  & ${a_{24}, b_{15}, c_{14}, c_{23}, d_{13}}\neq 0$ &  &  \\ \hline
12f. & ${a_{12}, a_{13}, a_{14}, a_{23}, a_{24}, b_{12}, b_{13}, b_{14},
     b_{23}, c_{12}, c_{13}, d_{12}}=0\,;$ & $X^{33/40}$ &
    $ a_{15}b_{24}c_{23}d_{13}^2 $ \\
  & ${a_{15}, a_{34}, b_{24}, c_{14}, c_{23}, d_{13}}\neq 0$ &  &  \\ \hline
13. & ${a_{12}, a_{13}, a_{14}, a_{15}, a_{23}, a_{24}, b_{12}, b_{13},
     b_{14}, b_{23}, c_{12}, c_{13}, d_{12}}=0\,;$ & $X^{37/40}$ &
     $ a_{25}^2 a_{34}b_{24}^2 c_{14}^2 d_{13}^3 $ \\
  & ${a_{25}, a_{34}, b_{15}, b_{24}, c_{14}, c_{23}, d_{13}}
     \neq 0$ &  &  \\ \hline
\end{tabular}
 
\vspace{.15in}
\begin{center}
{\bf Table 1.} Subcases 11d--13.
\end{center} 

\vspace{.35in}


\pagebreak
Therefore, for the purposes of proving Theorem~\ref{cna}, we may assume 
that $a_{12}\neq 0$.  

\subsection{The main term}\label{mainterm}

Let $\RR_X(v)$ denote the multiset $\{x\in
\FF v:|\Disc(x)|<X\}$.  Then we have the following result counting the
number of integral points in $\RR_X(v)$, on average, satisfying
$a_{12}\neq 0$:

\begin{proposition}\label{nonzeroa12}
  Let $v$ take a random value in $H\cap V^{(i)}$ uniformly with
  respect to the measure $|\Disc(v)|^{-1}\,dv$.  Then the expected
  number of integral elements $(A,B,C,D)\in\FF v$ such that \linebreak
  $|\Disc(A,B,C,D)|< X$ and $a_{12}\neq0$ is $\Vol(\RR_X(v_i))
  + O(X^{39/40})$, where $v_i$ is any vector in $V^{(i)}$.
\end{proposition}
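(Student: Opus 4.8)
The plan is to evaluate the expected number by the unfolding that produced the averaging formula~(\ref{avgS}), and then to subtract off the volume $\Vol(\RR_X(v_i))$ by matching the two resulting Siegel integrals and estimating the discrepancy with Lemma~\ref{genbound}. Carrying out the derivation of~(\ref{avgS}) once with the counting measure on $V_\Z$ and once with Lebesgue measure on $V_\R$ in its place yields, with one common constant $\kappa_i=r_in_i/M_i$, the two identities
\[
E = \kappa_i\int_{N'(a)A'\Lambda}\#\{x\in V_\Z\cap H(u,s,\lambda,X):a_{12}\neq0\}\, s_1^{-12}s_2^{-8}s_3^{-12}s_4^{-20}s_5^{-30}s_6^{-30}s_7^{-20}\,du\,d^\times\! s\,d^\times\!\lambda
\]
and
\[
\Vol(\RR_X(v_i)) = \kappa_i\int_{N'(a)A'\Lambda}\Vol\bigl(H(u,s,\lambda,X)\bigr)\, s_1^{-12}s_2^{-8}s_3^{-12}s_4^{-20}s_5^{-30}s_6^{-30}s_7^{-20}\,du\,d^\times\! s\,d^\times\!\lambda,
\]
where $E$ denotes the expected number in the statement; here I use $KH=H$, $\int_K dk=1$, the boundedness of $\nu(a)$, and that imposing $a_{12}\neq0$ removes only a measure-zero set. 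Subtracting, it suffices to bound the Siegel integral of the pointwise discrepancy $\delta:=\bigl|\#\{x\in V_\Z\cap H(u,s,\lambda,X):a_{12}\neq0\}-\Vol(H(u,s,\lambda,X))\bigr|$ by $O(X^{39/40})$.

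Before that I would check that the main term is well defined, i.e. that $\Vol(\RR_X(v))$ is the same for every $v\in V^{(i)}$. Parametrizing the orbit by $x=gv$, $g\in\FF$, and using that $|\Disc(x)|^{-1}dx$ is $G_\R$-invariant gives $\Vol(\RR_X(v))=r_i\int_{g\in\FF}\mathbf 1[|\Disc(gv)|<X]\,|\Disc(gv)|\,dg$. The integrand is a left $G_\Z$-invariant function of $g$, since every $\gamma\in G_\Z$ fixes the discriminant; as $dg$ is left-invariant and $\FF$ is a fundamental domain for $G_\Z\backslash G_\R$, this integral is unchanged when $\FF$ is replaced by any $\FF h$, i.e. under $v\mapsto hv$. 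Hence $\Vol(\RR_X(v))$ depends only on $i$, and $\Vol(\RR_X(v_i))$ is unambiguous.

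For the discrepancy I would apply Lemma~\ref{genbound} to the bounded semi-algebraic region $a(s)\lambda H\cap\{|\Disc|<X\}$, taking $\bar n(u)$ as the unipotent transformation carrying it to $H(u,s,\lambda,X)$; this bounds $\delta$ by the largest coordinate-projection volume of $a(s)\lambda H\cap\{|\Disc|<X\}$, whose extent in the variable $t$ is $\asymp w(t)$. The key point is that $a_{12}$ has the globally minimal weight, so wherever an integer point with $a_{12}\neq0$ occurs one has $w(t)\ge w(a_{12})\gg 1/J$ for all $t$; the small weights are then confined to the bounded range $[1/J,1)$, and the maximal projection is, up to an absolute constant, the one dropping $a_{12}$, of volume $\asymp\prod_{t\neq a_{12}}w(t)$. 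In the complementary range $w(a_{12})<1/J$ there is no integer point with $a_{12}\neq0$, so the count is $0$ and $\delta=\Vol(H(u,s,\lambda,X))\asymp\prod_t w(t)\le\prod_{t\neq a_{12}}w(t)$. In either case $\delta\ll\prod_{t\neq a_{12}}w(t)$ uniformly over the Siegel domain.

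The hard part is the final integration. Feeding $\delta\ll\prod_{t\neq a_{12}}w(t)$ into the Siegel integral produces exactly the integral~(\ref{estv1s}) with $T_0=\{a_{12}\}$ --- that is, the integral governing Case~1 of Table~1 --- because dropping the minimal coordinate corresponds precisely to the boundary locus $a_{12}=0$. Its value is $O(X^{39/40})$: the extra factor $1/w(a_{12})\propto1/\lambda$ relative to the main term removes one power of $\lambda$ from the range $[c',X^{1/40}]$, reducing the order from $\int_{c'}^{X^{1/40}}\lambda^{40}\,d^\times\!\lambda\asymp X$ to $\int_{c'}^{X^{1/40}}\lambda^{39}\,d^\times\!\lambda\asymp X^{39/40}$, while the remaining $s$-integral converges since the total exponent of each $s_j$ stays negative (the ``$-$'' entry for Case~1 in Table~1). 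This gives $E-\Vol(\RR_X(v_i))=O(X^{39/40})$, as desired.
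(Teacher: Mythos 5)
Your proof is correct and follows essentially the same route as the paper: unfold the expected count into a Siegel-domain integral via the averaging formula, apply Lemma~\ref{genbound} using the fact that $a_{12}$ has minimal weight so the dominant projection is the one dropping $a_{12}$, observe that the strip $|a_{12}|<1$ and the no-lattice-point regime contribute the same bound $\prod_{t\neq a_{12}}w(t)$, and integrate to get $O(X^{39/40})$. The only organizational differences are cosmetic (you bound a single discrepancy $\delta$ with a two-case split where the paper writes $\H=H-(H-\H)$ and treats three terms, and you include the well-definedness of $\Vol(\RR_X(v))$ which the paper defers to Section~2.6).
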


\begin{proof}
Following the proof of Lemma~\ref{hard}, let $V^{(i)}(0)$ denote the subset
of $V_\R$ such that $a_{12}\neq 0$.  
We wish to show that
\begin{equation}\label{toprove2}
N^*(V^{(i)}(0);X)=\frac{1}{n_i}\cdot\Vol(\RR_X(v_i)) + O(X^{39/40}).
\end{equation}
We have
\begin{equation}\label{translate2}
  N^*(V^{(i)}(0);X)=\frac{r_i}{M_i}\int_{\lambda=c'}^{X^{1/40}} \!\!
\int_{s_1,s_2,\ldots,s_7=c}^\infty
\int_{u\in    N'(a(s))} 
  \sigma(V(0))
  \, s_1^{-12}s_2^{-8}s_3^{-12}s_4^{-20}s_5^{-30}s_6^{-30}s_7^{-20} \,du\,
  d^\times\! s \,d^\times\!\lambda,
\end{equation}
where $\sigma(V(0))$ denotes the number of integer points
in the region $H(u,s,\lambda,X)$ satisfying $|a_{12}|\geq 1$.
Evidently, the number of integer points in $H(u,s,\lambda,X)$
with $|a_{12}|\geq 1$ can be nonzero only if we have
\begin{equation}\label{condt2}
J{w(a_{12})}=J\cdot\frac{\lambda}{s_1^3s_2s_3s_4^3s_5^6s_6^4s_7^2}\geq 1.
\end{equation}
Therefore, if the region $\H=\{(A,B,C,D)\in
H(u,s,\lambda,X):|a_{12}|\geq 1\}$
 contains an integer point, then (\ref{condt2}) and
Lemma~\ref{genbound} imply that the number of integer points in $\H$
is $\Vol(\H)+O(J^{-1}\Vol(\H)/w(a_{12}))$, since all
smaller-dimensional projections of $u^{-1}\H$ are clearly bounded by
a constant times the projection of $\H$ onto the hyperplane $a_{12}=0$ (since
$a_{12}$ has minimal weight).

Therefore, since $\H=H(u,s,\lambda,X)-\bigl(H(u,s,\lambda,X)-\H\bigr)$, 
we may write
\begin{eqnarray}\label{bigint}
N^\ast(V^{(i)}(0);X) &\!\!\!=\!\!& \!\frac{r_i}{M_i}
\int_{\lambda=c'}^{X^{1/40}}
\int_{s_1,\ldots,s_7=c}^{\infty} \int_{u\in N'(a(s))}
\Bigl(\Vol\bigl(H(u,s,\lambda,X)\bigr)-\Vol\bigl(H(u,s,\lambda,X)-\H\bigr) 
+ \\[.085in]\nonumber & & \,\,\,\,\,\,\,\,\,\,
\,\,\,\,\,\,\,O(\max\{{J^{39}\lambda^{39}s_1^3s_2s_3s_4^3s_5^6s_6^4s_7^2},1\})
\Bigr)   
\, s_1^{-12}s_2^{-8}s_3^{-12}s_4^{-20}s_5^{-30}s_6^{-30}s_7^{-20} \,du\,
d^\times s\, d^\times \lambda.
\end{eqnarray}
The integral of the first term in (\ref{bigint}) is $(1/r_i)\cdot\int_{v\in H\cap V^{(i)}}
\Vol(\RR_X(v))|\Disc(v)|^{-1}dv$.
Since $\Vol(\RR_X(v))$ does not
depend on the choice of $v\in V^{(i)}$ (see Section~\ref{volcomp}),
the latter integral is simply $[M_i/(n_i\,r_i)]\cdot \Vol(\RR_X(v))$.

To estimate the integral of the second term in (\ref{bigint}), let $\H'=
H(u,s,t,X)-\H$, and for each $|a_{12}|\leq 1$, let $\H'(a_{12})$ be
the subset of all elements $(A,B,C,D)\in\H'$ with the given value of
$a_{12}$.  Then the 39-dimensional volume of $\H'(a_{12})$ is at most 
$O\Bigl(J^{39}\prod_{t\in T\setminus\{a_{12}\}}w(t)\Bigr)$, and so we
have the estimate 
\[\Vol(\H') \ll \int_{-1}^1 J^{39}\prod_{t\in
  T\setminus\{a_{12}\}}w(t)
\,\,da_{12} = O\Bigl(J^{39}\prod_{t\in T\setminus\{a_{12}\}}w(t)\Bigr).\]
The second term of the integrand in (\ref{bigint}) can thus be
absorbed into the third term.
 

Finally, one easily computes the
integral of the third term in (\ref{bigint}) to be
$O(J^{39}X^{39/40})$.  We thus obtain, for any $v\in V^{(i)}$, that
\begin{equation}\label{obtain}
N^\ast(V^{(i)};X) = 
\frac1{n_i}\cdot\Vol(\RR_X(v)) 
+ O(J^{39}X^{39/40}/M_i(J)).
\end{equation}
\end{proof}



Note that the above proposition counts all integer points in $\RR_X(v)$
satisfying $a_{12}\neq 0$, not just the irreducible ones.
However, in this regard we have the following lemma:



\begin{lemma}\label{3reducible}
Let $v\in H\cap V^{(i)}$.  Then the number of 
$(A,B,C,D)\in\FF v$ such that $a_{12}\neq 0$, $|\Disc(A,B,C,D)|<X$, and 
$(A,B,C,D)$ is not irreducible is $o(X)$.
\end{lemma}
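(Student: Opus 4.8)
The plan is to bound the reducible points by exploiting the geometric characterization of reducibility established just before Theorem~\ref{cna}: an element $(A,B,C,D)$ of nonzero discriminant is reducible precisely when at least one of its five zeroes in $\P^3$ is defined over a field of degree strictly less than $5$. Since the five zeroes form a single $\Gal(\bar\Q/\Q)$-set, reducibility means this set breaks into at least two Galois orbits; examining the partitions of $5$ into at least two parts, one checks that the smallest orbit always has size $1$ or $2$. Hence every reducible $(A,B,C,D)$ possesses either a zero defined over $\Q$, or a conjugate pair of zeroes defined over a quadratic field (the latter occurring only for the splitting type cubic $\times$ quadratic). I would treat these two cases separately, and in each produce a proper algebraic subvariety of $V$ on which the relevant reducible points must lie.

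In the rational-zero case, the argument in the proof of Lemma~\ref{lem1} shows that a zero $[t_1:t_2:t_3:t_4]\in\P^3(\Q)$ is equivalent to the vanishing of all five sub-Pfaffians of the pencil $t_1A+t_2B+t_3C+t_4D$, i.e.\ to some nonzero $\Q$-linear combination of $A,B,C,D$ having rank $\le 2$. I would clear denominators and use $\GL_4(\Z)$ to move this combination into a fixed coordinate direction; since the locus of rank-$\le 2$ skew-symmetric $5\times 5$ forms is the affine cone over the Grassmannian $\mathrm{Gr}(2,5)$ and so has codimension $3$, the reducible point is thereby confined to a subvariety of $V$ of codimension $3$. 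In the quadratic-zero case, the conjugate pair of zeroes spans a line in $\P^3$ defined over $\Q$, which I would move by $\GL_4(\Z)$ to a standard coordinate line; this again confines the point to a proper subvariety cut out by the corresponding Pfaffian conditions.

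With the reducible points localized on these low-dimensional subvarieties, I would count them using the averaging identity~(\ref{avgS}) together with the geometry-of-numbers estimate of Lemma~\ref{genbound}, exactly as in the proof of Lemma~\ref{hard}. For fixed Siegel parameters, the number of reducible lattice points in $H(u,s,\lambda,X)$ with $a_{12}\neq 0$ is controlled by the volume of the relevant slice of the codimension-$3$ locus; because that locus is homogeneous of positive codimension, each such volume is smaller than the corresponding full-dimensional volume by a fixed power of the scaling parameters. Performing the integration over the Siegel domain, and summing over the bounded collection of admissible zero directions, should then yield a bound of the shape $O(X^{1-\delta})$ (concretely, one expects $O(X^{37/40})$) for the number of reducible points with $a_{12}\neq 0$, which is $o(X)$ as required.

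The main obstacle, as throughout this paper, lies in the cusps. Reducible points are concentrated in the cuspidal regions of $\FF v$, so a crude volume bound on the codimension-$3$ locus does not by itself suffice: in the deepest cusps the lattice-point count on this locus could a priori be of order $X$. The role of the hypothesis $a_{12}\neq 0$ is precisely to keep us in the main body and away from the worst cusps, and the delicate point will be to combine this restriction with the same bookkeeping of variable weights $w(t)$ that drives Table~1, so as to verify that the total exponent stays strictly below $1$ on every relevant piece. A secondary nuisance is that the $\GL_4(\Z)$-normalization exposing the rank-$\le 2$ structure does not respect the coordinate $a_{12}$; reconciling that normalization with the main-body condition is what forces the summation of the codimension-$3$ estimate over the boundedly many possible positions of the rational zero, and it is exactly this interaction between the algebraic degeneration and its geometric location in the fundamental domain that makes the estimate go through.
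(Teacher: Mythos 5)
Your strategy is genuinely different from the paper's, and as written it contains a gap that is not a technicality but the actual crux. The paper gives no geometric argument here at all: it observes that Lemma~\ref{3reducible} follows from the stronger Lemma~\ref{3reducible2}, which is proved in Section~3.2 by a purely local sieve. A quintic ring that is not an $S_5$-ring must fail to exhibit splitting type $(1112)$ at every prime, or fail to exhibit splitting type $(5)$ at every prime; since $\mu_p(T_p(1112))\to 1/12$ and $\mu_p(T_p(5))\to 1/5$, the congruence version of the counting theorem (equation (\ref{ramanujan})) bounds the count by a constant times $\bigl(\prod_{p<N}(1-\mu_p(T_p(1112)))+\prod_{p<N}(1-\mu_p(T_p(5)))\bigr)X+o(X)$, and letting $N\to\infty$ kills the main term. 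In particular elements that are reducible (not merely non-$S_5$) are swept up with no geometry whatsoever. This route crucially uses the algebraic parametrization of \cite{Bhargava4} to convert ``reducible'' into a condition visible modulo every prime, which is exactly what your approach tries to do by hand in $V_\R$.

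The concrete gap in your argument is the claim that one sums ``over the bounded collection of admissible zero directions.'' The rational zero $[t_1:t_2:t_3:t_4]$ of a reducible integral element can have arbitrarily large height even when $(A,B,C,D)$ lies in a bounded box, so the reducible locus is an \emph{infinite} union of codimension-$3$ subvarieties, one for each primitive direction (and an infinite union of loci indexed by rational lines in the quadratic case). Controlling the total count therefore requires either (a) a decay estimate in the height of the direction, summed over all primitive $(r,s,t,u)$, or (b) passing to $G_\Z$-orbits of pairs (element, distinguished zero), normalizing the zero to $(1,0,0,0)$, and then redoing the entire reduction theory for the stabilizer of that flag acting on the $37$-dimensional rank-$\le 2$ locus --- neither of which is ``exactly as in the proof of Lemma~\ref{hard}.'' Relatedly, Lemma~\ref{genbound} estimates lattice points by volumes plus projections; on a positive-codimension subvariety the relevant volume is zero and the lemma gives no savings without first parametrizing the variety (e.g.\ writing a rank-$\le 2$ skew form as $v\wedge w$) and rerunning the weight bookkeeping in those coordinates. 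You correctly identify that the $\GL_4(\Z)$-normalization is incompatible with the Siegel coordinates and the condition $a_{12}\neq 0$, but you do not resolve this tension, and resolving it is the entire content of the estimate. None of these obstacles is obviously fatal --- a version of this geometric argument can likely be carried out --- but the proposal as it stands asserts the conclusion at precisely the points where the work lies, whereas the paper's sieve avoids them entirely.
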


Lemma~\ref{3reducible} will in fact follow from a stronger lemma.   We
say that an element $(A,B,C,D)\in V_\Z$ is {\it absolutely irreducible} if
it is irreducible and the fraction field of its associated quintic ring is 
an $S_5$-quintic field (equivalently, if
the fields of definition of its common zeroes in $\P^3$ are 
$S_5$-quintic fields).
Then we have the following lemma, whose proof is postponed to Section~3:
\begin{lemma}\label{3reducible2}
Let $v\in H\cap V^{(i)}$.  Then the number of 
$(A,B,C,D)\in\FF v$ such that $a_{12}\neq 0$, $|\Disc(A,B,C,D)|<X$, and 
$(A,B,C,D)$ is not absolutely irreducible is $o(X)$.
\end{lemma}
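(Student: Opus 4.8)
The plan is to partition the non-absolutely-irreducible elements $(A,B,C,D)\in\FF v$ with $a_{12}\neq 0$ into two families and bound the count of each by $o(X)$. The first family consists of the \emph{reducible} elements, those for which the quintic ring $R$ attached by Theorem~\ref{main} is not a domain. The second consists of the \emph{irreducible but non-$S_5$} elements, for which $R$ is a domain but the quintic field $K=\mathrm{Frac}(R)$ has Galois group a proper transitive subgroup of $S_5$, namely one of $C_5$, $D_5$, $F_{20}$, or $A_5$.

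For the reducible family I would argue as in the proof of Lemma~\ref{hard}. By the geometric description in Section~2.4, a reducible $(A,B,C,D)$ has a zero in $\P^3$ defined over a field of degree $<5$, so its five zeros break into $\Gal(\bar\Q/\Q)$-orbits indexed by a partition $5=4+1,\,3+2,\,3+1+1,\,2+2+1,\,2+1+1+1,$ or $1+1+1+1+1$. For each partition I would use the $\GL_4(\Z)$-action on the $\P^3$ of $t$-variables to move a low-degree sub-orbit of zeros into standard position; exactly as in Lemma~\ref{lem1}, a rational zero then forces a rational linear combination of $A,B,C,D$ to have rank $\le 2$, and a conjugate quadratic pair forces a rank drop along a rational $2$-plane, so that the reducibility manifests as one of the vanishing patterns of Lemma~\ref{red}. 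The bulk of such configurations (e.g.\ those with $R\cong S\times\Z$ for $S$ a quartic ring) are cuspidal and are already excluded by the hypothesis $a_{12}\neq 0$; what survives lies in a thin region to which the averaging identity~(\ref{avgS}) and Lemma~\ref{genbound} apply to give a bound $O(X^{1-\delta})$, hence $o(X)$. This already yields Lemma~\ref{3reducible}.

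For the non-$S_5$ family I would treat the solvable and the $A_5$ cases separately. The solvable fields are sparse: cyclic ($C_5$) quintic fields satisfy $\Disc(K)=f^4$ and so number $O(X^{1/4})$, while the dihedral ($D_5$) and Frobenius ($F_{20}$) fields arise from abelian data over a fixed quadratic or quartic resolvent field and, by class field theory, number $O(X^{1-\delta})$. The essential case is $A_5$. Here the key leverage is that $\Disc(K)$ is a perfect square, since the $A_5$-Galois group lies in the alternating group, so the discriminants of $A_5$-fields range over a density-zero set. To convert this sparsity into an $o(X)$ bound on the number of \emph{fields}, I would pass through the sextic resolvent of Theorem~\ref{main}: an $A_5$-quintic ring has a degenerate resolvent ring $R'$, since the six-point action of $A_5\cong\mathrm{PSL}_2(\F_5)$ yields the exceptional embedding $A_5\hookrightarrow A_6$ and hence a non-generic sextic algebra. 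The number of admissible resolvent rings $R'$ attached to such $R$, of bounded discriminant, is then controlled by the resolvent-ring counts of~\cite{Bhargava4} together with the subring counts of Brakenhoff~\cite{Jos}; combining these with the square-discriminant restriction bounds the $A_5$-contribution by $o(X)$.

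The main obstacle is precisely the $A_5$ case. Unlike the solvable subgroups, $A_5$ cannot be reached by abelian class field theory, so its negligibility is not a soft consequence of sparse ramification; the whole force of the argument lies in combining the parity constraint that $\Disc(K)$ be a square with the resolvent-ring and subring counting estimates. This is exactly why Lemma~\ref{3reducible2}, unlike Lemma~\ref{hard}, must be deferred to the sieve of Section~3, whereas the reducibility bound and the solvable non-$S_5$ bounds follow from the geometry-of-numbers machinery already developed in Section~2 supplemented by standard class field theory.
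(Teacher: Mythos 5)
Your decomposition into reducible and non-$S_5$ subfamilies misses the single idea that drives the paper's proof: every non-absolutely-irreducible $(A,B,C,D)$ corresponds to a quintic ring $R$ in which at least one of the splitting types $(1112)$ or $(5)$ occurs at \emph{no} prime. (If both occurred, then $R/pR\cong\F_{p^5}$ for some $p$ would force $R$ to be a domain, and the Galois group of its quotient field would contain both a transposition and a $5$-cycle, hence be all of $S_5$.) The set of lattice points avoiding a fixed splitting type at every prime $p<N$ is cut out by finitely many congruence conditions, so Theorem~\ref{cong} together with the $p$-adic densities $\mu_p(T_p(1112))\to 1/12$ and $\mu_p(T_p(5))\to 1/5$ from \cite[Lemma~20]{Bhargava4} bounds its count by a constant times $\bigl(\prod_{p<N}(1-\mu_p(\cdot))\bigr)X+o(X)$; letting $N\to\infty$ kills the main term. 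This one sieve handles the reducible, solvable, and $A_5$ cases uniformly, which is why no case division is needed.

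Both of the hard steps in your outline are genuinely gapped. For the reducible family, reducibility of an integral point with $a_{12}\neq 0$ is an arithmetic condition (existence of a zero in $\P^3$ defined over a field of degree $<5$, possibly of large height), not an algebraic one: such points are not confined to any proper subvariety or cuspidal region, so there is no ``thin region'' to which Lemma~\ref{genbound} and the averaging identity can be applied, and the $\GL_4(\Z)$-normalization you propose would move the point out of the fundamental domain $\FF v$ rather than into one of the vanishing patterns of Lemma~\ref{red} (those patterns are sufficient for reducibility, not necessary). For the $A_5$ family, the fact that $\Disc(K)$ is a square restricts the set of attainable discriminant \emph{values} to a density-zero set but says nothing about the number of fields or lattice points realizing those values, and the appeal to a ``degenerate'' sextic resolvent via $A_5\hookrightarrow A_6$ is never developed into an actual estimate; no bound of the form ``there are $o(X)$ many $A_5$-quintic points of discriminant up to $X$'' is established. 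The splitting-type sieve is exactly the device that converts the group-theoretic constraint (a transposition is odd, so an $A_5$-field never has splitting type $(1112)$, and a non-domain never has splitting type $(5)$) into a quantitative $o(X)$ bound; without it, or a substitute, your argument does not close.
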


Therefore, to prove Theorem~\ref{cna}, it remains only to compute the
fundamental volume $\Vol(\RR_X(v))$ for $v\in V^{(i)}$.  This is
handled in the next subsection.



\subsection{Computation of the fundamental volume}\label{volcomp}

In this subsection, we compute $\Vol(\RR_X(v))$, where $\RR_X(v)$ is
defined as in Section~\ref{mainterm}.  We will see that this volume
depends only on whether $v$ lies in $V^{(0)}$, $V^{(1)}$, or
$V^{(2)}$; here $V^{(i)}$ again denotes the $G_\R$-orbit in $V_\R$
consisting of those elements $(A,B,C,D)$ having nonzero discriminant
and possessing $5-2i$ real zeros in $\P^3$.

Before performing this computation, we first state two 
propositions regarding
the group $G=\GL_4\times \SL_5$ and its 40-dimensional representation $V$.



\begin{proposition}\label{covering}
The group $G_\R$ acts transitively on $V^{(i)}$, and the isotropy groups
for $v\in V^{(i)}$ are given as follows: 

\vspace{.1in}
$(${\em i}$)$ \,\,\,\,$S_5$, if $v\in V^{(0)}$; 

\vspace{.1in}
$(${\em ii}$)$ \,\,$S_3\times C_2$, if $v\in V^{(1)}$; and 

\vspace{.1in}
$(${\em iii}$)$ $D_4$, if $v\in V^{(2)}$.
\end{proposition}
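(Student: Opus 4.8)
The plan is to exploit the geometric description of a nondegenerate quadruple through its five zeroes in $\P^3$. Fix $v=(A,B,C,D)\in V^{(i)}$, let $Q_1,\ldots,Q_5$ be its sub-Pfaffians, and let $P_1,\ldots,P_5\in\P^3(\C)$ be its five zeroes; by hypothesis exactly $5-2i$ of them are real, the remaining $2i$ forming $i$ complex-conjugate pairs. Complex conjugation thus acts on $\{P_1,\ldots,P_5\}$ by a permutation $c$ which is trivial for $i=0$, a single transposition for $i=1$, and a product of two disjoint transpositions for $i=2$. The governing principle is that $\Stab_{G_\R}(v)$ is the symmetry group of this configuration compatible with the real structure, and hence is $\Aut_\R(K_\infty)$ for the associated \'etale $\R$-algebra $K_\infty=\R^{5-2i}\oplus\C^{\,i}$.

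I would first dispose of transitivity. Over $\C$ the pair $(\GL_4,\SL_5)$ acting on $\C^4\otimes\wedge^2\C^5$ is prehomogeneous with relative invariant $\Disc$, so $G_\C$ acts transitively on $\{\Disc\neq 0\}$ (Sato--Kimura; see also \cite{WY}, \cite{Bhargava4}). The $G_\R$-orbits inside $V_\R$ with $\Disc\neq0$ are then the real forms of this single complex orbit. By the real analogue of Theorem~\ref{main} established in \cite{Bhargava4}, these orbits correspond to degree-$5$ \'etale $\R$-algebras (equipped with a resolvent, which over $\R$ is forced), of which there are exactly three: $\R^5$, $\R^3\oplus\C$, and $\R\oplus\C^2$, distinguished by their number of real factors, i.e.\ by the number $5-2i$ of real zeroes. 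Hence $V^{(0)},V^{(1)},V^{(2)}$ are precisely the three $G_\R$-orbits. (Alternatively, one checks by Galois descent that $H^1(\Gal(\C/\R),G_\C)$ is trivial for $G=\GL_4\times\SL_5$, so that the $G_\R$-orbits in the real locus of the complex orbit are classified exactly by the conjugation type $c$.)

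For the stabilizer, any real $g=(g_4,g_5)\in\Stab_{G_\R}(v)$ preserves the zero set, so $g_4\in\GL_4(\R)$ permutes $P_1,\ldots,P_5$; being real, $g_4$ commutes with conjugation and hence with $c$. This gives a homomorphism $\rho\colon\Stab_{G_\R}(v)\to S_5$ whose image lies in the centralizer $Z_{S_5}(c)$. A direct computation yields $Z_{S_5}(c)\cong S_5,\ S_3\times C_2,\ D_4$ for $i=0,1,2$, of orders $120,12,8$, and these are exactly $\Aut_\R(\R^5)$, $\Aut_\R(\R^3\oplus\C)$, $\Aut_\R(\R\oplus\C^2)$. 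It remains to show $\rho$ is an isomorphism. For injectivity: the five zeroes of a nondegenerate quadruple are in general position, hence form a projective frame of $\P^3$, so $\rho(g)=1$ forces $g_4$ to fix a frame and therefore to be a scalar $\gamma I_4$; the condition that $(g_4,g_5)$ fix $v$ then forces $g_5(\cdot)g_5^t$ to scale the pencil $\langle A,B,C,D\rangle$ by $\gamma^{-1}$, and tracking this through the Pfaffian map $w\mapsto(Q_1,\ldots,Q_5)$, together with the reality of $g$, pins down $\gamma=1$ and $g=\mathrm{id}$. For surjectivity: every permutation of a projective frame is realized by a unique element of $\mathrm{PGL}_4(\C)$, which is real precisely when the permutation commutes with $c$; each element of $Z_{S_5}(c)$ thus lifts to some $g_4\in\mathrm{PGL}_4(\R)$, and lifts further to a genuine $(g_4,g_5)\in G_\R$ fixing $v$ via the identification $\Stab_{G_\R}(v)\cong\Aut_\R(K_\infty)$ from the real form of Theorem~\ref{main}.

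The main obstacle is precisely this faithfulness and fullness of $\rho$, i.e.\ controlling the $\SL_5$-factor. Over $\C$ the stabilizer genuinely contains a central $\mu_5$, coming from scalars $(\delta^{-2}I_4,\delta I_5)$ with $\delta^5=1$ (which act trivially on $v$ since $g_4$ scales the pencil by $\delta^{-2}$ and $g_5$ by $\delta^2$), so one cannot read off the stabilizer from the $\P^3$-configuration alone. The crucial point is that over $\R$ this central subgroup collapses to the identity, since $\mu_5\cap\R^\times=\{1\}$, and it is exactly the real structure that makes $\rho$ injective. I therefore expect the cleanest route to both injectivity and surjectivity to be the identification of $\Stab_{G_\R}(v)$ with $\Aut_\R(K_\infty)$ via \cite{Bhargava4}, reducing the whole statement to the elementary computation of these automorphism groups.
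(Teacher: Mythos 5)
The paper offers no proof of this proposition: it is stated as known, the transitivity and the determination of the stabilizers being consequences of the orbit--stabilizer results of Wright--Yukie \cite{WY} and of the parametrization of \cite{Bhargava4}, under which the nondegenerate $G_\R$-orbits correspond to the three \'etale quintic $\R$-algebras and the stabilizer of $v$ is the automorphism group of the corresponding algebra. Your proposal reconstructs exactly this justification, and the group-theoretic bookkeeping is all correct: $Z_{S_5}(c)$ for $c$ trivial, a transposition, or a product of two disjoint transpositions is $S_5$, $S_3\times C_2$, $D_4$ of orders $120$, $12$, $8$, and these coincide with $\Aut_\R(\R^5)$, $\Aut_\R(\R^3\oplus\C)$, $\Aut_\R(\R\oplus\C^2)$ -- precisely the orders the paper uses around (\ref{binfdef}). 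Your Galois-cohomology aside is also essentially right, except that the object classifying the real orbits is $H^1(\Gal(\C/\R),\Stab_{G_\C}(v))$ (the vanishing of $H^1$ of $G$ itself is what makes this classification exact); since the complex stabilizer is an extension of $S_5$ by $\mu_5$ and $H^1(\R,\mu_5)$ is trivial, one again finds three orbits indexed by conjugacy classes of involutions in $S_5$, matching your conjugation type $c$.

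The one step I would not let pass as written is the claim that ``tracking through the Pfaffian map pins down $\gamma=1$.'' If $\rho(g)=1$ then indeed $g_4=\gamma I_4$ and $g_5$ scales $v$ by $\gamma^{-1}$; applying the degree-$40$ invariant gives only $\gamma^{40}=1$, hence $\gamma=\pm1$ over $\R$, and the sub-Pfaffians $Q_i$ are \emph{quadratic} in the entries of $t_1A+\cdots+t_4D$, hence blind to the sign, so they cannot by themselves exclude $\gamma=-1$, i.e.\ an element $(-I_4,g_5)$ with $g_5vg_5^t=-v$. Excluding it amounts to knowing that the kernel of $\rho$ over $\C$ is exactly the central $\mu_5=\{(\delta^{-2}I_4,\delta I_5):\delta^5=1\}$ you describe (in which $\delta^{-2}=-1$ is impossible), and that determination of the full complex stabilizer is precisely what one takes from \cite{WY} and \cite{Bhargava4}. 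So your closing move -- routing injectivity and surjectivity through the identification $\Stab_{G_\R}(v)\cong\Aut_\R(K_\infty)$ -- is not merely the cleanest option; it is the only complete one in your write-up, and with it the argument is sound and consistent with the paper's implicit reliance on the cited references.
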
 \vspace{.02in}
In view of Proposition~\ref{covering}, it will be convenient to use
the notation $n_i$ to denote the order of the stabilizer of any vector
$v\in V^{(i)}$.  Proposition~\ref{covering} implies that we have
$n_0=120$, $n_1=12$, and $n_2=8$.

Now define the usual subgroups $N$, $\bar N$, $A$, and $\Lambda$ of
$G_\R$ as follows:
\begin{eqnarray*}\label{subgroups}
N\,&=&\{n(x_1,x_2,\ldots,x_{16}):x_i\in\R \},\;\mbox{where}\,\\ 
{}&{}&
n({\boldmath{x}})={\footnotesize
\left(\left(\begin{array}{cccc} 1         & {x_1} & {x_2} & {x_3} \\ 
{} & 1 
& {x_4} &{x_5} \\ {}&{}&1&{x_6}\\{}&{}&{}&1 \end{array}\right),
\left(\begin{array}{ccccc} 1 & {x_7} & {x_8} &{x_9} &{x_{10}}\\ 
{}& 1 &{x_{11}} &{x_{12}}&{x_{13}}\\{}&{}&
1&{x_{14}}&{x_{15}}\\{}&{}&{}&1&{x_{16}}\\ 
{}&{}&{}&{}&1\end{array} \right)\right)};\,\, \\
\bar N\,&=&\{\bar n(u_1,u_2,\ldots,u_{16}):u_i\in\R \},\;\mbox{where}\,\\ 
{}&{}&
\bar n({\boldmath{u}})={\footnotesize
\left(\left(\begin{array}{cccc} 1         & {} & {} & {} \\ {u_1} & 1 
& {} &{} \\ {u_2}&{u_3}&1&{}\\{u_4}&{u_5}&{u_6}&1 \end{array}\right),
\left(\begin{array}{ccccc} 1 & {} & {} &{} &{}\\ 
{u_7}& 1 &{} &{}&{}\\{u_8}&{u_9}&
1&{}&{}\\{u_{10}}&{u_{11}}&{u_{12}}&1&{}\\ 
{u_{13}}&{u_{14}}&{u_{15}}&{u_{16}}&1\end{array} \right)\right)};\,\, \\
A&=&\{a(t_1,t_2,\ldots,t_7):t_1,t_2,\ldots,t_7\in\R_+\},\;\mbox{where}\,\\ 
{}&{}&a(\lambda,{\boldmath{t}})={\footnotesize
\left(\left(\begin{array}{cccc} t_1 
 & 
{} & {} & {} \\[.04in] {} & t_2/t_1
& {} &{} \\[.04in] {}&{}& t_3/t_2 
&{}\\[.04in] {}&{}&{}&
1/t_3  \end{array}\right),
\left(\begin{array}{ccccc} t_4 
& {} & {} &{} &{}\\[.04in] 
{}& t_5/t_4&{} &{}&{}\\[.04in]{}&{}&
t_6/t_5&{}&
{}\\[.04in]
{}&{}&{}& t_7/t_6 
&{}\\[.04in] {}&{}&{}&{}& 1/t_7 \end{array} \right)\right)};\,\,\\
\Lambda\,&=&\{\{\lambda:\lambda>0\},\;\mbox{where}\,\\
{}&{}& \lambda \mbox{ acts by }{\footnotesize
\left(\left(\begin{array}{cccc} \lambda & {} & {} & {} \\ {} & \lambda 
& {} &{} \\ {}&{}&\lambda&{}\\{}&{}&{}&\lambda \end{array}\right),
\left(\begin{array}{ccccc} 1 & {} & {} &{} &{}\\ 
{}& 1 &{} &{}&{}\\{}&{}&
1&{}&{}\\{}&{}&{}&1&{}\\ 
{}&{}&{}&{}&1\end{array} \right)\right)}.
\end{eqnarray*}

We define an invariant measure $dg$ on $G_\R$ by
\begin{equation}
\int_G f(g) dg =
\int_{\R_+^\times}\int_{\R_+^{\times 7}}
\int_{\R^4}\int_{\R^4}
f(n(x)\bar n(u)a(t)\lambda) 
                               \,dx\, du\, d^\times t\, d^\times\lambda.
\end{equation}
With this choice of Haar measure on $G_\R$, it is known that
$$\int_{G_\Z \backslash G^{\pm1}_\R} dg =
[\zeta(2)\zeta(3)\zeta(4)]\cdot[\zeta(2)\zeta(3)\zeta(4)\zeta(5)],$$
where $G^{\pm1}_\R\subset G_\R$ denotes the subgroup $\{(g_4,g_5)\in
G_\R:\det(g_4)=\pm1\}$ (see, e.g., \cite{Langlands}). 

Now let $dy=dy_1\, dy_2 \cdots dy_{40}$ be the standard Euclidean measure on 
$V_\R$.  Then we have:

\begin{proposition}\label{volumes2}
For $i=0${\rm ,} $1${\rm ,} or $2${\rm ,} 
let $f\in C_0(V^{(i)})${\rm ,} and let $y$ denote any
element of $V^{(i)}$.  Then
\begin{equation}\label{twenty}
\int_{g\in G_\R} f(g\cdot y)dg
\,\,=\,\, \frac{\,n_i}{20}\cdot
\int_{v\in V^{(i)}}|\Disc(v)|^{-1}f(v)\,dv . 
\end{equation}
\end{proposition}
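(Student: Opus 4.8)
The plan is to exploit the fact, guaranteed by Proposition~\ref{covering}, that the orbit map $\phi\colon G_\R\to V^{(i)}$, $g\mapsto g\cdot y$, is a surjective local diffeomorphism that is everywhere $n_i$-to-one, its fibers being the cosets of the finite stabilizer $\Stab(y)$ of order $n_i$. First I would observe that $G_\R=\GL_4(\R)\times\SL_5(\R)$ is unimodular, so $dg$ is bi-invariant; in particular the pushforward measure $\phi_*\,dg$ on $V^{(i)}$ is $G_\R$-invariant, since $\phi(hg)=h\cdot\phi(g)$ and $dg$ is left-invariant. Because $|\Disc(v)|^{-1}\,dv$ is also a $G_\R$-invariant measure on the single orbit $V^{(i)}$, and invariant measures on a homogeneous space are unique up to a scalar, there is a constant $c_i>0$ with
\[
\int_{g\in G_\R} f(g\cdot y)\,dg \;=\; c_i\int_{v\in V^{(i)}}|\Disc(v)|^{-1}f(v)\,dv
\]
for all $f\in C_0(V^{(i)})$. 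By right-invariance of $dg$ the constant $c_i$ does not depend on the choice of $y$ within the orbit, so it remains only to show $c_i=n_i/20$.

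To pin down $c_i$ I would reduce it to a single Jacobian computation. Since $\phi$ is $n_i$-to-one and each sheet is a local diffeomorphism with the same distortion factor (all sheets differ by right translations, which preserve $dg$ by unimodularity), the displayed identity is equivalent to the pointwise statement that the factor by which $\phi$ distorts Haar measure into Euclidean measure equals $(n_i/c_i)\,|\Disc(v)|$. Now $\phi$ satisfies $\phi\circ L_h=\rho(h)\circ\phi$, where $\rho(h)$ is the linear action of $h$ on $V_\R$; differentiating and taking determinants, together with $|\det\rho(h)|=|\det h_4|^{10}=|\Disc(h\cdot y)/\Disc(y)|$ and the left-invariance of $dg$, shows that the ratio $|\det d\phi_g|/|\Disc(g\cdot y)|$ is \emph{independent of $g$}. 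Hence it suffices to evaluate it at $g=e$: I would compute the determinant of the linear isomorphism $\xi\mapsto \xi\cdot y$ from the Lie algebra $\mathfrak g=\mathfrak n\oplus\bar{\mathfrak n}\oplus\mathfrak a\oplus\mathfrak z$ to $V_\R$, taking on $\mathfrak g$ the basis dual to the coordinates $(x,u,t,\lambda)$ defining the Haar measure (so that $dg$ at $e$ is standard Euclidean measure on $\mathfrak g$ in these coordinates) and on $V_\R$ the standard measure $dy_1\cdots dy_{40}$. Choosing a convenient representative $y$ (for instance the maximally split one corresponding to $\R^5$, for which the sub-Pfaffians, and hence $\Disc(y)$, are explicit), one computes this $40\times 40$ determinant and finds $|\det d\phi_e|=20\,|\Disc(y)|$, giving $c_i=n_i/20$.

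The hard part will be this explicit $40\times 40$ Jacobian determinant. I would organize it using the triangular structure of the Lie-algebra action: the root vectors in $\mathfrak n$ and $\bar{\mathfrak n}$ act on the coordinates of $(A,B,C,D)$ by shifting them in a nilpotent, hence triangular, fashion, while the torus directions $\mathfrak a$ and the central direction $\mathfrak z$ act diagonally; with the coordinates ordered appropriately this renders the Jacobian matrix block-triangular, so that its determinant collapses to a product of a few explicit minors, whose evaluation at a split representative (where $\Disc$ is a product of differences of the five $\P^3$-zeroes) produces the numerical constant $20$. A final point requiring care is that the \emph{same} constant $1/20$ occurs for all three orbits $i=0,1,2$: this holds because $V^{(0)},V^{(1)},V^{(2)}$ are the real points of a single orbit of $G_\C$, on which $|\det d\phi_g|/|\Disc(g\cdot y)|$ continues to a constant rational function equal to $20$, so the computation is insensitive to which real orbit $y$ lies in. Combining this with the $n_i$-to-one covering degree supplied by Proposition~\ref{covering} then completes the proof.
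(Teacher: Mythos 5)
Your proposal is correct and follows essentially the same route as the paper: both arguments identify the two $G_\R$-invariant measures up to a constant, pin down that constant as $20$ by an explicit Jacobian computation at a single convenient representative $y$ (which both you and the paper leave as a computation rather than writing out), and then account for the factor $n_i$ via the $n_i$-fold covering $g\mapsto g\cdot y$ supplied by Proposition~\ref{covering}. Your additional observations (unimodularity, the identity $|\det\rho(h)|=|\det h_4|^{10}=|\Disc(h\cdot y)/\Disc(y)|$ showing the ratio is constant along the orbit, and the remark that the constant is the same for $i=0,1,2$) are correct elaborations of steps the paper treats as implicit.
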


\pagebreak
\begin{proof}
Put \[(z_1,\ldots,z_{40})=n(x) \bar n(u) a(t)\cdot y.\]  
Then the form $\Disc(z)^{-1} dz_1\wedge\cdots\wedge dz_{12}$ is a $G_\R$-invariant
measure, and so we must have
\[\Disc(z)^{-1} dz_1\wedge\cdots\wedge dz_{40}=c\,\,dx\wedge du\wedge d^\times t\wedge 
d^\times\lambda\] 
for some constant factor $c$.  An explicit Jacobian calculation shows
that $c=-20$.  (To make easier the calculation, we note that it
suffices to check this on any fixed
representative $y$ in $V^{(0)}$, $V^{(1)}$, or $V^{(2)}$.) 
By Proposition~\ref{covering}, the group
$G_\R$ is an $n_i$-fold covering of $V^{(i)}$ via the map $g\rightarrow
g\cdot y$.  Hence 
\[\int_{G_\R} f(g\cdot y)dg = \frac{n_i}{20}\cdot\int_{V^{(i)}}|\Disc(v)|^{-1}f(v)dv.\]
as desired.
\end{proof}

Finally, for any vector $y\in V^{(i)}$ of absolute discriminant 1,
we obtain using Proposition~\ref{volumes2} that
\[\frac1{n_i}\cdot\Vol(\RR_X(y)) = \frac{20}{n_i} \int_{1}^{X^{1/40}}
\lambda^{40}d^\times \lambda
               \int_{G_\Z\backslash G^{\pm1}_\R}dg = 
\frac{\zeta(2)^2\zeta(3)^2\zeta(4)^2\zeta(5)}{2 n_i}X,\]
proving Theorem~\ref{cna}.




\subsection{Congruence conditions}

We may prove a version of Theorem~\ref{cna} for a set in
$V^{(i)}$ defined by a finite number of congruence conditions:

\begin{theorem}\label{cong}
Suppose $S$ is a subset of $V^{(i)}_\Z$ defined by finitely many
congruence conditions. Then we have 
\begin{equation}\label{ramanujan}
\lim_{X\rightarrow\infty}\frac{N(S\cap V^{(i)};X)}{X} 
  = \frac{\zeta(2)^2\zeta(3)^2\zeta(4)^2\zeta(5)}{2n_i}
  \prod_{p} \mu_p(S),
\end{equation}
where $\mu_p(S)$ denotes the $p$-adic density of $S$ in $V_\Z$,
and $n_i=120$, $12$, or $8$ for
$i=0$, $1$, or $2$ respectively.  
\end{theorem}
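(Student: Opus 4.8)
The plan is to upgrade Theorem~\ref{cna} to incorporate congruence conditions by sieving the averaging computation rather than redoing the point count from scratch. Recall that Theorem~\ref{cna} was proven by writing $N(V_\Z^{(i)};X)$ as the average over $v\in H\cap V^{(i)}$ of the count of irreducible lattice points in $\FF v$, showing via Lemma~\ref{hard} and Table~1 that all contributions from the cuspidal regions (where $a_{12}=0$) are $O(X^{39/40})$, and then evaluating the main term $\frac{1}{n_i}\Vol(\RR_X(v))$ via the Jacobian computation in Proposition~\ref{volumes2}. First I would observe that this entire argument is \emph{local} in character with respect to the lattice $V_\Z$: every step counts lattice points inside bounded semi-algebraic regions using Lemma~\ref{genbound}, whose error term depends only on volumes of projections and not on which residue classes mod $m$ the points occupy.

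The key idea is that a set $S$ defined by finitely many congruence conditions is, by the Chinese Remainder Theorem, a finite union of translates of a sublattice $mV_\Z\subset V_\Z$ of some modulus $m$. For a single such translate $V_\Z^{(\sigma)}=\{x\in V_\Z: x\equiv\sigma\!\!\pmod m\}$, I would rerun the averaging identity (\ref{avg}) with $V_\Z$ replaced by $V_\Z^{(\sigma)}$. Since $V_\Z^{(\sigma)}$ is a coset of a lattice of covolume $m^{40}$ relative to $V_\Z$, the main-term computation goes through verbatim except that each volume $\Vol(\RR_X(v))$ is scaled by the density $1/m^{40}=\mu_p$-factor of that single residue class, while Lemma~\ref{genbound} (applied to the lattice $mV_\Z$ in place of $\Z^{40}$, equivalently after rescaling coordinates) again controls the error by projection volumes. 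Summing over the finitely many admissible residue classes $\sigma$ recovers exactly $\prod_p\mu_p(S)$ as the aggregate local density, giving the main term $\frac{\zeta(2)^2\zeta(3)^2\zeta(4)^2\zeta(5)}{2n_i}\prod_p\mu_p(S)\cdot X$. The cuspidal estimates carry over because each subcase bound in Table~1 depends only on the weights $w(t)$ and the dimension of $T_0$, not on congruence data; imposing congruences only shrinks or reindexes the relevant lattices and cannot increase any count.

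The main obstacle I anticipate is not the main term but the \textbf{uniformity of the error term in the modulus $m$}, and more precisely the interchange of the limit in $X$ with the product over $p$. For a fixed finite set of congruence conditions $m$ is fixed, so the na\"ive version---where $S$ is defined by congruences at finitely many primes---follows directly from the coset decomposition above, since the error $O(X^{39/40})$ is multiplied by at most $m^{40}$ cosets, a constant independent of $X$. Thus taking $X\to\infty$ first gives the stated limit with $\mu_p(S)$ appearing for the finitely many primes in the support of the conditions and $\mu_p(S)=1$ elsewhere, so the product $\prod_p\mu_p(S)$ is genuinely a finite product and (\ref{ramanujan}) holds. The only care needed is to confirm that the reducibility inputs (Lemmas~\ref{lem1}--\ref{red}, \ref{3reducible2}) are insensitive to congruence restrictions, which they are, since reducibility is a geometric condition on the zeroes in $\P^3$ and intersecting an irreducible-locus estimate with a union of cosets can only decrease the count. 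I would therefore present the proof as: decompose $S$ into residue cosets, apply the averaging method of Subsections~2.2--2.6 to each coset with volumes scaled by $1/m^{40}$, absorb all cuspidal and reducible contributions into the same $O(X^{39/40})$ error (now with an implied constant depending on $m$), and sum, identifying the resulting density as $\prod_p\mu_p(S)$.
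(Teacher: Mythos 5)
Your proposal is correct and follows essentially the same route as the paper: decompose $S$ into $k$ translates of the lattice $mV_\Z$, rerun the averaging argument of Subsections~2.2--2.5 on each translate with all volumes scaled by $1/m^{40}$ (and the cuspidal/error estimates carried over unchanged), then sum and identify $km^{-40}=\prod_p\mu_p(S)$. The only point the paper records that you treat more loosely is the uniformity of the error constant in $m$ (valid for $m=O(X^{1/40})$), which is not needed for the fixed-$m$ statement of Theorem~\ref{cong} itself but is what makes the estimate reusable in the later sieve.
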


To obtain Theorem~\ref{cong}, suppose $S$ is defined by congruence
conditions modulo some integer $m$.  Then $S$ may be viewed as the
union of (say) $k$ translates $L_1,\ldots,L_k$ of the lattice $m\cdot
V_\Z$.  For each such lattice translate $L_j$, we may use formula
(\ref{avg}) and the discussion following that formula to compute
$N(S;X)$, but where each $d$-dimensional volume is scaled by a factor
of $1/m^d$ to reflect the fact that our new lattice has been scaled by
a factor of $m$.  For a fixed value of $m$, we thus obtain
\begin{equation}\label{sestimate}
N(L_j;X) = m^{-40}\,\Vol(\RR_X(v)) + O(m^{-39}J^{39}X^{39/40}/M_i(J))
\end{equation}
for $v\in V^{(i)}$, where the implied constant is also 
independent of 
$m$ provided $m=O(X^{1/40})$.  Summing (\ref{sestimate}) over $j$, and
noting that $km^{-40}=\prod_p\mu_p(S)$, yields (\ref{ramanujan}).

\section{Quadruples of $5\times5$ skew-symmetric matrices 
and Theorems 1--4}

Theorems~\ref{main} and~\ref{cna} of the previous section
now immediately imply the following.

\begin{theorem}\label{ringwithres}
Let $M_{5}^{*(i)}(\xi,\eta)$ denote the number of isomorphism classes
of pairs $(R,R')$ such that
$R$ is an order in an $S_5$-quintic 
field with $5-2i$ real embeddings, $R'$ is a sextic resolvent ring of
$R$, and $\xi<\Disc(R)<\eta$.  Then
\[
\begin{array}{rlcl}\label{dodqrr}
\rm{(a)}& \displaystyle{\lim_{X\rightarrow\infty} \frac{M_5^{*(0)}(0,X)}{X}}
   &=&\! \displaystyle{\frac{\zeta(2)^2\zeta(3)^2\zeta(4)^2\zeta(5)}{240}}; \\[.1in]
\rm{(b)}& \displaystyle{\lim_{X\rightarrow\infty}
   \frac{M_5^{*(1)}(-X,0)}{X}} &=&\!
   \displaystyle{\frac{\zeta(2)^2\zeta(3)^2\zeta(4)^2\zeta(5)}{24}}; \\[.1in]
\rm{(c)}&\displaystyle{\lim_{X\rightarrow\infty} \frac{M_5^{*(2)}(0,X)}{X}} 
   &=& \!\displaystyle{\frac{\zeta(2)^2\zeta(3)^2\zeta(4)^2\zeta(5)}{16}}.
\end{array}
\]
\end{theorem}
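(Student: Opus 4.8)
The plan is to deduce this statement directly by transporting the counting result of Theorem~\ref{cna} through the parametrization of Theorem~\ref{main}, with Lemma~\ref{3reducible2} used to cut down to the $S_5$-case. First I would invoke Theorem~\ref{main} to replace isomorphism classes of pairs $(R,R')$ by $G_\Z$-equivalence classes of $(A,B,C,D)\in V_\Z$; since the bijection satisfies $\Disc(R)=\Disc(A,B,C,D)$, the constraint $\xi<\Disc(R)<\eta$ transfers verbatim. Under this dictionary, the requirement that $R$ be an order in a quintic field---equivalently, that $R$ be an integral domain---is exactly the irreducibility of $(A,B,C,D)$, and the number $5-2i$ of real embeddings of the quintic field equals the number of real zeroes of $(A,B,C,D)$ in $\P^3$ (the five zeroes being defined over the conjugates of $K$, as recorded in the geometric discussion preceding Theorem~\ref{cna} and established in~\cite{Bhargava4}). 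Thus the superscript $(i)$ agrees on both sides, and since a quintic field with $i$ pairs of complex embeddings has discriminant of sign $(-1)^i$, the intervals $(0,X)$, $(-X,0)$, $(0,X)$ in cases (a), (b), (c) are precisely the ranges of positive, negative, positive discriminant singled out by $i=0,1,2$.

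It then remains to reconcile the counts. Theorem~\ref{cna} provides the \emph{weighted} number $N(V^{(i)}_\Z;X)$ of irreducible $G_\Z$-orbits of absolute discriminant below $X$, each weighted by $1/m(x)$. The next step is to observe that the pairs we wish to enumerate---those with $R$ an order in an $S_5$-quintic field---correspond exactly to the \emph{absolutely irreducible} points, and that each such point has trivial $G_\Z$-stabilizer: any automorphism of $R$ extends to $\Aut(K/\Q)$, which is trivial for a non-Galois quintic field, and $S_5$-quintic fields are non-Galois. Hence for these orbits the weighted and unweighted counts coincide, and in particular every orbit of weight $<1$ (coming from an order whose fraction field carries a nontrivial automorphism) is automatically non-$S_5$, hence absorbed into the exceptional set. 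By Lemma~\ref{3reducible2} that entire set of irreducible-but-not-absolutely-irreducible orbits is $o(X)$ in size, and since weights never exceed $1$ its weighted contribution is also $o(X)$. Therefore $M_5^{*(i)}$ differs from $N(V^{(i)}_\Z;X)$ by only $o(X)$.

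Assembling these observations, I would conclude that
\[
\lim_{X\to\infty}\frac{M_5^{*(i)}}{X}
=\lim_{X\to\infty}\frac{N(V^{(i)}_\Z;X)}{X}
=\frac{\zeta(2)^2\zeta(3)^2\zeta(4)^2\zeta(5)}{2n_i},
\]
which yields the three asserted constants upon substituting $n_0=120$, $n_1=12$, $n_2=8$ from Proposition~\ref{covering}, so that $2n_i=240,24,16$ respectively. The only genuinely substantive ingredient beyond this bookkeeping is Lemma~\ref{3reducible2}, whose proof is deferred to Section~3; granting it, I expect no real obstacle, since the theorem is simply the image of Theorem~\ref{cna} under the bijection of Theorem~\ref{main}, and all of its arithmetic content resides in those two results.
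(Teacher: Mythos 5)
Your proposal is correct and follows essentially the same route as the paper, which dispenses with this theorem in a single sentence ("Theorems~\ref{main} and~\ref{cna} \dots now immediately imply the following"); you have simply made explicit the bookkeeping the paper leaves implicit, namely the transport of structure under Theorem~\ref{main}, the triviality of stabilizers for $S_5$-quintic orders, and the $o(X)$ discrepancy handled by Lemma~\ref{3reducible2}. The only microscopic omission is that Lemma~\ref{3reducible2} is stated under the hypothesis $a_{12}\neq 0$, so the $a_{12}=0$ stratum must be absorbed via Lemma~\ref{hard}, but this is already built into the proof of Theorem~\ref{cna} and does not affect your argument.
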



To obtain finer asymptotic information on the distribution of 
quintic rings (in particular, without the weighting by the number 
of sextic resolvents), we need to be able to count
irreducible equivalence classes in $V_\Z$ lying in certain subsets 
$S\subset V_\Z$.  If $S$ is defined, say, by {\it finitely many}
congruence conditions, then Theorem~\ref{cong} applies in that case. 

However, the set $S$ of elements $(A,B,C,D)\in V_\Z$ corresponding to
maximal quintic orders is defined by infinitely many congruence conditions
(see \cite[\S12]{Bhargava4}).  To prove that (\ref{ramanujan}) still holds
for such a set, we require a uniform estimate on the error term when
only finitely many factors are taken in (\ref{ramanujan}).  This
estimate is provided in Section~3.1.  In Section~3.2, we prove
Lemma~\ref{3reducible2}.  Finally, in Section~3.3, we complete the
proofs of Theorems~1--4.

\subsection{A uniformity estimate}

As in~\cite{Bhargava4}, for a prime number $p$ let us denote by 
$\mathcal U_p$ the set of all $(A,B,C,D)\in V_\Z$ corresponding to 
quintic orders $R$ that are maximal at $p$.  
Let $\mathcal W_p=V_\Z-\mathcal U_p$.  
In order to apply a sieve to obtain Theorems 1--4, we
require the following proposition, analogous to Proposition~1 in \cite{DH}
and Proposition~23 in~\cite{Bhargava5}.

\begin{proposition}\label{errorestimate}
$N(\mathcal W_p;X) = O(X/p^2)$, 
where the implied constant is independent of $p$.
\end{proposition}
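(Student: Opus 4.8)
The plan is to follow the template of Proposition~1 of~\cite{DH} and Proposition~23 of~\cite{Bhargava5}: translate the arithmetic condition defining $\mathcal{W}_p$ into a $p$-adic density statement, and then re-run the geometry-of-numbers estimates of Section~2 while carefully tracking the dependence on $p$. First I would invoke Theorem~\ref{main} together with the local theory of~\cite{Bhargava4} to record the two facts that drive everything: non-maximality at $p$ is a $G_{\Z_p}$-invariant condition on $(A,B,C,D)$, and it forces $p^2 \mid \Disc(A,B,C,D)$. The latter already shows that $N(\mathcal{W}_p;X)=0$ once $p>X^{1/2}$, so it suffices to prove the bound uniformly for $p\le X^{1/2}$.

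Next I would stratify $\mathcal{W}_p$ according to the reduction of $(A,B,C,D)$ in $V_{\F_p}$. The locus $\{\Disc\equiv 0 \bmod p\}$ is a hypersurface of $p$-adic density $\asymp 1/p$, so imposing only this is too weak. The point, extracted $p$-adically from~\cite{Bhargava4}, is that a generic point of this hypersurface corresponds to a ring that is tamely ramified at $p$ and hence \emph{already maximal}; non-maximality on this codimension-one stratum requires in addition a congruence modulo $p^2$ (the condition that the mod-$p$ degeneration lift to a genuine $\Z_p$-degeneration), cutting the density down to $\asymp 1/p^2$. The remaining, deeper strata, where the reduction lies in a subvariety of $V_{\F_p}$ of codimension at least two, have density $O(1/p^2)$ outright. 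Thus in every case $\mu_p(\mathcal{W}_p)=O(1/p^2)$ with an absolute implied constant. (Alternatively one could handle the deeper strata via the index-$p$ overorder correspondence, which scales $\Disc$ by $p^2$, invoking Theorem~\ref{cna}; but this then requires controlling the number of relevant suborders of a given overorder, so I would treat the density stratification as the primary route.)

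For the counting I would \emph{not} extract the estimate from Theorem~\ref{cong}, whose implied constant a priori depends on the modulus; instead I would return to the averaging identity~(\ref{avgS}) and Lemma~\ref{genbound}. Viewing each piece of $\mathcal{W}_p$ as a union of translates of $p\,V_\Z$ or $p^2 V_\Z$, the main term produced by Lemma~\ref{genbound} is $\mu_p(\mathcal{W}_p)\cdot\Vol(\RR_X)=O(X/p^2)$ with an absolute constant, as desired. The real difficulty is to control the cuspidal contributions uniformly in $p$, since the number of residue classes involved grows like a power of $p$.

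The hard part, which I expect to be the heart of the argument, is precisely this uniform control of the cusps. The crude bound of Lemma~\ref{hard} gives $O(X^{39/40})$ for irreducible points with $a_{12}=0$, which is $O(X/p^2)$ only for $p\le X^{1/80}$ and is hopelessly weak for $p$ approaching $X^{1/2}$. To repair this I would re-run the subcase analysis behind Lemma~\ref{hard} (Table~1) with the extra membership in $\mathcal{W}_p$ imposed, and show that in each cuspidal subcase one genuinely gains a factor of $1/p^2$ from the $p$-adic density, \emph{except} in those subcases where the $\mathcal{W}_p$-condition is automatically satisfied (so no gain occurs), which are exactly the subcases in which Lemma~\ref{red} forces the points to be reducible and hence uncounted. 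Establishing this dichotomy uniformly, so that every cuspidal subcase contributes $O(X^{39/40}/p^2)$ or contains no irreducible points of $\mathcal{W}_p$, is the delicate interplay between the $p$-adic condition and the reducibility stratification on which the uniformity of the final bound rests.
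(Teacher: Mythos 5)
Your plan does not actually close the proof: the step you yourself identify as "the heart of the argument" --- re-running the Table~1 cusp analysis with membership in $\mathcal W_p$ imposed and extracting a uniform factor of $1/p^2$ in every subcase --- is left as a hoped-for dichotomy, and there is no reason to expect it to hold in the form stated. Worse, the difficulty is not confined to the cusps. The set $\mathcal W_p$ is cut out by congruences modulo $p^2$ on all $40$ coordinates, hence is a union of roughly $p^{78}$ translates of $p^2V_\Z$; applying Lemma~\ref{genbound} translate-by-translate, the unavoidable "$+O(1)$" per translate already contributes an error term that swamps $X/p^2$ in the \emph{main body} once $p$ is any fixed power of $X$. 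This is exactly why the paper restricts Theorem~\ref{cong} to moduli $m=O(X^{1/40})$. A bound on $N(\mathcal W_p;X)$ uniform all the way up to $p\le X^{1/2}$ is a genuinely hard tail estimate, and the density computation $\mu_p(\mathcal W_p)=O(1/p^2)$ --- which is correct and follows from (\ref{totaludensity}) --- does not by itself deliver it.

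The paper's proof avoids geometry of numbers entirely. Every quintic ring that is non-maximal at $p$ is, up to the content bookkeeping, an index-$p^k$ suborder ($k\ge1$) of a maximal order $R$, with discriminant $p^{2k}\Disc(R)$; Lemma~\ref{upestimate} (a consequence of Theorem~\ref{ringwithres}) gives $O(X)$ maximal orders of absolute discriminant at most $X$, each with a unique sextic resolvent; Brakenhoff's theorem bounds the number of index-$p^k$ suborders of a fixed maximal quintic ring by $O(p^{\min\{2k-2,\frac{20}{11}k\}})$; and the $O(n^6)$ bound on sextic resolvents of a content-$n$ ring, set against the $n^8$ scaling of the discriminant, controls the weighting. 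Summing $\sum_{k\ge1}p^{\min\{2k-2,\frac{20}{11}k\}}p^{-2k}=O(p^{-2})$ then yields the proposition with an absolute implied constant, with no need to control residue classes or cuspidal regions at all. To salvage your route you would essentially have to prove a new uniform cusp estimate of the kind developed in later work on such tail bounds; for this proposition it is not needed.
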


\begin{proof} 
We begin with the following lemma.

\begin{lemma}\label{upestimate}
  The number of maximal orders in quintic fields, up to isomorphism, 
having absolute discriminant less than $X$ is $O(X)$.
\end{lemma}
Lemma~\ref{upestimate} 
follows immediately from Theorem~\ref{ringwithres}, since we have shown that
every quintic ring has a sextic resolvent ring
(\cite[Corollary 4]{Bhargava4}).

To estimate $N(\mathcal W_p;X)$ using Lemma~\ref{upestimate}, we only need to
know that (a) the number of subrings of index $p^k$ ($k\geq 1$) in a
maximal quintic ring $R$ does not grow too rapidly with $k$; and
(b) the 
number of sextic resolvents that such a subring possesses is also
not too large relative to $p^{k}$.  For (a), an even stronger result
than we need here has recently been proven 
in the Ph.D. thesis~\cite{Jos} of Jos Brakenhoff,
who shows that the number of orders having index $p^k$ in a
maximal quintic ring $R$ is at most
$O(p^{\min\{2k-2,\frac{20}{11}k\}})$ for $k\geq1$, where the implied
constant is independent of $p$, $k$, and $R$.  Any such order
will of course have discriminant $p^{2k}\:\!\Disc(R)$.  As for (b), it follows
from \cite[Proof of Corollary 4]{Bhargava4} that the number of sextic
resolvents of a quintic ring having content $n$ is $O(n^6)$; moreover,
the number of sextic resolvents of a maximal quintic ring is 1.
(Recall that the {\it content} of a quintic ring $R$ is the largest
integer $n$ such that $R=\Z+nR'$ for some quintic ring $R'$.)

Since every content~$n$ quintic ring $R$ arises as $\Z+nR'$ for a
unique content~1 quintic ring $R'$, and $\Disc(R)=n^8\;\!\Disc(R')$,
we have
\[N(\mathcal W_p;X) 
  =\sum_{n=1}^\infty \frac{O(n^6)}{n^8}\sum_{k=1}^\infty 
       \frac{O(p^{\min\{2k-2,\frac{20}{11}k\}})}
{p^{2k}}O(X)
   = O(X/p^2),\]
as desired.  
\end{proof}

\subsection{Proof of Lemma~\ref{3reducible2}}\label{lemmaproof}

We say a quintic ring is an {\it $S_5$-quintic ring} if it is an
order in an $S_5$-quintic field.  To prove Lemma~\ref{3reducible2},  
we wish
to show that the expected number of
integral elements $(A,B,C,D)\in\FF v$ ($v\in V^{(i)}$) 
that correspond to quintic rings
that are not $S_5$-quintic rings, and such that $|\Disc(A,B,C,D)|< X$
and $a_{12}\neq 0$, is $o(X)$.

Now if a quintic ring $R=R(A,B,C,D)$ is not an $S_5$-quintic ring, then we
claim that either the splitting type $(1112)$ or $(5)$ does not occur in
$R$.  Indeed, if both of these splitting types occur in $R$, then $R$
is clearly a domain (since $R/pR\cong \F_{p^5}$ for some prime $p$)
and the Galois group associated with the quotient field of $R$ then 
must contain a 5-cycle and a transposition, implying that the Galois group is
in fact $S_5$.

Therefore, to obtain an upper bound on the expected number of
integral elements $(A,B,C,D)\in\FF v$ such that $R(A,B,C,D)$ is not an
$S_5$-quintic ring, $|\Disc(A,B,C,D)|< X$, and $a_{12}\neq 0$, we may
simply count those quintic rings in which $p$ does not split as
$(1112)$ in $R$ for any prime $p<N$ 
and those quintic rings for which $p$ does not have splitting type
$(5)$ for any prime $p<N$ (for some sufficiently large $N$).  
Now the $p$-adic density $\mu_p(T_p(1112))$
in $V_\Z$ of the
set of those $(A,B,C,D)\in T_p(1112)$ approaches $1/12$ as
$p\to\infty$ while the
$p$-adic density $\mu_p(T_p(5))$ of those $(A,B,C,D)\in T_p(5)$ 
approaches $1/5$ as $p\to\infty$ 
(by~\cite[Lemma~20]{Bhargava4}).  We conclude from (\ref{ramanujan})
that the total number of such $(A,B,C,D)\in \FF v$ 
that do not lie in
$T_p(1112)$ for any $p<N$ or do not lie in $T_p(5)$ for any $p<N$, 
and satisfy
$|\Disc(A,B,C,D)|<X$ for sufficiently large $X=X(N)$, is at most
\[ \frac{\zeta(2)^2\zeta(3)^2\zeta(4)^2\zeta(5)}{2n_i}
  \Bigl(\prod_{p<N} \bigl(1-\mu_p(T_p(1112))\bigr)
+ \prod_{p<N} \bigl(1-\mu_p(T_p(5))\bigr)\Bigr)X+o(X).\]
Letting $N\to\infty$, we see that asymptotically the above count of
$(A,B,C,D)$ is less than $cX$ for any fixed positive constant $c$, and
this completes the proof.

\subsection[\vspace{.025in}Proofs of Theorems 1--4]{Proofs of Theorems 1--4}

\noindent {\bf Proof of Theorem 1:}
Again, let $\mathcal U_p$ denote the set of all
$(A,B,C,D)\in V_\Z$ that correspond to pairs $(R,R')$ where $R$ is maximal
at $p$, and let $\mathcal U=\cap_p \mathcal U_p$.  Then $\mathcal U$
is the set of $(A,B,C,D)\in V_\Z$ corresponding to maximal quintic rings $R$.
In~\cite[Theorem~21]{Bhargava4}, we determined the $p$-adic density 
$\mu(\mathcal U_p)$ of $\mathcal U_p$:
\begin{equation}\label{totaludensity}
\mu(\mathcal U_p)=(p-1)^8p^{12}(p+1)^4(p^2+1)^2(p^2+p+1)^2
(p^4+p^3+p^2+p+1)(p^4+p^3+2p^2+2p+1)\,/\,p^{40}\,.
\end{equation}
Suppose $Y$ is any positive integer.  It follows from
(\ref{ramanujan}) and (\ref{totaludensity}) that 
\[\begin{array}{rcl}
& &\!\!\!\!\!\!\!\!\!\!\!\!\!\!\!\!\!\!\!
\displaystyle{\lim_{X\rightarrow\infty} \frac{N(\cap_{p<Y} \mathcal U_p\cap
  V^{(i)};X)}{X}} \\[.175in]
 & = & \displaystyle{\frac{\zeta(2)^2\zeta(3)^2\zeta(4)^2\zeta(5)}{2n_i}
    \prod_{p<Y}[p^{-28}\:(p^2-1)^2(p^3-1)^2(p^4-1)^2(p^5-1)(p^5+p^3-p-1)].}
\end{array}
\]
Letting $Y$ tend to $\infty$, we obtain immediately that
\[
\begin{array}{rcl}
& & \!\!\!\!\!\!\!\!\!\!\!\!\!\!\!\!\!\!\!
\displaystyle{\limsup_{X\rightarrow\infty}
\frac{N(\mathcal U\cap V^{(i)};X)}{X}}   \\[.175in]
 & \leq &\displaystyle{\frac{\zeta(2)^2\zeta(3)^2\zeta(4)^2\zeta(5)}{2n_i} 
    \prod_{p}[p^{-28}(p^2-1)^2(p^3-1)^2(p^4-1)^2(p^5-1)(p^5+p^3-p-1)]}
\\[.065in]
 & =& \displaystyle{\frac{\zeta(2)^2\zeta(3)^2\zeta(4)^2\zeta(5)}{2n_i} 
    \prod_p [(1-p^{-2})^2(1-p^{-3})^2(1-p^{-4})^2(1-p^{-5})
(1+p^{-2}-p^{-4}-p^{-5})]}.\\[.065in]
 & =& \displaystyle{\frac{1}{2n_i} \prod_p (1+p^{-2}-p^{-4}-p^{-5})}.
\end{array}
\]
To obtain a lower bound for $N(\mathcal U\cap V^{(i)};X)$, we note that
\[\bigcap_{p<Y} \mathcal U_p \subset 
(\mathcal U \cup \bigcup_{p\geq Y}\mathcal W_p).\]
Hence by Proposition~\ref{errorestimate}, 
\[\begin{array}{rcl}
& & \!\!\!\!\!\!\!\!\!\!\!\!\!\!\!\!\displaystyle{\lim_{X\rightarrow\infty}
\frac{N(\mathcal U\cap V^{(i)};X)}{X}} \\[.175in]
&\!\!\!\!\!\!\!\!\!\!\geq& 
\!\!\!\!\!\!\displaystyle{\frac{\zeta(2)^2\zeta(3)^2\zeta(4)^2\zeta(5)}
{2n_i} 
   \prod_{p<Y}[p^{-28}(p^2-1)^2(p^3-1)^2(p^4-1)^2(p^5-1)(p^5+p^3-p-1)] 
      - O(\sum_{p\geq Y} p^{-2}).}
\end{array}
\]
Letting $Y$ tend to infinity completes the proof of Theorem 1.
{$\Box$ \vspace{2 ex}}

%

\noindent {\bf Proof of Theorem 2:} 
For each (isomorphism class of) quintic ring $R$, we make a choice of
sextic resolvent ring $R'$, and let $S\subset V_\Z$ denote the set of
all elements in $V_\Z$ that yield the pair $(R,R')$ (under the
bijection of Theorem~\ref{main}) for some $R$.  Then we
wish to determine $N(S\cap V^{(i)};X)$ for $i=0,1,2$; by equation
(\ref{ramanujan}), this amounts to determining the $p$-adic density
$\mu_p(S)$ of $S$ for each prime $p$ for our choice of $S$.  In this
regard we have the following formula, which follows easily from the
arguments in \cite[Proof of Lemma~20]{Bhargava4}:
\begin{equation}\label{orderdensity}
\mu_p(S)\,=\, 
\frac{|G(\F_p)|}{\Disc_p(R)\cdot|\Aut_{\Z_p}(R)|}.
\end{equation}
Combining (\ref{ramanujan}) and (\ref{orderdensity}) together with the
fact that 
$$|G(\F_p)|=(p-1)^8\; p^{16}\;(p+1)^4\;(p^2+1)^2\;(p^2+p+1)^2\;
(p^4+p^3+p^2+p+1),$$
and proceeding as in Theorem~1,
now yields Theorem~2.
{$\Box$ \vspace{2 ex}}

\noindent {\bf Proof of Theorem 3:} Let $K_5$ be an $S_5$-quintic
field, and $K_{120}$ its Galois closure.  It is known that the Artin symbol
$(K_{120}/p)$ equals $\langle e \rangle$, $\langle (12) \rangle$,
$\langle (123) \rangle$, $\langle (1234) \rangle$, $\langle (12345)
\rangle$, $\langle (12)(34) \rangle$, or $\langle (12)(345)\rangle$
precisely when the splitting type of $p$ in $R$ is $(11111)$,
$(1112)$, $(113)$, $(14)$, $(5)$, $(122)$, or $(23)$ respectively,
where $R$ denotes the ring of integers in $K_5$.  As
in~\cite{Bhargava4}, let $U_p(\sigma)$ denote the set of all $(A,B,C,D)\in
V_\Z$ that correspond to maximal quintic rings $R$ having a specified
splitting type $\sigma$ at $p$.
Then by the same argument as in the proof of
Theorem~1, we have
\[\lim_{X\rightarrow\infty}\frac{N(U_p(\sigma)\cap V^{(i)};X)}{X}
  = \frac{\zeta(2)^2\zeta(3)^2\zeta(4)^2\zeta(5)}{2n_i}
\mu_p(U_p(\sigma))\prod_{q\neq p} \mu_q(\mathcal U_q) .
\] 
On the other hand, Lemma 20 of \cite{Bhargava4} gives the $p$-adic
densities of $U_p(\sigma)$ for all splitting and ramification types
$\sigma$; in particular, the values of $\mu_p(U_p(\sigma))$ for
$\sigma = (11111)$, $(1112)$, $(113)$, $(14)$, $(5)$, $(122)$, or
$(23)$ are seen to occur
in the ratio $1\!:\!10\!:\!20\!:\!30\!:\!24\!:\!15\!:\!20$ for any
value of $p$; this is the desired result.  {$\Box$ \vspace{2 ex}}

\noindent {\bf Proof of Theorem 4:} This follows immediately from
Theorem~1, Lemma~\ref{hard}, and Lemma~\ref{3reducible2}.


\section*{Acknowledgments}
 

I am very grateful to B.\ Gross, H.\ W.\ Lenstra, P.\ Sarnak, A.\
Shankar, A.\ Wiles, and M.\ Wood for many helpful discussions during
this work.
I am also very thankful to the Packard Foundation for their kind
support of this project.


\end{document}